\newtheoremstyle{theorem}	
   {}       			
   {}      				
   {\itshape}  				
   {\parindent} 			
   {\bfseries} 				
   {}         				
   {.7em}      				
   {}          				
\newtheoremstyle{definition}
  {}       				
  {}      				
  {} 		 				
  {\parindent} 				
  {\bfseries} 				
  {}         				
  {.7em}      				
  {}          				
\newtheoremstyle{remark}
  {}       			
  {}      			
  {} 					
  {\parindent} 			
  {\itshape} 			
  {.}         			
  {.7em}      			
  {}          			
\theoremstyle{theorem}
\newtheorem{theorem}{Theorem}[section]
\newtheorem{corollary}[theorem]{Corollary}
\newtheorem{lemma}[theorem]{Lemma}
\newtheorem{proposition}[theorem]{Proposition}
\theoremstyle{remark}
\newtheorem{remark}[theorem]{Remark}
\theoremstyle{definition}
\newtheorem{definition}[theorem]{Definition}
\newcommand{\Rbb}{\mathbb{R}}
\newcommand{\Nbb}{\mathbb{N}}
\newcommand{\Pbb}{\mathbb{P}}
\newcommand{\Fcal}{\mathcal{F}}
\newcommand{\Ecal}{\mathcal{E}}
\newcommand{\Ebb}{\mathbb{E}}
\newcommand{\Ccal}{\mathcal{C}}
\newcommand{\Fbb}{\mathbb{F}}
\newcommand{\Scal}{\mathcal{S}}
\newcommand{\Qbb}{\mathbb{Q}}
\newcommand{\btilde}{\tilde{b}}
\newcommand{\bhat}{\hat{b}}
\newcommand{\Wcal}{\mathcal{W}}
\newcommand{\Pcal}{\mathcal{P}}
\newcommand{\Bcal}{\mathcal{B}}
\newcommand{\Lip}{\text{Lip}}
\newcommand{\Kcal}{\mathcal{K}}
\newcommand{\Acal}{\mathcal{A}}
\newcommand{\Gcal}{\mathcal{G}}
\newcommand{\Gbb}{\mathbb{G}}
\newcommand{\bfr}{\mathfrak{b}}
\DeclareMathOperator*{\esssup}{ess\,sup}
\definecolor{darkgreen}{rgb}{0, .5, 0}
\definecolor{darkred}{rgb}{.5, 0, 0}
\begin{document}


\title[Strong Solutions of MFSDEs with Irregular Drift]{Strong Solutions of Mean-Field Stochastic Differential Equations with Irregular Drift}
\author[M.Bauer]{Martin Bauer}
\address{M. Bauer: Department of Mathematics, LMU, Theresienstr. 39, D-80333 Munich, Germany.}
\email{bauer@math.lmu.de} 
\author[T. Meyer-Brandis]{Thilo Meyer-Brandis}
\address{T. Meyer-Brandis: Department of Mathematics, LMU, Theresienstr. 39, D-80333 Munich, Germany.}
\email{meyerbra@math.lmu.de} 
\author[F. Proske]{Frank Proske}
\address{F. Proske: CMA, Department of Mathematics, University of Oslo, Moltke Moes vei 35, P.O. Box 1053 Blindern, 0316 Oslo, Norway.}
\email{proske@math.uio.no} 
\date{\today}
\maketitle


\textbf{Abstract.} We investigate existence and uniqueness of strong solutions of mean-field stochastic differential equations with irregular drift coefficients. Our direct construction of strong solutions is mainly based on a compactness criterion employing Malliavin Calculus together with some local time calculus. Furthermore, we establish regularity properties of the solutions such as Malliavin differentiablility as well as Sobolev differentiability in the initial condition. Using this properties we formulate an extension of the Bismut-Elworthy-Li formula to mean-field stochastic differential equations to get a probabilistic representation of the first order derivative of an expectation functional with respect to the initial condition.\\[0.5cm]
\textbf{Keywords.} ~mean-field stochastic differential equation $\cdot$ McKean-Vlasov equation $\cdot$ strong solutions $\cdot$ irregular coefficients $\cdot$ Malliavin calculus $\cdot$ local-time integral  $\cdot$ Sobolev differentiability in the initial condition $\cdot$ Bismut-Elworthy-Li formula

\section{Introduction}

Throughout this paper, let $T>0$ be a given time horizon. 
Mean-field stochastic differential equations (hereafter mean-field SDE), also referred to as McKean-Vlasov equations, given by
\begin{align}\label{eq:MFSDEgeneral}
	dX_t^x = b(t,X_t^x,\Pbb_{X_t^x}) dt + \sigma(t,X_t^x,\Pbb_{X_t^x}) dB_t, \quad X_0^x = x \in \Rbb^d, \quad t\in [0,T],
\end{align}
are an extension of stochastic differential equations where the coefficients are allowed to depend on the law of the solution in addition to the dependence on the solution itself. Here $b: \Rbb^+ \times \Rbb^d \times \Pcal_1(\Rbb^d) \to \Rbb^d$ and $\sigma: \Rbb^+ \times \Rbb^d \times \Pcal_1(\Rbb^d) \to \Rbb^{d\times n}$ are some given drift and volatility coefficients, $(B_t)_{t\in[0,T]}$ is an $n$-dimensional Brownian motion,
\begin{align*}
		\Pcal_1(\Rbb^d) := \left\lbrace \mu \left| \mu \text{ probability measure on } (\Rbb^d, \Bcal(\Rbb^d)) \text{ with } \int_{\Rbb^d} |x| d\mu(x) < \infty \right.\right\rbrace
	\end{align*}
is the space of probability measures over $(\Rbb^d,\Bcal(\Rbb^d))$ with existing first moment, and $\Pbb_{X_t^x}$ is the law of $X_t^x$ with respect to the underlying probability measure $\Pbb$. Based on the works of Vlasov \cite{Vlasov_VibrationalPropertiesofElectronGas}, Kac \cite{Kac_FoundationsOfKineticTheory} and McKean \cite{McKean_AClassOfMarkovProcess}, mean-field SDEs arised from Boltzmann's equation in physics, 
	which is used to model weak interaction between particles in a multi-particle system. 
Since then the study of mean-field SDEs has evolved as an active research field with numerous applications. Various extensions of the class of mean-field SDEs as for example replacing the driving noise by a Lévy process or considering backward equations have been examined e.g.~in \cite{JourdainMeleardWojbor_NonlinearSDEs}, \cite{BuckdahnDjehicheLiPeng_MFBSDELimitApproach}, \cite{BuckdahnLiPeng_MFBSDEandRelatedPDEs}, and \cite{BuckdahnLiPengRainer_MFSDEandAssociatedPDE}. With their work on mean-field games in \cite{LasryLions_MeanFieldGames}, Lasry and Lions have set a cornerstone in the application of mean-field SDEs in Economics and Finance, see also \cite{Cardaliaguet_NotesOnMeanFieldGames} for a readily accessible summary of Lions' lectures at Collège de France. As opposed to the analytic approach taken in \cite{LasryLions_MeanFieldGames}, Carmona and Delarue developed a probabilistic approach to mean-field games, see e.g. \cite{CarmonaDelarue_ProbabilisticAnalysisofMFG}, \cite{CarmonaDelarue_MasterEquation}, \cite{CarmonaDelarue_FBSDEsandControlledMKV}, \cite{CarmonaDelarueLachapelle_ControlofMKVvsMFG} and \cite{CarmonaLacker_ProbabilisticWeakFormulationofMFGandApplications}. More recently, the mean-field approach also found application in systemic risk modeling, especially in models for inter-bank lending and borrowing, see e.g. \cite{CarmonaFouqueMousaviSun_SystemicRiskandStochasticGameswithDelay}, \cite{CarmonaFouqueSun_MFGandSystemicRisk}, \cite{FouqueIchiba_StabilityinaModelofInterbankLending}, \cite{FouqueSun_SystemicRiskIllustrated}, \cite{GarnierPapanicolaouYang_LargeDeviationsforMFModelofSystemicRisk}, \cite{KleyKlueppelbergReichel_SystemicRiskTroughContagioninaCorePeripheryStructuredBankingNetwork}, and the cited sources therein.\par
	
In this paper we study existence, uniqueness and regularity properties of (strong) solutions of one-dimensional mean-field SDEs of the type 
	\begin{align}\label{introdMFSDE}
		dX_t^x = b(t,X_t^x, \Pbb_{X_t^x}) dt + dB_t, \quad X_0^x = x \in \Rbb, \quad t\in[0,T].
	\end{align}
If the drift coefficient $b$ is of at most linear growth and Lipschitz continuous, existence and uniquenss of (strong) solutions of \eqref{introdMFSDE} are well understood. Under further smoothness assumptions on $b$, differentiability in the initial condition $x$ and the relation to non-linear PDE's is studied in \cite{BuckdahnLiPengRainer_MFSDEandAssociatedPDE}. We here consider the situation when the drift $b$ is allowed to be irregular. More precisely, in addition to some linear growth condition we basically only require measurability in the second variable and some continuity in the third variable.

The first main contribution of this paper is to establish existence and uniqueness of strong solutions of mean-field SDE \eqref{introdMFSDE} under such irregularity assumptions on $b$. To this end, we firstly consider existence and uniqueness of weak solutions of mean-field SDE \eqref{introdMFSDE}. In \cite{Chiang_MKVWithDiscontinuousCoefficients}, Chiang proves the existence of weak solutions for time-homogeneous mean-field SDEs with drift coefficients that are of linear growth and allow for certain discontinuities. 
In the time-inhomogeneous case, Mishura and Veretennikov ensure in \cite{MishuraVeretennikov_SolutionsOfMKV} the existence of weak solutions by requiring in addition to linear growth that the drift is of the form 
\begin{equation}\label{eq:DriftVer}
b(t,y,\mu) = \int \overline{b}(t,y,z) \mu(dz),
\end{equation} 
for some $\overline{b}: [0,T] \times \Rbb \times \Rbb \rightarrow \Rbb$. In \cite{LiMin_WeakSolutions}, Li and Min show the existence of weak solutions of mean-field SDEs with path-dependent coefficients, supposing that the drift is bounded and continuous in the third variable. We here relax the boundedness requirement in \cite{LiMin_WeakSolutions} (for the non-path-dependent case) and show existence of a weak solution of  \eqref{introdMFSDE} by merely requiring that $b$ is continuous in the third variable, i.e. for all $\mu \in \Pcal_1(\Rbb)$ and all $\varepsilon>0$ exists a $\delta>0$ such that
	\begin{align}\label{continuityThirdVariable}
		\left( \forall \nu\in\Pcal_1(\Rbb): \Kcal(\mu,\nu)<\delta \right) \Rightarrow |b(t,y,\mu)-b(t,y,\nu)| < \varepsilon, \quad t\in[0,T], \quad y\in\Rbb,
	\end{align}
	and of at most linear growth, i.e. there exists a constant $C>0$ such that for all $t\in[0,T]$, $y\in \Rbb$ and $\mu \in \Pcal_1(\Rbb)$,
	\begin{align}\label{linearGrowth} 
		\vert b(t,y,\mu) \vert &\leq C(1+|y|+\Kcal(\mu,\delta_0)).
	\end{align}
	Here $\delta_0$ is the Dirac-measure in 0 and $\Kcal$ the Kantorovich metric:
	\begin{align*}
		\Kcal(\lambda,\nu) &:= \sup_{h\in\Lip_1(\Rbb)} \left\vert \int_{\Rbb} h(x) (\lambda-\nu)(dx) \right\vert, \quad \lambda,\nu \in \Pcal_1(\Rbb),
	\end{align*}
	where $\Lip_1(\Rbb)$ is the space of Lipschitz continuous functions with Lipschitz constant 1 (for an explicit definition see the notations below).	 
Further we show that if $b$ admits a modulus of continuity in the third variable (see \Cref{modulusOfContinuity}) in addition to \eqref{continuityThirdVariable} and  \eqref{linearGrowth}, then there is weak uniqueness (or uniqueness in law) of solutions of \eqref{introdMFSDE}.


In order to establish the existence of strong solutions of \eqref{introdMFSDE}, we then show that any weak solution actually is a strong solution. Indeed, given a weak solution $X^x$ (and in particular its law) of mean-field SDE \eqref{introdMFSDE}, one can re-interprete $X$ as the solution of a common SDE
	\begin{align}\label{commonSDE}
		dX_t^x = b^{\Pbb_{X}}(t,X_t^x) dt + dB_t, \quad X_0^x = x \in \Rbb, \quad t\in [0,T],
	\end{align}
where $b^{\Pbb_{X}}(t,y) := b(t,y, \Pbb_{X_t^x})$. This re-interpretation allows to apply the ideas and techniques developed in \cite{MeyerBrandisBanosDuedahlProske_ComputingDeltas},\cite{MenoukeuMeyerBrandisNilssenProskeZhang_VariationalApproachToTheConstructionOfStrongSolutions} and \cite{MeyerBrandisProske_ConstructionOfStrongSolutionsOfSDEs} on strong solutions of SDEs with irregular coefficients to equation \eqref{commonSDE}. In order to deploy these results and to prove that the weak solution $X^x$ is indeed a strong solution, we still assume condition \eqref{continuityThirdVariable}, i.e.~the drift coefficient $b$ is supposed to be continuous in the third variable, but require the following particular form proposed in  \cite{MeyerBrandisBanosDuedahlProske_ComputingDeltas} of the linear growth condition \eqref{linearGrowth}:
	\begin{align}\label{formDrift}
		b(t,y,\mu) = \bhat(t,y,\mu) + \btilde(t,y,\mu),
	\end{align}
	where $\bhat$ is merely measurable and bounded and $\btilde$ is of at most linear growth \eqref{linearGrowth} and Lipschitz continuous in the second variable, i.e. there exists a constant $C>0$ such that for all $t\in[0,T]$, $y_1,y_2 \in \Rbb$ and $\mu \in \Pcal_1(\Rbb)$,
	\begin{align}\label{lipschitzContinuous}
		\vert  \btilde(t,y_1,\mu) -  \btilde (t,y_2,\mu) \vert &\leq C|y_1-y_2|.
	\end{align}
We remark that while a typical approach to show existence of strong solutions is to establish existence of weak solutions together with pathwise uniqueness (Yamada-Watanabe Theorem), in \cite{MeyerBrandisBanosDuedahlProske_ComputingDeltas},\cite{MenoukeuMeyerBrandisNilssenProskeZhang_VariationalApproachToTheConstructionOfStrongSolutions} and \cite{MeyerBrandisProske_ConstructionOfStrongSolutionsOfSDEs} the existence of strong solutions is shown by a direct constructive approach based on some compactness criterion employing Malliavin calcuclus. Further, pathwise (or strong) uniqueness is then a consequence of weak uniqueness. We also remark that in \cite{MishuraVeretennikov_SolutionsOfMKV} the existence of strong solutions of mean-field SDEs is shown in the case that the drift is of the special form \eqref{eq:DriftVer} where $\overline{b}$ fulfills certain linear growth and Lipschitz conditions.

The second contribution of this paper is the study of certain regularity properties of strong solutions of mean-field equation \eqref{introdMFSDE}. Firstly, from the constructive approach to strong solutions based on \cite{MeyerBrandisBanosDuedahlProske_ComputingDeltas}, \cite{MenoukeuMeyerBrandisNilssenProskeZhang_VariationalApproachToTheConstructionOfStrongSolutions} and \cite{MeyerBrandisProske_ConstructionOfStrongSolutionsOfSDEs} we directly gain Malliavin differentiability of strong solutions of SDE~\eqref{commonSDE}, i.e.~Malliavin differentiability of strong solutions of mean-field SDE \eqref{introdMFSDE}. 
Similar to \cite{MeyerBrandisBanosDuedahlProske_ComputingDeltas} we provide a probabilistic representation of the Malliavin derivative using the local time-space integral introduced in \cite{Eisenbaum_LocalTimeIntegral}.

Secondly, we investigate the regularity of the dependence of a solution $X^x$ on its initial condition x. For the special case where the mean-field dependence is given via an expectation functional of the form
\begin{align}\label{specialMFSDE}
		dX_t^x = \overline{b}(t,X_t^x, \Ebb[\varphi(X_t^x)]) dt + dB_t, \quad X_0^x = x \in \Rbb, \quad t\in[0,T],
\end{align}
for some $\overline{b}: [0,T] \times \Rbb \times \Rbb \rightarrow \Rbb$, continuous differentiability of $X^x$ with respect to $x$ can be deduced from \cite{BuckdahnLiPengRainer_MFSDEandAssociatedPDE} under the assumption that $\overline{b}$ and $\varphi:\Rbb \to \Rbb$ are continuously differentiable with bounded Lipschitz derivatives. We here establish weak (Sobolev) differentiability of $X^x$ with respect to $x$ for the general drift $b$ given in~\eqref{introdMFSDE} by assuming in addition to \eqref{formDrift} that $\mu \mapsto b(t,y,\mu)$ is Lipschitz continuous uniformly in $t\in[0,T]$ and $y \in \Rbb$, i.e.~there exists a constant $C>0$ such that for all $t\in[0,T]$, $y \in \Rbb$ and $\mu, \nu \in \Pcal_1(\Rbb)$
\begin{align}\label{eq:uniformLipschitzThird}
	\vert b(t,y,\mu) - b(t,y,\nu)\vert \leq C \Kcal(\mu,\nu).
\end{align}
Further, also for the Sobolev derivative we provide a probabilistic representation in terms of local-time space integration.

The third main contribution of this paper is a Bismut-Elworthy-Li formula for first order derivatives of expectation functionals $\Ebb[\Phi(X_T^x)]$, ${\Phi: \Rbb \to \Rbb}$, of a strong solution $X^x$ of mean-field SDE \eqref{introdMFSDE}. 
Assuming the drift $b$ is in the form \eqref{formDrift} and fulfills the Lipschitz condition \eqref{eq:uniformLipschitzThird}, 
we first show Sobolev differentiability of these expectation functionals whenever $\Phi$ is continuously differentiable with bounded Lipschitz derivative. We then continue to develop a Bismut-Elworthy-Li type formula, that is we give a probabilistic representation for the first-order derivative of the form
	\begin{align}\label{BismutGeneral}
		\frac{\partial}{\partial x} \Ebb[\Phi(X_T^x)] = \Ebb\left[\Phi(X_T^x) \int_0^T \theta(t,X_t^x) dB_t\right],
	\end{align}
	where $\theta:[0,T] \times \Rbb \to \Rbb$ is some function independent of $\Phi$. This extends the result in \cite{Banos_Bismut}, where the author proves a Bismut-Elworthy-Li type formula for drift coefficients that are continuously differentiable in the space and law variable with bounded Lipschitz derivatives. We remark that compared to \cite{Banos_Bismut}, in addition to deal with irregular drift coefficients we are able to determine the so-called Malliavin weight $\int_0^T \theta(t,X_t^x) dB_t$ in terms of an It\^{o} integral and not in terms of an anticipative Skorohod integral.

Finally, we remark that in \cite{Bauer_RegularityOfMFSDE} we study (strong) solutions of mean-field SDEs and a corresponding Bismut-Elworthy-Li formula where the dependence of the drift $b$ on the solution law $\Pbb_{X_t^x}$ in \eqref{introdMFSDE} is of the special form
	\begin{align}\label{eq:specialMFSDEGeneral}
		dX_t^x = \overline{b}\left(t,X_t^x, \int_\Rbb \varphi(t,X_t^x,z) \Pbb_{X_t^x}(dz) \right) dt + dB_t, \quad X_0^x = x \in \Rbb, 
	\end{align}
	for some $\overline{b},\varphi:[0,T] \times \Rbb \times \Rbb\rightarrow \Rbb$. For this special class of mean-field SDEs, which includes the two popular drift families given in \eqref{specialMFSDE} and \eqref{eq:DriftVer}, we allow for irregularity of $\overline{b}$ and $\varphi$ that is not covered by our assumptions on $b$ in this paper. For example, for the indicator function $\varphi(t,x,z) = I_{z\le u}$ we are able to deal in \cite{Bauer_RegularityOfMFSDE} with the important case where the drift $\overline{b}\left(t,X_t^x,F_{X_t^x}(u)\right)$ depends on the distribution function $F_{X_t^x}(\cdot)$ of the solution. 
	
%

The remaining paper is organized as follows. In the second section we deal with existence and uniqueness of solutions of the mean-field SDE \eqref{introdMFSDE}. The third section investigates the afore-mentioned regularity properties of strong solutions. Finally, a proof of weak differentiability of expectation functionals $\Ebb[\Phi(X_T^x)]$ is given in the fourth section together with a Bismut-Elworthy-Li formula.\\[0.5cm]
\textbf{Notation:} Subsequently we list some of the most frequently used notations. For this, let $(\mathcal{X},d_{\mathcal{X}})$ and $(\mathcal{Y},d_{\mathcal{Y}})$ be two metric spaces.
\begin{itemize}
\item $\Ccal(\mathcal{X};\mathcal{Y})$ denotes the space of continuous functions $f:\mathcal{X} \to \mathcal{Y}$.
\item $\Ccal_0^{\infty}(U)$, $U\subseteq \Rbb$, denotes the space of smooth functions $f: U \to \Rbb$ with compact support.
\item For every $C>0$ we define the space $\Lip_C(\mathcal{X},\mathcal{Y})$ of functions $f:\mathcal{X}\to \mathcal{Y}$ such that
\begin{align*}
	d_{\mathcal{Y}}(f(x_1),f(x_2)) \leq C d_{\mathcal{X}}(x_1,x_2), \quad \forall x_1,x_2 \in \mathcal{X}
\end{align*}
as the space of Lipschitz functions with Lipschitz constant $C>0$. Furthermore, we define $\Lip(\mathcal{X},\mathcal{Y}) := \bigcup_{C>0} \Lip_C(\mathcal{X},\mathcal{Y})$ and denote by $\Lip_C(\mathcal{X}):= \Lip_C(\mathcal{X},\mathcal{X})$ and $\Lip(\mathcal{X}) := \Lip(\mathcal{X},\mathcal{X})$, respectively, the space of Lipschitz functions mapping from $\mathcal{X}$ to $\mathcal{X}$.
\item $\Ccal^{1,1}_{b,C}(\Rbb)$ denotes the space of continuously differentiable functions $f: \Rbb \to \Rbb$ with Lipschitz continuous and by $C>0$ bounded derivative $f'$, i.e.
	\begin{enumerate}[(a)]
		\item $\sup_{y\in \Rbb} |f'(y)| \leq C$, and
		\item $(y \mapsto f'(y)) \in \Lip_C(\Rbb)$.
	\end{enumerate}
	We define $\Ccal^{1,1}_b(\Rbb) := \bigcup_{C>0} \Ccal^{1,1}_{b,C}(\Rbb)$.
\item $\Ccal_b^{1,L}(\Rbb\times \Pcal_1(\Rbb))$ is the space of functions $f:[0,T] \times \Rbb \times \Pcal_1(\Rbb) \to \Rbb$ such that there exists a constant $C>0$ with
	\begin{enumerate}[(a)]
		\item $(y \mapsto f(t,y,\mu)) \in \Ccal_{b,C}^{1,1}(\Rbb)$ for all $t\in[0,T]$ and $\mu \in \Pcal_1(\Rbb)$, and
		\item $(\mu \mapsto f(t,y,\mu)) \in \Lip_C(\Pcal_1(\Rbb),\Rbb)$ for all $t\in[0,T]$ and $y\in \Rbb$.
	\end{enumerate}
\item Let $(\Omega, \Fcal, \Fbb, \Pbb)$ be a generic complete filtered probability space with filtration $\Fbb = (\Fcal_t)_{t\in[0,T]}$ and $B = (B_t)_{t\in[0,T]}$ be a Brownian motion defined on this probability space. Furthermore, we write $\Ebb[\cdot] := \Ebb_{\Pbb}[\cdot]$, if not mentioned differently.
\item $L^p(\Scal)$ denotes the Banach space of functions on the measurable space $(\Scal,\Gcal)$ integrable to some power $p$, $p\geq 1$.
\item $L^p(\Omega,\Fcal_t)$ denotes the space of $\Fcal_t$ measurable functions in $L^p(\Omega)$.
\item Let $f: \Rbb \to \Rbb$ be a (weakly) differentiable function. Then we denote by $\partial_y f(y):= \frac{\partial f}{\partial y} (y)$ its first (weak) derivative evaluated at $y \in \Rbb$.
\item We denote the Doléan-Dade exponential for a progressive process $Y$ with respect to the corresponding Brownian integral if well-defined for $t\in[0,T]$ by $$\Ecal\left( \int_0^t Y_u dB_u \right) := \exp \left\lbrace \int_0^t Y_u dB_u - \frac{1}{2} \int_0^t |Y_u|^2du \right\rbrace.$$
\item We define $B_t^x := x + B_t$, $t\in [0,T]$, for any Brownian motion $B$. 
\item For any normed space $\mathcal{X}$ we denote its corresponding norm by $\Vert \cdot \Vert_{\mathcal{X}}$; the Euclidean norm is denoted by $|\cdot |$.
\item We write $E_1(\theta) \lesssim E_2(\theta)$ for two mathematical expressions $E_1(\theta),E_2(\theta)$ depending on some parameter $\theta$, if there exists a constant $C>0$ not depending on $\theta$ such that $E_1(\theta) \leq C E_2(\theta)$.
\item We denote by $L^{X}$ the local time of the stochastic process $X$ and furthermore by $\int_s^t \int_{\Rbb} b(u,y) L^X(du, dy)$ for suitable $b$ the local-time space integral as introduced in \cite{Eisenbaum_LocalTimeIntegral} and extended in \cite{MeyerBrandisBanosDuedahlProske_ComputingDeltas}.
\item We denote the Wiener transform of some $Z \in L^2(\Omega,\Fcal_T)$ in $f \in L^2([0,T])$ by
\begin{align*}
	\Wcal(Z)(f) := \Ebb \left[ Z \Ecal\left(\int_0^T f(s) dB_s \right) \right].
\end{align*}
\end{itemize}

\section{Existence and Uniqueness of Solutions }\label{SectionStrongSolution}
The main objective of this section is to investigate existence and uniqueness of strong solutions of the one-dimensional mean-field SDE
\begin{align}\label{mainMcKeanVlasov}
	dX_t^x = b(t,X_t^x,\Pbb_{X_t^x}) dt + dB_t, \quad X_0^x = x \in \Rbb, \quad t\in [0,T],
\end{align}
with irregular drift coefficient $b: \Rbb^+ \times \Rbb \times \Pcal_1(\Rbb) \to \Rbb$. We first consider existence and uniqueness of weak solutions of \eqref{mainMcKeanVlasov} in \Cref{SSWeak}, which consecutively is employed together with results from \cite{MeyerBrandisBanosDuedahlProske_ComputingDeltas} to study strong solutions of \eqref{mainMcKeanVlasov} in \Cref{SSStrong}.

\subsection{Existence and Uniqueness of Weak Solutions} \label{SSWeak}

We recall the definition of weak solutions.

\begin{definition}
	A \emph{weak solution} of the mean-field SDE \eqref{mainMcKeanVlasov} is a six-tuple \linebreak${(\Omega, \Fcal, \Fbb, \Pbb, B, X^x)}$ such that
	\begin{enumerate}[(i)]
	\item $(\Omega, \Fcal, \Pbb)$ is a complete probability space and $\Fbb = \lbrace \Fcal_t \rbrace_{t\in [0,T]}$ is a filtration on $(\Omega, \Fcal, \Pbb)$ satisfying the usual conditions of right-continuity and completeness,
	\item $X^x = (X^x_t)_{t\in [0,T]}$ is a continuous, $\Fbb$-adapted, $\Rbb$-valued process; $B=(B_t)_{t\in [0,T]}$ is a one-dimensional $(\Fbb, \Pbb)$-Brownian motion,
	\item $X^x$ satisfies $\Pbb$-a.s.
	\begin{align*}
		dX^x_t = b(t,X^x_t,\Pbb_{X^x_t}) dt + d B_t, \quad X^x_0 = x \in \Rbb, \quad t\in[0,T],
	\end{align*}
	where for all $t\in[0,T]$, $\Pbb_{X^x_t} \in \Pcal_1(\Rbb)$ denotes the law of $X^x_t$ with respect to $\Pbb$.
	\end{enumerate}
\end{definition}
\begin{remark}
If there is no ambiguity about the stochastic basis $(\Omega, \Fcal, \Fbb, \Pbb, B)$ we also refer solely to the process $X^x$ as weak solution (or later on as strong solution) for notational convenience.
\end{remark}

In a first step we employ Girsanov's theorem in a well-known way to construct weak solutions of certain stochastic differential equations (hereafter SDE) associated to our mean-field SDE \eqref{mainMcKeanVlasov}. Assume the drift coefficient $b: [0,T] \times \Rbb \times \Pcal_1(\Rbb) \to \Rbb$ satisfies the linear growth condition \eqref{linearGrowth}. For a given $\mu \in \Ccal([0,T];\Pcal_1(\Rbb))$ we then define $b^{\mu}: \Rbb^+ \times \Rbb \to \Rbb$ by $b^{\mu}(t,y) := b(t,y,\mu_t)$ and consider the SDE
 \begin{align}\label{helpMcKeanVlasov}
	 	dX_t^{x} = b^{\mu}(t,X_t^{x})dt + dB_t, \quad X_0^{x} = x \in \Rbb, \quad t\in[0,T].
\end{align}
Let $\tilde{B}$ be a one-dimensional Brownian motion on a suitable filtered probability space $(\Omega,\Fcal,\Fbb,\Qbb)$. Define $X_t^x := \tilde{B}_t + x$. By \Cref{equivalentBrownianMeasurePbb}, the density $\frac{d\Pbb^{\mu}}{d\Qbb} = \Ecal\left( \int_0^T b^\mu(t,\tilde{B}_t^x) d\tilde{B}_t \right)$ gives rise to a well-defined equivalent probability measure $\Pbb^{\mu}$, and by Girsanov's theorem $B^{\mu}_t := X_t^x  - x - \int_0^t b^\mu(s,X_s^{x,\mu} )ds$, $t\in[0,T] $, defines an $(\Fbb,\Pbb^{\mu})$-Brownian motion. Hence, $(\Omega, \Fcal, \Fbb, \Pbb^{\mu}, B^{\mu}, X_t^x)$ is a weak solution of SDE \eqref{helpMcKeanVlasov}.

To show existence of weak solutions of the mean-field SDE \eqref{mainMcKeanVlasov} we proceed by employing the weak solutions of the auxiliary SDEs in \eqref{helpMcKeanVlasov} together with a fixed point argument. The upcoming theorem is a modified version of Theorem 3.2 in \cite{LiMin_WeakSolutions} for non-path-dependent coefficients, where we extend the assumptions on the drift from boundedness to linear growth. 

\begin{theorem}\label{existenceWeakSolutionMFSDE}
	Let the drift coefficient $b: [0,T] \times \Rbb \times \Pcal_1(\Rbb) \to \Rbb$ be a measurable function that satisfies conditions \eqref{linearGrowth} and \eqref{continuityThirdVariable}, i.e.~$b$ is of at most linear growth and continuous in the third variable. Then there exists a weak solution of the mean-field SDE \eqref{mainMcKeanVlasov}.
	Furthermore, ${\Pbb_{X_{\cdot}^x} \in \Ccal([0,T];\Pcal_1(\Rbb))}$ for any weak solution $ X^x$ of \eqref{mainMcKeanVlasov}.
\end{theorem}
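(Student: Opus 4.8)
The plan is to obtain existence via a Schauder fixed point argument on the space $\Ccal([0,T];\Pcal_1(\Rbb))$ of continuous flows of measures, following the scheme of Theorem~3.2 in \cite{LiMin_WeakSolutions} but upgrading the boundedness of $b$ used there to the linear growth condition \eqref{linearGrowth}. Keep the reference basis $(\Omega,\Fcal,\Fbb,\Qbb,\tilde{B})$ fixed, set $X^x_\cdot := x+\tilde{B}_\cdot$ as in the paragraph preceding the theorem, and for $\mu\in\Ccal([0,T];\Pcal_1(\Rbb))$ let $\Pbb^\mu$ be the measure with density $\Ecal(\int_0^T b^\mu(t,\tilde{B}_t^x)\,d\tilde{B}_t)$, which by \Cref{equivalentBrownianMeasurePbb} is well defined, equivalent to $\Qbb$, and turns $X^x$ into a weak solution of \eqref{helpMcKeanVlasov}. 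Define $\Phi(\mu):=(\Pbb^\mu_{X_t^x})_{t\in[0,T]}$, the flow of marginal laws of $X^x$ under $\Pbb^\mu$. Since a fixed point $\mu^\ast$ of $\Phi$ satisfies $b^{\mu^\ast}(t,y)=b(t,y,\Pbb^{\mu^\ast}_{X_t^x})$ and hence makes $(\Omega,\Fcal,\Fbb,\Pbb^{\mu^\ast},B^{\mu^\ast},X^x)$ a weak solution of \eqref{mainMcKeanVlasov} (with $B^{\mu^\ast}$ the Girsanov Brownian motion), it suffices to realise $\Phi$ as a continuous self-map of a convex compact subset.

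For the \emph{invariant set} I would first use the Girsanov change of measure together with H\"older's inequality and an $L^q(\Qbb)$-bound on the density (see the obstacle below) to obtain $\sup_{t\le T}\Ebb_{\Pbb^\mu}[|X_t^x|^{1+\varepsilon}]\le\kappa$ for some $\varepsilon>0$, with $\kappa$ depending only on $|x|$, $T$, $C$ and on $R:=\sup_s\Kcal(\mu_s,\delta_0)$; this lets one fix $R,R'$ large so that the closed convex set of flows with $\sup_s\Kcal(\mu_s,\delta_0)\le R$ and $\sup_s\int_\Rbb|z|^{1+\varepsilon}\mu_s(dz)\le R'$ is mapped into itself by $\Phi$. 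The bound $\Kcal(\Pbb^\mu_{X_t^x},\Pbb^\mu_{X_s^x})\le\Ebb_{\Pbb^\mu}[|X_t^x-X_s^x|]\le\Ebb_{\Pbb^\mu}[|\tilde{B}_t-\tilde{B}_s|]+\Ebb_{\Pbb^\mu}[\int_s^t|b^\mu(u,X_u^x)|\,du]\lesssim|t-s|^{1/2}+(1+R+\kappa^{1/(1+\varepsilon)})|t-s|$ gives a modulus of continuity independent of $\mu$; intersecting the invariant set with the flows having this common modulus (still convex) and applying Arzel\`a--Ascoli for maps into the Polish space $(\Pcal_1(\Rbb),\Kcal)$ — the uniform $(1+\varepsilon)$-moment making each time slice uniformly integrable, hence tight and relatively $\Kcal$-compact — yields a convex compact subset of $\Ccal([0,T];\Pcal_1(\Rbb))$ that $\Phi$ maps into itself.

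For the \emph{continuity} of $\Phi$: if $\mu^n\to\mu$ then \eqref{continuityThirdVariable} gives $b^{\mu^n}(t,y)\to b^\mu(t,y)$ for every $(t,y)$, and since $|b^{\mu^n}(t,y)|\le C(1+|y|+R)$ dominated convergence (It\^o isometry for the stochastic part, dominated convergence for the bracket) shows $\int_0^T b^{\mu^n}(t,\tilde{B}_t^x)\,d\tilde{B}_t\to\int_0^T b^\mu(t,\tilde{B}_t^x)\,d\tilde{B}_t$ in $L^2(\Qbb)$; the densities thus converge in $\Qbb$-probability, are uniformly $L^q(\Qbb)$-bounded, hence converge in $L^1(\Qbb)$. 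Together with the uniform $(1+\varepsilon)$-moment bound this gives $\Ebb_{\Pbb^{\mu^n}}[h(X_t^x)]\to\Ebb_{\Pbb^\mu}[h(X_t^x)]$ uniformly in $t$ and uniformly over $h\in\Lip_1(\Rbb)$ with $h(0)=0$ (truncate $h$ at level $N$, bound the tail by the $(1+\varepsilon)$-moment, let $N\to\infty$), i.e.\ $\Phi(\mu^n)\to\Phi(\mu)$ in $\Ccal([0,T];\Pcal_1(\Rbb))$; Schauder's theorem then yields the desired $\mu^\ast$. For the final assertion, given an arbitrary weak solution $X^x$, a localisation-and-Gr\"onwall argument on $\Ebb[|X_{t\wedge\tau_n}^x|]$ with $\tau_n=\inf\{t:|X_t^x|\ge n\}$, using \eqref{linearGrowth}, $\Kcal(\Pbb_{X_s^x},\delta_0)\le\Ebb[|X_s^x|]$, path continuity and Fatou, gives $\sup_{t\le T}\Ebb[|X_t^x|]<\infty$; then $\Kcal(\Pbb_{X_t^x},\Pbb_{X_s^x})\le\Ebb[|X_t^x-X_s^x|]\le\Ebb[|B_t-B_s|]+\Ebb[\int_s^t|b(u,X_u^x,\Pbb_{X_u^x})|\,du]\lesssim|t-s|^{1/2}+(1+\sup_u\Ebb[|X_u^x|])|t-s|$ yields $\Pbb_{X_\cdot^x}\in\Ccal([0,T];\Pcal_1(\Rbb))$.

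I expect the main obstacle to be the $L^q(\Qbb)$-integrability, for some $q>1$ and uniformly over the invariant set, of the Girsanov densities $\Ecal(\int_0^T b^\mu(t,\tilde{B}_t^x)\,d\tilde{B}_t)$ — precisely the point where \cite{LiMin_WeakSolutions} relied on boundedness of $b$. This reduces to a Khasminskii/Novikov-type exponential-moment estimate: by \eqref{linearGrowth} one has $\int_0^T|b^\mu(t,\tilde{B}_t^x)|^2\,dt\le c(T+\int_0^T|\tilde{B}_t^x|^2\,dt)$, whose exponential has finite $\Qbb$-expectation only below an explicit threshold depending on $T$, so one partitions $[0,T]$ into finitely many subintervals on which the threshold is met, bounds the density on each by Cauchy--Schwarz together with Novikov's condition, and patches via the Markov property. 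Once this uniform $L^q(\Qbb)$ control is in hand, all remaining steps are routine applications of H\"older's and Gr\"onwall's inequalities and dominated convergence.
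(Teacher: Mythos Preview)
Your overall architecture—Schauder's fixed point on a convex compact set of measure flows, with the self-map built from Girsanov weak solutions of the frozen-law SDE \eqref{helpMcKeanVlasov}—is exactly the scheme the paper follows, and the items you single out (invariance, equicontinuity in $t$, uniform integrability for $\Kcal$-relative compactness, continuity of the map) are precisely the three points (i)--(iii) the paper verifies on top of \cite{LiMin_WeakSolutions}. Your argument for the final assertion $\Pbb_{X_\cdot^x}\in\Ccal([0,T];\Pcal_1(\Rbb))$ is also in line with the paper.

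The one place where your proposal diverges—and where there is a genuine gap—is the resolution of the obstacle you correctly isolate: the uniform $L^{1+\varepsilon}(\Qbb)$ bound on the Girsanov densities. Your suggested route (partition $[0,T]$, apply Novikov on each piece, patch via the Markov property) works to show the density is a true martingale, but for a $q$-th moment with $q>1$ it does not close: conditioning on $\Fcal_{t_i}$, the bound on $\Ebb\bigl[(Z_{t_{i+1}}/Z_{t_i})^{1+\varepsilon}\,\big|\,\Fcal_{t_i}\bigr]$ involves $\exp\bigl(c\,\varepsilon\,\delta(1+|\tilde B_{t_i}^x|^2)\bigr)$ rather than a constant, because the drift depends on the whole path of $\tilde B^x$; iterating produces a product of such path-dependent factors, and there is no obvious way to control its expectation. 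The paper avoids this entirely by the trick of \Cref{boundMeasureChange}: write
\[
\Ecal\!\left(\int_0^T b^\mu\,d\tilde B\right)^{1+\varepsilon}
=\Ecal\!\left(\int_0^T (1+\varepsilon)b^\mu\,d\tilde B\right)\exp\!\left\{\tfrac{\varepsilon(1+\varepsilon)}{2}\int_0^T (b^\mu)^2\,dt\right\},
\]
change measure once to the weak solution $X^{\varepsilon,x}$ of the SDE with drift $(1+\varepsilon)b^\mu$, and use the Gr\"onwall a priori bound $|X_t^{\varepsilon,x}|\le C_{\varepsilon,\mu}(1+|x|+\sup_s|B_s|)$ so that the remaining factor becomes $\Ebb\bigl[\exp\{c\,\varepsilon(1+\varepsilon)(1+|x|+\sup_s|B_s|)^2\}\bigr]$, finite for $\varepsilon$ small and uniform over the invariant set. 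Once you replace your partitioning sketch by this argument, the rest of your plan goes through essentially as written; in particular, your continuity argument via $L^1$-convergence of densities plus uniform moments is a legitimate variant of the paper's direct estimate using $|e^y-e^z|\le|y-z|(e^y+e^z)$.
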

\begin{proof}
We will state the proof just in the parts that differ from the proof in \cite{LiMin_WeakSolutions}. \\
For $\mu\in\Ccal([0,T];\Pcal_1(\Rbb))$ let $(\Omega, \Fcal, \Fbb, \Pbb^{\mu}, B^{\mu}, X^{x,\mu})$ be a weak solution of SDE \eqref{helpMcKeanVlasov}. We define the mapping $\psi: \Ccal([0,T]; \Pcal_1(\Rbb)) \to \Ccal([0,T]; \Pcal_1(\Rbb))$ by
	\begin{align*}
		\psi_s(\mu) := \Pbb_{X_s^{x,\mu}}^{\mu},
	\end{align*}	 
where $\Pbb_{X_s^{x,\mu}}^{\mu}$ denotes the law of $X_s^{x,\mu}$ under $\Pbb^{\mu}$, $s\in[0,T]$. Note that it can be shown equivalently to (ii) below that $\psi_s(\mu)$ is indeed continuous in $s\in[0,T]$. We need to show that $\psi$ has a fixed point, i.e. $\mu_s = \psi_s(\mu) = \Pbb_{X_s^{x,\mu}}^{\mu}$ for all $s\in[0,T]$. To this end we aim at applying Schauder's fixed point theorem (cf.~\cite{Schauder}) to $\psi:E \to E$, where 
\begin{align*}
	E:= \left\lbrace \mu \in \Ccal([0,T];\Pcal_1(\Rbb)): \Kcal(\mu_t, \delta_x) \leq C,\text{ } \Kcal(\mu_t,\mu_s) \leq C|t-s|^{\frac{1}{2}},\text{ } t,s\in[0,T] \right\rbrace,
\end{align*}
for some suitable constant $C>0$. Therefore we have to show that $E$ is a non-empty convex subset of $\Ccal([0,T];\Pcal_1(\Rbb))$, $\psi$ maps $E$ continuously into $E$ and $\psi(E)$ is compact.  Due to the proof of Theorem 3.2 in \cite{LiMin_WeakSolutions} it is left to show that for all $s,t\in[0,T]$ and $\mu \in E$,
\begin{enumerate}[(i)]
\item $\psi$ is continuous on $E$,
\item $\Kcal(\psi_t(\mu),\psi_s(\mu)) \lesssim \vert t-s \vert^{\frac{1}{2}},$
\item $\Ebb_{\Pbb^{\mu}}[\vert X_t^{\mu,x} \vert \mathbbm{1}_{\lbrace \vert X_t^{\mu,x} \vert \geq r \rbrace}] \xrightarrow[r\to \infty]{} 0$.
\end{enumerate}

\begin{enumerate}[(i)]
\item First note that $E$ endowed with $\sup_{t\in[0,T]} \Kcal(\cdot,\cdot)$, is a metric space. Let $\tilde{\varepsilon}>0$, $\mu \in E$ and $C_1>0$ be some constant. Moreover, let $C_{p,T}>0$ be a constant depending on $p$ and $T$ such that by Burkholder-Davis-Gundy's inequality $\Ebb \left[ |B_t|^{2p} \right]^{\frac{1}{2p}} \leq \frac{C_{p,T}}{2C_1}$ for all $t\in[0,T]$. Since $b$ is continuous in the third variable and $\cdot^2$ is a continuous function, we can find $\delta_1> 0$ such that for all $\nu \in E$ with $\sup_{t\in[0,T]} \Kcal(\mu_t,\nu_t) < \delta_1$,
	\begin{align}\label{boundContinuity1}
	\begin{split}
		\sup_{t\in[0,T], y\in\mathbb{R}} \vert b(t,y, \mu_t) - b(t,y, \nu_t) \vert &< \frac{\tilde{\varepsilon}}{2C_{p,T}T^{\frac{1}{2}}}, \\
		\sup_{t\in[0,T], y\in\mathbb{R}} \left| \vert b(t,y, \mu_t)\vert^2 - \vert b(t,y, \nu_t)\vert^2 \right\vert &< \frac{\tilde{\varepsilon}}{C_{p,T}T}.
	\end{split}
	\end{align}
	Furthermore, by the proof of \Cref{boundMeasureChange} we can find $\varepsilon>0$ such that
	\begin{align}\label{boundContinuity2}
		\sup_{\lambda \in E} \Ebb &\left[ \Ecal\left( - \int_0^T b(t,B_t^x,\lambda_t)dB_t \right)^{1+\varepsilon} \right]^{\frac{1}{1+\varepsilon}} \leq C_1.
	\end{align}
	Then, we get by the definition of $\psi$ that
\begin{align*}
	\Kcal&(\psi_t(\mu), \psi_t(\nu)) = \sup_{h \in \text{Lip}_1} \left\lbrace \left\vert \int_{\Rbb} h(y) \psi_t(\mu)(dy) - \int_{\Rbb} h(y) \psi_t(\nu)(dy) \right\vert \right\rbrace \\
	&= \sup_{h \in \text{Lip}_1} \left\lbrace \left\vert \int_{\Rbb} (h(y)-h(x)) \left( \Pbb^{\mu}_{X_t^{x,\mu}}-\Pbb^{\nu}_{X_t^{x,\nu}} \right)(dy) \right\vert \right\rbrace \\
	&= \sup_{h \in \text{Lip}_1} \left\lbrace \left\vert \Ebb_{\Qbb^{\mu}}\left[ \left(h(X_t^{x,\mu})-h(x)\right) \Ecal_t(\mu)\right] - \Ebb_{\Qbb^{\nu}}\left[ \left(h(X_t^{x,\nu})-h(x)\right) \Ecal_t(\nu)\right] \right\vert \right\rbrace \\
	&\leq  \Ebb\left[ \left\vert \Ecal_t(\mu) - \Ecal_t(\nu) \right\vert |B_t|\right],
\end{align*}
where $\frac{d\Qbb^{\mu}}{d\Pbb^{\mu}} = \Ecal\left( - \int_0^t b(s,X_s^{x,\mu},\mu_s)dB_s^{\mu} \right)$ defines an equivalent probability measure $\Qbb^{\mu}$ by \Cref{equivalentBrownianMeasurePbb}. Here we have used $\Ecal_t(\mu) := \Ecal\left(\int_0^t b(s,B_s^x,\mu_s)dB_s \right)$
and the fact that $X^{x,\mu}$ is a Brownian motion under $\Qbb^{\mu}$ starting in $x$ for all $\mu \in \Ccal([0,T];\Pcal(\Rbb))$. We get by the inequality 
\begin{align}\label{exponentialInequality}
	|e^y-e^z| \leq |y-z|(e^y+e^z), \quad y,z \in \Rbb,
\end{align}
	Hölder's inequality with $p:= \frac{1+\varepsilon}{\varepsilon}$, $\varepsilon >0$ sufficiently small with regard to \eqref{boundContinuity2}, and Minkowski's inequality that
\begin{align}\label{eq:estimateKantorovich}
\begin{split}
	\Kcal&(\psi_t(\mu), \psi_t(\nu)) \leq \Ebb \left[ |B_t| \left(\Ecal\left(\int_0^t b(s,B_s^x,\mu_s)dB_s \right)+\Ecal\left(\int_0^t b(s,B_s^x,\nu_s)dB_s \right)\right)\right.\\ 
	&\quad \times \left.\left\vert \int_0^t b(s,B_s^x, \mu_s) - b(s,B_s^x, \nu_s) dB_s  - \frac{1}{2} \int_0^t \vert b(s,B_s^x, \mu_s)\vert^2 - \vert b(s,B_s^x, \nu_s)\vert^2 ds \right\vert \right] \\
	&\leq \left( \Ebb \left[ \Ecal\left(\int_0^t b(s,B_s^x,\mu_s)dB_s \right)^{1+\varepsilon} \right]^{\frac{1}{1+\varepsilon}} + \Ebb\left[ \Ecal\left(\int_0^t b(s,B_s^x,\nu_s)dB_s \right)^{1+\varepsilon} \right]^{\frac{1}{1+\varepsilon}} \right) \\
	&\quad \times \left( \Ebb \left[ \left(\int_0^t \vert b(s,B_s^x, \mu_s) - b(s,B_s^x, \nu_s) \vert dB_s\right)^{2p} \right]^{\frac{1}{2p}} \right. \\
	&\quad + \left. \frac{1}{2} \Ebb\left[ \left( \int_0^t \left\vert \vert b(s,B_s^x, \mu_s)\vert^2 - \vert b(s,B_s^x, \nu_s)\vert^2 \right\vert ds \right)^{2p} \right]^{\frac{1}{2p}} \right) \Ebb \left[ |B_t|^{2p} \right]^{\frac{1}{2p}}.
\end{split}
\end{align}
Consequently, we get by Burkholder-Davis-Gundy's inequality and the bounds in \eqref{boundContinuity1} and \eqref{boundContinuity2} that
\begin{align*}
	\sup_{t\in[0,T]} \Kcal(\psi_t(\mu), \psi_t(\nu)) &\leq  C_{p,T} \left( \Ebb\left[ \left( \int_0^T \vert b(s,B_s^x, \mu_s) - b(s,B_s^x, \nu_s) \vert^2 ds\right)^p \right]^{\frac{1}{2p}} \right. \\
	& \quad + \left. \frac{1}{2} \Ebb \left[ \left( \int_0^T \left\vert \vert b(s,B_s^x, \mu_s)\vert^2 - \vert b(s,B_s^x, \nu_s)\vert^2 \right\vert ds \right)^{2p} \right]^{\frac{1}{2p}} \right)\\
	&< T^{\frac{1}{2}} \frac{\tilde{\varepsilon}}{2T^{\frac{1}{2}}} + \frac{T}{2} \frac{\tilde{\varepsilon}}{T} = \tilde{\varepsilon}.
\end{align*}
Hence, $\psi$ is continuous on $E$.
\item Define $p := \frac{1+\varepsilon}{\varepsilon}$, $\varepsilon > 0$ sufficiently small with regard to \eqref{boundContinuity2}, and let $\mu \in E$ and $s,t \in[0,T]$ be arbitrary. Then, equivalently to \eqref{eq:estimateKantorovich}
\begin{align*}
	\Kcal(\psi_t(\mu),\psi_s(\mu)) &\leq \Ebb\left[ \left\vert \Ecal_t(\mu) - \Ecal_s(\mu) \right\vert |B_t| \right] \\
	&\lesssim \Ebb\left[ \left\vert \int_s^t b(r,B_r^x, \mu_r) dB_r - \frac{1}{2} \int_s^t \vert b(r,B_r^x, \mu_r) \vert^2 dr \right\vert^{2p} \right]^{\frac{1}{2p}}.
\end{align*}
Furthermore, by applying Burkholder-Davis-Gundy's inequality, we get
\begin{align*}
	\Kcal&(\psi_t(\mu),\psi_s(\mu)) \lesssim \Ebb\left[ \left( \int_s^t \left\vert b(r,B_r^x, \mu_r)\right\vert^2 dr\right)^p \right]^{\frac{1}{2p}} + \Ebb\left[ \left( \int_s^t \left\vert b(r,B_r^x, \mu_r)\right\vert^2 dr\right)^{2p} \right]^{\frac{1}{2p}} \\
	&\leq \Ebb\left[|t-s|^p  \sup_{r\in[0,T]} \left\vert b(r,B_r^x, \mu_r)\right\vert^{2p} \right]^{\frac{1}{2p}} + \Ebb\left[|t-s|^{2p} \sup_{r\in[0,T]} \left\vert b(r,B_r^x, \mu_r)\right\vert^{4p} \right]^{\frac{1}{2p}}.
\end{align*}
Finally by \Cref{pNorm}, we get that 
\begin{align*}
	\Kcal(\psi_t(\mu),\psi_s(\mu)) &\leq C_2 \left( |t-s|^{\frac{1}{2}}+ |t-s| \right) \lesssim |t-s|^{\frac{1}{2}},
\end{align*}
for some constant $C_2>0$, which is independent of $\mu \in E$.
\item The claim holds by \Cref{pNorm} and dominated convergence for $r\to \infty$.
\end{enumerate}
\end{proof}

Next, we study uniqueness of weak solutions. We recall the definition of weak uniqueness, also called uniqueness in law.

\begin{definition}
	We say a weak solution $(\Omega^1, \Fcal^1, \Fbb^1, \Pbb^1, B^1, X^1)$ of \eqref{mainMcKeanVlasov} is \emph{weakly unique} or \emph{unique in law}, if for any other weak solution $(\Omega^2, \Fcal^2, \Fbb^2, \Pbb^2, B^2, X^2)$ of \eqref{mainMcKeanVlasov} it holds that
	\begin{align*}
		\Pbb^1_{X^1} = \Pbb^2_{X^2},
	\end{align*}
	whenever $X^1_0 = X^2_0$.
\end{definition}

In order to establish weak uniqueness we have to make further assumptions on the drift coefficient.

\begin{definition}\label{modulusOfContinuity}
	Let $b: [0,T] \times \Rbb \times \Pcal_1(\Rbb) \to \Rbb$ be a measurable function. We say $b$ admits $\theta$ as a modulus of continuity in the third variable, if there exists a continuous function $\theta:\Rbb^+ \to \Rbb^+$, with $\theta(y)>0$ for all $y\in \Rbb^+$, $\int_0^z \frac{dy}{\theta(y)} = \infty$ for all $z \in \Rbb^+$, and for all $t\in[0,T]$, $y \in \Rbb$ and $\mu,\nu \in \Pcal_1(\Rbb)$,
	\begin{align}\label{modulusOfContinuityEq}
		|b(t,y,\mu)-b(t,y,\nu)|^2 \leq \theta(\Kcal(\mu,\nu)^2).
	\end{align}
\end{definition}

\begin{remark}
	Note that this definition is a special version of the general definition of modulus of continuity. In general one requires $\theta$ to satisfy $\lim_{x\to 0} \theta(x) = 0$ and for all $t\in[0,T]$, $y \in \Rbb$ and $\mu,\nu \in \Pcal_1(\Rbb)$,
	\begin{align*}
		|b(t,y,\mu)-b(t,y,\nu)| \leq \theta(\Kcal(\mu,\nu)).
	\end{align*}
	 It is readily verified that if $b$ admits $\theta$ as a modulus of continuity according to \Cref{modulusOfContinuity} it also admits one in the sense of the general definition.
\end{remark}

\begin{theorem}\label{weakUniqueness}
	Let the drift coefficient $b: [0,T] \times \Rbb \times \Pcal_1(\Rbb) \to \Rbb$ satisfy conditions \eqref{linearGrowth} and \eqref{modulusOfContinuityEq}, i.e.~$b$ is of at most linear growth and admits a modulus of continuity in the third variable. Let $(\Omega^i, \Fcal^i, \Fbb^i, \Pbb^i,B^i,X^i)$, $i=1,2$, be two weak solutions of \eqref{mainMcKeanVlasov}. Then 
	\begin{align*}
		\Pbb_{(X^1,B^1)}^1 = \Pbb_{(X^2,B^2)}^2.
	\end{align*}
	In particular the solutions are unique in law.
\end{theorem}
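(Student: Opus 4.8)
The strategy is to reduce weak uniqueness for the mean-field equation to (a) weak uniqueness of the associated \emph{decoupled} SDEs \eqref{helpMcKeanVlasov} and (b) uniqueness of the flow of marginal laws. Given a weak solution $(\Omega^i,\Fcal^i,\Fbb^i,\Pbb^i,B^i,X^i)$, $i=1,2$, set $\mu^i_t:=\Pbb^i_{X^i_t}$; by \Cref{existenceWeakSolutionMFSDE} we have $\mu^i\in\Ccal([0,T];\Pcal_1(\Rbb))$, and $X^i$ is a weak solution of the decoupled SDE \eqref{helpMcKeanVlasov} with the fixed measure flow $\mu^i$ and drift $b^{\mu^i}$ of linear growth. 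A standard Girsanov argument (as in the construction preceding \Cref{existenceWeakSolutionMFSDE}, based on \Cref{equivalentBrownianMeasurePbb}) shows that every weak solution of such an SDE has the same joint law of $(X,B)$: removing the drift by the exponential change of measure with density $\Ecal(-\int_0^T b^{\mu^i}(s,X_s)\,dB_s)$ exhibits $\Pbb^i_{(X^i,B^i)}$ as an explicit functional of $\mu^i$ alone. Hence it suffices to prove $\mu^1=\mu^2$.

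To this end I would realise both decoupled solutions on one stochastic basis. Let $(\Omega,\Fcal,\Fbb,\Qbb)$ carry a Brownian motion $\tilde B$, set $X^x_t:=x+\tilde B_t$, and for $i=1,2$ let $\Pbb^i$ be the equivalent probability measure on $(\Omega,\Fcal)$ with density $\Ecal_T(\mu^i):=\Ecal(\int_0^T b(s,X^x_s,\mu^i_s)\,d\tilde B_s)$, well defined by \Cref{equivalentBrownianMeasurePbb}; by Girsanov and the weak uniqueness noted above, the law of $X^x_t$ under $\Pbb^i$ is $\mu^i_t$. Then one reruns the computation in step (i) of the proof of \Cref{existenceWeakSolutionMFSDE} with $\mu,\nu$ replaced by $\mu^1,\mu^2$ and the common process $X^x$: using that $\Lip_1(\Rbb)$ is stable under adding constants (so that $\Ebb_\Qbb[\Ecal_t(\mu^1)-\Ecal_t(\mu^2)]=0$), the elementary bound $|e^y-e^z|\le|y-z|(e^y+e^z)$, Hölder's inequality together with the uniform $(1+\varepsilon)$-moment bound for the densities from \Cref{boundMeasureChange} (cf.\ \eqref{boundContinuity2}), and the Burkholder--Davis--Gundy inequality, one arrives at
\begin{align*}
	\Kcal(\mu^1_t,\mu^2_t)\lesssim \Ebb_\Qbb\!\left[\Big(\int_0^t|b(s,X^x_s,\mu^1_s)-b(s,X^x_s,\mu^2_s)|^2\,ds\Big)^{p}\right]^{\frac{1}{2p}}+(\text{analogous }|b|^2\text{-difference term}),
\end{align*}
with a constant independent of $t\in[0,T]$.

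The key point is that the modulus of continuity \eqref{modulusOfContinuityEq} makes these integrands deterministic after estimation: $|b(s,X^x_s,\mu^1_s)-b(s,X^x_s,\mu^2_s)|^2\le\theta(\Kcal(\mu^1_s,\mu^2_s)^2)$, while for the second term one factors $\big\vert\,|b_1|^2-|b_2|^2\,\big\vert\le|b_1-b_2|(|b_1|+|b_2|)$, applies Cauchy--Schwarz in $s$, and bounds the remaining factor $\Ebb_\Qbb[(\int_0^t(|b_1|+|b_2|)^2\,ds)^{p}]^{1/2p}$ by a constant using the linear growth \eqref{linearGrowth}, $\sup_{t\in[0,T]}\Kcal(\mu^i_t,\delta_0)<\infty$, and the moment estimate \Cref{pNorm}. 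Writing $g(t):=\Kcal(\mu^1_t,\mu^2_t)^2$, this yields $g(t)\le C\int_0^t\theta(g(s))\,ds$ for $t\in[0,T]$, with $g$ continuous, nonnegative and $g(0)=0$; since $\int_0^z\frac{dy}{\theta(y)}=\infty$ for every $z>0$, a Bihari/Osgood-type argument forces $g\equiv0$, i.e.\ $\mu^1=\mu^2$. Consequently $X^1$ and $X^2$ are weak solutions of one and the same decoupled SDE \eqref{helpMcKeanVlasov}, and the first paragraph gives $\Pbb^1_{(X^1,B^1)}=\Pbb^2_{(X^2,B^2)}$. I expect the main obstacle to be the estimate of the second paragraph --- in particular checking that the contribution of the quadratic-variation parts of the Doléans--Dade exponentials is likewise dominated by $\int_0^t\theta(\Kcal(\mu^1_s,\mu^2_s)^2)\,ds$ and that every constant is uniform in $t$, which relies on the uniform integrability bounds from \Cref{boundMeasureChange} and \Cref{pNorm}; the concluding Bihari step is then routine, the continuity of $t\mapsto\Kcal(\mu^1_t,\mu^2_t)$ being supplied by \Cref{existenceWeakSolutionMFSDE}.
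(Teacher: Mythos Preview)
Your proposal is correct and follows essentially the same strategy as the paper: a Girsanov realisation of both marginal flows on a common Brownian space, estimation of the Kantorovich distance via the exponential inequality $|e^y-e^z|\le|y-z|(e^y+e^z)$, Hölder and BDG together with the moment bound from \Cref{boundMeasureChange}, the modulus of continuity \eqref{modulusOfContinuityEq}, and then Bihari's inequality. The paper organises things slightly differently---it works on $\Omega^2$ via an auxiliary measure $\tilde{\Qbb}^2$ rather than on a fresh probability space, and handles the quadratic term by a case split on $\int_0^t\theta\,ds\gtrless 1$ rather than your Cauchy--Schwarz factoring of $\big\vert\,|b_1|^2-|b_2|^2\,\big\vert$---but these are cosmetic variations of the same argument.
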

\begin{proof}
	For the sake of readability we just consider the case $x=0$. The general case follows in the same way. From \Cref{equivalentBrownianMeasurePbb} and Girsanov's theorem, we know that there exist measures $\Qbb^1$ and $\Qbb^2$ under which $X^1$ and $X^2$ are Brownian motions, respectively. Similarly to the idea in the proof of Theorem 4.2 in \cite{LiMin_WeakSolutions}, we define by \Cref{equivalentBrownianMeasurePbb} an equivalent probability measure $\tilde{\Qbb}^2$ by
	\begin{align*}
		\frac{d\tilde{\Qbb}^2}{d\Pbb^2} := \Ecal \left( -\int_0^T \left(b(s,X_s^2,\Pbb_{X_s^2}^2) - b(s,X_s^2,\Pbb_{X_s^1}^1) \right) dB_s^2 \right),
	\end{align*}
	and the $\tilde{\Qbb}^2$-Brownian motion
	\begin{align*}
		\tilde{B}_t^2 := B_t^2 + \int_0^t b(s,X_s^2,\Pbb_{X_s^2}^2) - b(s,X_s^2,\Pbb_{X_s^1}^1) ds, \quad t\in[0,T].
	\end{align*}
	Since
	\begin{align*}
		B_t^1 &= X_t^1 - \int_0^t b(s,X_s^1,\Pbb_{X^1_s}^1) ds \quad \text{ and} \quad \tilde{B}_t^2 = X_t^2 - \int_0^t b(s,X_s^2,\Pbb_{X^1_s}^1) ds,
	\end{align*}
	we can find a measurable function $\Phi:[0,T] \times \Ccal([0,T];\Rbb) \to \Rbb$ such that
	\begin{align*}
		B^1_t = \Phi_t(X^1) \quad \text{ and } \quad \tilde{B}^2_t = \Phi_t(X^2).
	\end{align*}
	Recall that $X^i$ is a $\Qbb^i$-Brownian motion, $i=1,2$. Consequently we have for every bounded measurable functional $F: \Ccal([0,T];\Rbb) \times \Ccal([0,T];\Rbb) \to \Rbb$
	\begin{align*}
		\Ebb_{\Pbb^1} [F(B^1,X^1)] &= \Ebb_{\Qbb^1}\left[\Ecal\left(\int_0^T b(t,X_t^1,\Pbb_{X_t^1}^1) dX_t^1\right) F(\Phi(X^1),X^1)\right] \\
		&= \Ebb_{\Qbb^2}\left[\Ecal\left(\int_0^T b(t,X_t^2,\Pbb_{X_t^1}^1) dX_t^2\right) F(\Phi(X^2),X^2)\right] \\
		&= \Ebb_{\tilde{\Qbb}^2}[F(\tilde{B}^2,X^2)].
	\end{align*}
	Hence,
	\begin{align}\label{measuresEqual}
		\Pbb^1_{(X^1,B^1)} = \tilde{\Qbb}^2_{(X^2,\tilde{B}^2)}.
	\end{align}
	It is left to show that $\sup_{t\in[0,T]} \Kcal(\tilde{\Qbb}^2_{X_t^2},\Pbb_{X_t^2}^2) = 0$, from which we conclude together with \eqref{measuresEqual} that $\sup_{t\in[0,T]} \Kcal(\Pbb^1_{X_t^1},\Pbb_{X_t^2}^2) = 0$ and hence $\frac{d\tilde{\Qbb}^2}{d\Pbb^2}=1$.  Consequently, $\Pbb^1_{(X^1,B^1)} = \Pbb^2_{(X^2,B^2)}$. \par
	Using Hölder's inequality, we get for $p:=\frac{1+\varepsilon}{\varepsilon}$, $\varepsilon>0$ sufficiently small with regard to \Cref{boundsSolution},
	\begin{align*}
		\Kcal&(\tilde{\Qbb}^2_{X_t^2},\Pbb_{X_t^2}^2) = \sup_{h\in\Lip_1} \left| \Ebb_{\tilde{\Qbb}^2} \left[h(X_t^2)-h(0) \right] - \Ebb_{\Pbb^2}\left[ h(X_t^2)-h(0)\right] \right| \\
		&\leq \sup_{h\in\Lip_1} \Ebb_{\Pbb^2} \left[\left|\Ecal \left( -\int_0^t \left(b(s,X_s^2,\Pbb_{X_s^2}^2) - b(s,X_s^2,\Pbb_{X_s^1}^1) \right) dB_s^2 \right)-1\right| \left| h\left(X_t^2\right)-h(0)\right|\right] \\
		&\leq \Ebb_{\Pbb^2} \left[\left|\Ecal \left( -\int_0^t \left(b(s,X_s^2,\Pbb_{X_s^2}^2) - b(s,X_s^2,\Pbb_{X_s^1}^1) \right) dB_s^2 \right)-1\right|^{\frac{2(1+\varepsilon)}{2+\varepsilon}}\right]^{\frac{2+\varepsilon}{2(1+\varepsilon)}}\\
		&\quad \times \Ebb\left[ \Ecal\left(\int_0^t b(s,B_s,\Pbb^2_{X_s^2}) dB_s \right)^{1+\varepsilon}\right]^{\frac{\varepsilon}{2(1+\varepsilon)^2}} \Ebb\left[ |B_t|^{2p^2}\right]^{\frac{1}{2p^2}} \\
		&\lesssim \Ebb_{\Pbb^2} \left[\left|\Ecal \left( -\int_0^t \left(b(s,X_s^2,\Pbb_{X_s^2}^2) - b(s,X_s^2,\Pbb_{X_s^1}^1) \right) dB_s^2 \right)-1\right|^{\frac{2(1+\varepsilon)}{2+\varepsilon}}\right]^{\frac{2+\varepsilon}{2(1+\varepsilon)}}.
	\end{align*}
	Using that $b$ admits a modulus of continuity in the third variable, we get by inequality \eqref{exponentialInequality}, \Cref{boundsSolution}, and Burkholder-Davis-Gundy's inequality that
	\begin{align*}
		\Kcal(\tilde{\Qbb}^2_{X_t^2},\Pbb_{X_t^2}^2) &\lesssim \Ebb_{\Pbb^2} \left[\left|\exp\left\lbrace -\int_0^t \left(b(s,X_s^2,\Pbb_{X_s^2}^2) - b(s,X_s^2,\Pbb_{X_s^1}^1) \right) dB_s^2 \right.\right.\right.\\
		&\left.\left.\left. \quad -\frac{1}{2} \int_0^t \left(b(s,X_s^2,\Pbb_{X_s^2}^2) - b(s,X_s^2,\Pbb_{X_s^1}^1) \right)^2 ds\right\rbrace-\exp\lbrace 0 \rbrace \right|^{\frac{2(1+\varepsilon)}{2+\varepsilon}}\right]^{\frac{2+\varepsilon}{2(1+\varepsilon)}}\\
		&\lesssim \Ebb_{\Pbb^2} \left[\left|\int_0^t \left(b(s,X_s^2,\Pbb_{X_s^2}^2) - b(s,X_s^2,\Pbb_{X_s^1}^1) \right) dB_s^2 \right.\right.\\
		&\left.\left. \quad + \frac{1}{2} \int_0^t \left(b(s,X_s^2,\Pbb_{X_s^2}^2) - b(s,X_s^2,\Pbb_{X_s^1}^1) \right)^2 ds \right|^{2p}\right]^{\frac{1}{2p}}\\
		&\lesssim \Ebb_{\Pbb^2} \left[\left|\int_0^t \left(b(s,X_s^2,\Pbb_{X_s^2}^2) - b(s,X_s^2,\Pbb_{X_s^1}^1) \right)^2 ds \right|^{p}\right]^{\frac{1}{2p}} \\
		&\quad + \Ebb_{\Pbb^2} \left[\left|\int_0^t \left(b(s,X_s^2,\Pbb_{X_s^2}^2) - b(s,X_s^2,\Pbb_{X_s^1}^1) \right)^2 ds \right|^{2p}\right]^{\frac{1}{2p}}\\
		&\leq \left( \int_0^t \theta\left( \Kcal(\tilde{\Qbb}^2_{X^2_s},\Pbb_{X_s^2}^2)^2\right) ds \right)^{\frac{1}{2}} + \int_0^t \theta\left( \Kcal(\tilde{\Qbb}^2_{X^2_s},\Pbb_{X_s^2}^2)^2\right) ds.
	\end{align*}
	Assume $\int_0^t \theta\left( \Kcal(\tilde{\Qbb}^2_{X^2_s},\Pbb_{X_s^2}^2)^2\right) ds \geq 1$. Then, 
	\begin{align*}
		\Kcal(\tilde{\Qbb}^2_{X_t^2},\Pbb_{X_t^2}^2)^2 \lesssim \int_0^t \tilde{\theta}\left( \Kcal(\tilde{\Qbb}^2_{X^2_s},\Pbb_{X_s^2}^2)^2\right) ds,
	\end{align*}
	where for all $z\in \Rbb^+$, $\tilde{\theta}:= \theta^2$ satisfies the assumption $\int_0^z \frac{1}{\tilde{\theta}(y)} dy = \infty$.\\
	In the case $0\leq \int_0^t \theta\left( \Kcal(\tilde{\Qbb}^2_{X^2_s},\Pbb_{X_s^2}^2)^2\right) ds < 1$, we get
	\begin{align*}
		\Kcal(\tilde{\Qbb}^2_{X_t^2},\Pbb_{X_t^2}^2)^2 \lesssim \int_0^t \theta\left( \Kcal(\tilde{\Qbb}^2_{X^2_s},\Pbb_{X_s^2}^2)^2\right) ds.
	\end{align*}
	We know that $t \mapsto \Kcal(\tilde{\Qbb}_{X_t^2}^2,\Pbb_{X_t^2}^2)$ is continuous by the proof of \cite{LiMin_WeakSolutions}[Theorem 4.2] and of \Cref{existenceWeakSolutionMFSDE}. Hence, by Bihari's inequality (cf. \cite{Mao_AdaptedSolutionsOfBSDEwithNonLipschitzCoefficients}[Lemma 3.6]) $\Kcal(\tilde{\Qbb}^2_{X^2_t},\Pbb_{X_t^2}^2) = 0$ for all $t\in[0,T]$, which completes the proof.	
\end{proof}

\subsection{Existence and Uniqueness of Strong Solutions}\label{SSStrong} 

We recall the definition of a strong solution.

\begin{definition}\label{DefStrS}
	A \emph{strong solution} of the mean-field SDE \eqref{mainMcKeanVlasov} is a weak solution \linebreak $(\Omega, \Fcal, \Fcal^B, \Pbb, B, X^x)$ where $\Fcal^B$ is the filtration generated by the Brownian motion $B$ and augmented with the $\Pbb$-null sets.
\end{definition}

\begin{remark}\label{RemDefStr}
	Note that according to \Cref{DefStrS}, we say that \eqref{mainMcKeanVlasov} has a strong solution as soon as there exists some stochastic basis $(\Omega,\Fcal,\Pbb,B)$ with a brownian-adapted solution $X^x$, while usually in the literature the definition of a strong solution requires the (a priori stronger) existence of a brownian-adapted solution of \eqref{mainMcKeanVlasov} on any given stochastic basis.  However, in our setting these two definitions are equivalent. Indeed, a given strong solution $(\Omega, \Fcal, \Fcal^B, \Pbb, B, X^x)$ of the mean-field SDE \eqref{mainMcKeanVlasov} can be considered a strong solution of the associated SDE
\begin{align}\label{rewrittenSDE}
			dX_t^x = b^{\Pbb_{X}}(t,X_t^x) dt + dB_t, \quad X_0^x = x, \quad t\in [0,T],
\end{align}
where we define the drift coefficient $b^{\Pbb_{X}}: [0,T] \times \Rbb \to \Rbb$ by
\begin{align*}
		b^{\Pbb_{X}}(t,y) := b(t,y,\Pbb_{X_t^x}).
\end{align*}
For strong solutions of SDEs it is then well-known that  there exists a family of functionals $(F_t)_{t\in[0,T]}$ with $X_t^x = F_t(B)$ (see e.g. \cite{MeyerBrandisProske_OnTheExistenceOfStrongSolutionsOfLevyNoiseSDEs} for an explicit form of $F_t$), such that for any other stochastic basis $(\hat{\Omega}, \hat{\Fcal}, \hat{\Qbb}, \hat{B})$ the process $\hat{X}_t^x := F_t(\hat{B})$ is a $\Fcal^{\hat{B}}$-adapted solution of SDE \eqref{rewrittenSDE}. Further, from the functional form of the solutions we obviously get $\Pbb_{X}=\Pbb_{\hat{X}}$, and thus $b^{\Pbb_{X}}(t,y) = b^{\Pbb_{\hat{X}}}(t,y):=b(t,y,\Pbb_{\hat{X}_t^x})$, such that $\hat{X}^x$ fulfills
\begin{align*}
			d\hat{X}_t^x = b^{\Pbb_{\hat{X}}}(t,\hat{X}_t^x) dt + d\hat{B}_t, \quad \hat{X}_0^x = x, \quad t\in [0,T],
\end{align*}
i.e.~$(\hat{\Omega}, \hat{\Fcal}, \hat{\Qbb}, \hat{B}, \hat{X}^x)$ is a strong solution of the mean-field SDE \eqref{mainMcKeanVlasov}. Hence, the two definitions of strong solutions are equivalent. 
\end{remark}

In addition to weak uniqueness, a second type of uniqueness usually considered in the context of strong solutions is path-wise uniqueness:

\begin{definition}
	We say a weak solution $(\Omega, \Fcal, \Fbb, \Pbb, B^1, X^1)$ of \eqref{mainMcKeanVlasov} is \emph{path-wisely unique}, if for any other weak solution $(\Omega, \Fcal, \Fbb, \Pbb, B^2, X^2)$ on the same stochastic basis,
	\begin{align*}
		\Pbb\left( \forall t\geq 0: X^1_t = X^2_t \right) = 1.
	\end{align*}
\end{definition}

\begin{remark}\label{remarkUniqueness}
Note that in our setting weak uniqueness and path-wise uniqueness of strong solutions of the mean-field SDE \eqref{mainMcKeanVlasov} are equivalent. Indeed, any weakly unique strong solution of \eqref{mainMcKeanVlasov} is a weakly unique strong solution of the same associated SDE \eqref{rewrittenSDE}, i.e.~the drift coefficient in \eqref{rewrittenSDE} does not vary with the solution since the law of the solution is unique. Due to \cite{Cherny_UniquenessInLaw}[Theorem 3.2], a weakly unique strong solution of an SDE is always path-wisely unique, and thus a weakly unique strong solution of \eqref{mainMcKeanVlasov} is path-wisely unique. Vice versa, by the considerations in Remark~\ref{RemDefStr}, any path-wisely unique strong solution $(\Omega, \Fcal, \Pbb, B, X^x)$ of  \eqref{mainMcKeanVlasov} can be represented by $X_t^x = F_t(B)$ for some unique family of functionals $(F_t)_{t\in[0,T]}$ that does not vary with the stochastic basis.  Consequently, the strong solution is weakly unique. Thus, in the following we will just speak of a \emph{unique strong solution} of \eqref{mainMcKeanVlasov}.
\end{remark}

In order to establish existence of strong solutions we require in addition to the assumptions in \Cref{existenceWeakSolutionMFSDE} that the drift coefficient exhibits the particular linear growth given by the decomposable form \eqref{formDrift}, that is, the irregular behavior of the drift stays in a bounded spectrum.

\begin{theorem}\label{strongSolution}
	Suppose the drift coefficient $b$ is in the decomposable form \eqref{formDrift} and additionally continuous in the third variable, i.e.~fulfills \eqref{continuityThirdVariable}. Then there exists a strong solution of the mean-field SDE \eqref{mainMcKeanVlasov}. More precisely, any weak solution $(X_t^x)_{t\in [0,T]}$ of \eqref{mainMcKeanVlasov} is a strong solution, and in addition $X_t^x$ is Malliavin differentiable for every $t\in [0,T]$. \\
	If moreover $b$ satisfies \eqref{modulusOfContinuityEq}, i.e.~$b$ admits a modulus of continuity in the third variable, the solution is unique.
\end{theorem}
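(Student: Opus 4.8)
The plan is to reduce the mean-field equation to a classical SDE with a frozen marginal-law flow and then transfer the known existence, uniqueness and Malliavin-differentiability results for SDEs with irregular drift. First I would apply \Cref{existenceWeakSolutionMFSDE} to obtain a weak solution $(\Omega, \Fcal, \Fbb, \Pbb, B, X^x)$ of \eqref{mainMcKeanVlasov}; this is legitimate because the decomposable form \eqref{formDrift} is in particular of at most linear growth \eqref{linearGrowth}, and \eqref{continuityThirdVariable} is assumed. The same theorem yields $t \mapsto \Pbb_{X_t^x} \in \Ccal([0,T];\Pcal_1(\Rbb))$. Freezing this deterministic curve of measures and setting $b^{\Pbb_{X}}(t,y) := b(t,y,\Pbb_{X_t^x})$, the process $X^x$ solves the ordinary SDE \eqref{rewrittenSDE} on the given stochastic basis.

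The key observation is that $b^{\Pbb_{X}}$ inherits the decomposable structure required by the external strong-solution theory. Writing $\bhat^{\Pbb_{X}}(t,y) := \bhat(t,y,\Pbb_{X_t^x})$ and $\btilde^{\Pbb_{X}}(t,y) := \btilde(t,y,\Pbb_{X_t^x})$, one has $b^{\Pbb_{X}} = \bhat^{\Pbb_{X}} + \btilde^{\Pbb_{X}}$ with $\bhat^{\Pbb_{X}}$ measurable and bounded (the bound being uniform in $(t,y)$ by \eqref{formDrift}), and $\btilde^{\Pbb_{X}}$ of at most linear growth (using \eqref{linearGrowth} together with $\sup_{t\in[0,T]}\Kcal(\Pbb_{X_t^x},\delta_0) < \infty$, which holds since a continuous curve on the compact $[0,T]$ is bounded) and Lipschitz continuous in the space variable with the same constant as in \eqref{lipschitzContinuous}. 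Hence the drift of \eqref{rewrittenSDE} meets precisely the standing assumptions under which \cite{MeyerBrandisBanosDuedahlProske_ComputingDeltas}, \cite{MenoukeuMeyerBrandisNilssenProskeZhang_VariationalApproachToTheConstructionOfStrongSolutions} and \cite{MeyerBrandisProske_ConstructionOfStrongSolutionsOfSDEs} construct a Malliavin differentiable strong solution and establish path-wise uniqueness.

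From here I would argue as follows. By the cited works there is a family of measurable functionals $(F_t)_{t\in[0,T]}$ such that $\hat{X}_t^x := F_t(B)$ is an $\Fcal^B$-adapted solution of \eqref{rewrittenSDE} on $(\Omega,\Fcal,\Fbb,\Pbb,B)$, and \eqref{rewrittenSDE} is path-wisely unique (weak uniqueness of \eqref{rewrittenSDE} holding by Girsanov's theorem via \Cref{equivalentBrownianMeasurePbb}, and path-wise uniqueness following, e.g.\ through \cite{Cherny_UniquenessInLaw}). Since $X^x$ is also a solution of \eqref{rewrittenSDE} on this basis, path-wise uniqueness forces $X^x = \hat{X}^x$ $\Pbb$-a.s.; in particular $X^x$ is adapted to the augmented Brownian filtration $\Fcal^B$, so $(\Omega, \Fcal, \Fcal^B, \Pbb, B, X^x)$ is a strong solution of \eqref{mainMcKeanVlasov}, and $X_t^x = F_t(B)$ is Malliavin differentiable for every $t\in[0,T]$ by the corresponding statement in the cited references. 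Finally, if in addition \eqref{modulusOfContinuityEq} holds, then \Cref{weakUniqueness} gives weak uniqueness of \eqref{mainMcKeanVlasov}, which by \Cref{remarkUniqueness} is equivalent to path-wise uniqueness of the strong solution, so the strong solution is unique.

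The main obstacle is really a bookkeeping point rather than a deep difficulty: one must check that freezing the marginal law preserves exactly the hypotheses of the external irregular-drift results, i.e.\ that $\bhat^{\Pbb_{X}}$ remains uniformly bounded and $\btilde^{\Pbb_{X}}$ remains of linear growth uniformly in time, which rests on the compactness of $[0,T]$ and the continuity of $t\mapsto\Pbb_{X_t^x}$ in $\Pcal_1(\Rbb)$ from \Cref{existenceWeakSolutionMFSDE}. Everything else is a direct transfer of existence, Malliavin differentiability and path-wise uniqueness from the classical theory of SDEs with irregular coefficients to the mean-field equation, using the reinterpretation \eqref{commonSDE}.
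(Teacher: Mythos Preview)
Your proposal is correct and follows essentially the same strategy as the paper: obtain a weak solution via \Cref{existenceWeakSolutionMFSDE}, freeze its marginal-law flow to reduce to the classical SDE \eqref{rewrittenSDE}, verify that the frozen drift satisfies the hypotheses of \cite{MeyerBrandisBanosDuedahlProske_ComputingDeltas}, and then invoke \Cref{weakUniqueness} and \Cref{remarkUniqueness} for uniqueness. The only cosmetic difference is in the identification step: the paper argues directly from the Girsanov structure (via \Cref{equivalentBrownianMeasurePbb} and the constructive proof of \cite{MeyerBrandisBanosDuedahlProske_ComputingDeltas}[Theorem 3.1]) that the given weak solution $X^x$ coincides with the strong solution built there, whereas you transfer the strong solution $\hat X^x = F_t(B)$ to the given basis and use path-wise uniqueness of \eqref{rewrittenSDE} to conclude $X^x=\hat X^x$; both routes are valid and yield the same conclusion.
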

\begin{proof}
	Let $(\Omega, \Fcal, \Fbb, \Pbb, B, X^x)$ be a weak solution of the mean-field SDE \eqref{mainMcKeanVlasov}, which exists by \Cref{existenceWeakSolutionMFSDE}. Then $X^x$ is a weak solution of SDE~\eqref{rewrittenSDE}. \par 
Now we note that under the assumptions specified in \Cref{strongSolution} the drift $b^{\Pbb_{X}}(t,y)$ in \eqref{rewrittenSDE} satisfies the conditions required in \cite{MeyerBrandisBanosDuedahlProske_ComputingDeltas}[Theorem 3.1], from which it follows that there exists a unique strong solution of SDE \eqref{rewrittenSDE} that is Malliavin differentiable. Since by \Cref{equivalentBrownianMeasurePbb} the Radon-Nikodym density $\Ecal\left( - \int_0^t b^{\Pbb_{X}}(s,X_t^{x})dB_s \right)$ defines an equivalent probability measure under which $X^{x}$ is Brownian motion, it readily follows from the proof of  \cite{MeyerBrandisBanosDuedahlProske_ComputingDeltas}[Theorem 3.1] that $X^x$ must be this strong solution, and thus $X^x$ is a Malliavin differentiable strong solution of the mean-field SDE \eqref{mainMcKeanVlasov}. If further $b$ admits a modulus of continuity in the third variable, then by \Cref{weakUniqueness}, $X^x$ is a weakly (and by \Cref{remarkUniqueness} also path-wisely) unique strong solution of \eqref{mainMcKeanVlasov}.
\end{proof}


\section{Regularity properties}

We start this section by giving a probabilistic representation of the Malliavin derivative of a strong solution to the mean-field SDE \eqref{mainMcKeanVlasov}. If $b$ is Lipschitz continuous in the second variable, it is well-known that the Malliavin derivative is given by 
$ D_sX_t^x = \exp \left\lbrace \int_s^t \partial_2 b(u,X_u^x,\Pbb_{X_u^x}) du \right\rbrace$. For irregular drift $b$ we obtain the following generalized representation without the derivative of $b$ which is an immediate consequence of the proof of \Cref{strongSolution} and \cite{MeyerBrandisBanosDuedahlProske_ComputingDeltas}[Proposition 3.2]:

\begin{proposition}\label{derivativeMalliavin}
	Suppose the drift coefficient $b$ satisfies the assumptions of \Cref{strongSolution}. Then for $0\leq s \leq t \leq T$, the Malliavin derivative $D_sX_t^x$ of a strong solution $X^x$ to the mean-field SDE \eqref{mainMcKeanVlasov} has the following representation:
	\begin{align*}
		D_sX_t^x = \exp \left\lbrace - \int_s^t \int_{\Rbb} b(u,y,\Pbb_{X_u^x}) L^{X^x}(du,dy) \right\rbrace
	\end{align*}
	Here $L^{X^x}(du,dy)$ denotes integration with respect to local time of $X^x$ in time and space, see \cite{MeyerBrandisBanosDuedahlProske_ComputingDeltas} and \cite{Eisenbaum_LocalTimeIntegral} for more details.\\
\end{proposition}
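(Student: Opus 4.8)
The plan is to reduce \Cref{derivativeMalliavin} to the already-established representation of the Malliavin derivative for strong solutions of ordinary SDEs with irregular drift in \cite{MeyerBrandisBanosDuedahlProske_ComputingDeltas}[Proposition 3.2]. First I would invoke the proof of \Cref{strongSolution}: a strong solution $X^x$ of the mean-field SDE \eqref{mainMcKeanVlasov} coincides with the strong solution of the associated ordinary SDE \eqref{rewrittenSDE} with drift $b^{\Pbb_X}(t,y):=b(t,y,\Pbb_{X_t^x})$. Since $b$ is in the decomposable form \eqref{formDrift}, the drift $b^{\Pbb_X}$ inherits this decomposition: $b^{\Pbb_X}(t,y)=\bhat(t,y,\Pbb_{X_t^x})+\btilde(t,y,\Pbb_{X_t^x})$, where the first summand is bounded and measurable and the second is of linear growth and Lipschitz in $y$ (uniformly, since the Lipschitz constant in \eqref{lipschitzContinuous} does not depend on the measure argument). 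Hence $b^{\Pbb_X}$ satisfies the hypotheses of \cite{MeyerBrandisBanosDuedahlProske_ComputingDeltas}[Theorem 3.1 and Proposition 3.2].

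Next I would apply \cite{MeyerBrandisBanosDuedahlProske_ComputingDeltas}[Proposition 3.2] directly to SDE \eqref{rewrittenSDE}: for $0\leq s\leq t\leq T$, the Malliavin derivative of its strong solution is
\begin{align*}
	D_sX_t^x = \exp\left\lbrace -\int_s^t \int_{\Rbb} b^{\Pbb_X}(u,y) L^{X^x}(du,dy)\right\rbrace.
\end{align*}
Substituting back $b^{\Pbb_X}(u,y)=b(u,y,\Pbb_{X_u^x})$ into the local-time space integral yields exactly the claimed formula. The only point requiring a word of justification is that the local-time space integral $\int_s^t\int_{\Rbb} b(u,y,\Pbb_{X_u^x}) L^{X^x}(du,dy)$ is well-defined in the sense of \cite{Eisenbaum_LocalTimeIntegral} as extended in \cite{MeyerBrandisBanosDuedahlProske_ComputingDeltas}; this follows because $u\mapsto b(u,y,\Pbb_{X_u^x})$ is (by \Cref{existenceWeakSolutionMFSDE}, $\Pbb_{X_\cdot^x}\in\Ccal([0,T];\Pcal_1(\Rbb))$) a measurable function of $(u,y)$ obeying the same linear growth and decomposability bounds as $b^{\Pbb_X}$, which are precisely the integrability conditions needed in \cite{MeyerBrandisBanosDuedahlProske_ComputingDeltas} for the local-time integral to exist.

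The main (and essentially only) obstacle is a bookkeeping one: one must confirm that the reinterpretation of $X^x$ as a solution of \eqref{rewrittenSDE} — which was carried out in \Cref{strongSolution} purely for the purpose of existence — also transports the Malliavin-derivative identity of \cite{MeyerBrandisBanosDuedahlProske_ComputingDeltas}[Proposition 3.2] without alteration. Since the Malliavin derivative is an intrinsic object on Wiener space that does not see whether the drift was originally ``mean-field'' or not, and since the strong solution of \eqref{mainMcKeanVlasov} and of \eqref{rewrittenSDE} are literally the same random variable (same functional of the Brownian path), there is nothing further to prove; the representation is immediate. I would therefore present the proof as a one-line deduction, citing the proof of \Cref{strongSolution} for the identification with \eqref{rewrittenSDE} and \cite{MeyerBrandisBanosDuedahlProske_ComputingDeltas}[Proposition 3.2] for the representation itself.
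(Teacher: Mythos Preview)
Your proposal is correct and matches the paper's approach exactly: the paper states the proposition as ``an immediate consequence of the proof of \Cref{strongSolution} and \cite{MeyerBrandisBanosDuedahlProske_ComputingDeltas}[Proposition 3.2]'' and gives no further argument. Your additional remarks verifying that $b^{\Pbb_X}$ inherits the decomposability and that the local-time integral is well-defined are sound elaborations of precisely this one-line deduction.
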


In the remaining section we analyze the regularity of a strong solution $X^x$ of \eqref{mainMcKeanVlasov} in its initial condition $x$. More precisely, the main result in Theorem~\ref{solutionInSobolev} shows the existence of a weak (Sobolev) derivative $\partial_x X_t^x $ for irregular drift coefficients for every $t\in[0,T]$, which also is referred to as the \textit{first variation process}. We first show Lipschitz continuity of $X_t^x$ in $x$ for smooth coefficients $b$ in Porposition~\ref{differentiable}, which consecutively is employed together with an approximation argument and a compactness criterion to extend weak differentiability to more general coefficients $b$ in \Cref{solutionInSobolev}. Further, we give a probabilistic representation of the first variation process and establish a connection to the Malliavin derivative that will be employed to derive the Bismut-Elworthy-Li formula in Section~\ref{SBEL}. 
\begin{proposition}\label{differentiable}
	Let $b \in C_b^{1,L}(\Rbb\times \Pcal_1(\Rbb))$ and $X^x$ be the unique strong solution of mean-field SDE \eqref{mainMcKeanVlasov}. Then, for all $t\in [0,T]$ the map $x \mapsto X_t^x$ is a.s. Lipschitz continuous and consequently weakly and almost everywhere differentiable. Moreover, the first variation process $\partial_x X_t^x$, $t\in[0,T]$, has the representation
	\begin{align}\label{representationFV}
	\begin{split}
		\partial_x X_t^x &= \exp \left\lbrace \int_0^t \partial_2 b(s,X_s^x, \Pbb_{X_s^x}) ds \right\rbrace \\
		&\quad+ \int_0^t \exp \left\lbrace \int_u^t \partial_2 b(s,X_s^x, \Pbb_{X_s^x}) ds \right\rbrace \partial_x b(u,y,\Pbb_{X_u^x})\vert_{y= X_u^x} du.
	\end{split}
	\end{align}
\end{proposition}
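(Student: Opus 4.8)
The plan is to obtain the stated regularity in three steps: an a priori Lipschitz estimate for $x\mapsto X_t^x$ via a two-step Gronwall argument, the passage to weak and a.e.\ differentiability, and finally the identification of $\partial_x X_t^x$ with the right-hand side of \eqref{representationFV} by differentiating the integral form of \eqref{mainMcKeanVlasov}. \emph{Step 1 (Lipschitz estimate).} Fix $x,x'\in\Rbb$, set $\Delta_t:=X_t^x-X_t^{x'}$, and use that $b$ has space-derivative bounded by the constant $C$, is $C$-Lipschitz in the measure variable, and satisfies $\Kcal(\Pbb_{X_s^x},\Pbb_{X_s^{x'}})\leq\Ebb[|\Delta_s|]$, to deduce from \eqref{mainMcKeanVlasov} the pathwise bound
\begin{align*}
	|\Delta_t|\leq|x-x'|+C\int_0^t|\Delta_s|\,ds+C\int_0^t\Ebb[|\Delta_s|]\,ds.
\end{align*}
Taking expectations and applying Gronwall's lemma gives $\Ebb[|\Delta_t|]\leq|x-x'|e^{2Ct}$ (and, running the same computation in $L^p$, $\Ebb[|\Delta_t|^p]\lesssim|x-x'|^p$ for every $p\geq1$); inserting the $L^1$-bound back into the displayed inequality, whose $\int_0^t|\Delta_s|\,ds$-coefficient is the deterministic constant $C$, and applying Gronwall's lemma once more produces a deterministic $K_T>0$ with $\sup_{t\in[0,T]}|\Delta_t|\leq K_T|x-x'|$ $\Pbb$-a.s. \emph{Step 2 (differentiability).} Passing to a continuous-in-$x$ modification of $x\mapsto X_t^x$ (which exists by the $L^p$-estimate and Kolmogorov's continuity theorem) and combining with the pathwise bound on a countable dense set of initial values, there is a single $\Pbb$-null set off which $x\mapsto X_t^x(\omega)$ is globally Lipschitz on $\Rbb$, hence weakly differentiable with locally bounded weak derivative and Lebesgue-a.e.\ differentiable; this derivative is $\partial_x X_t^x$. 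Step 1 also shows that $x\mapsto\Pbb_{X_s^x}$ is Lipschitz from $\Rbb$ into $(\Pcal_1(\Rbb),\Kcal)$, so for each fixed $(s,y)$ the map $x\mapsto b(s,y,\Pbb_{X_s^x})$ is $CK_T$-Lipschitz uniformly in $y$, hence a.e.\ differentiable; write $\partial_x b(s,y,\Pbb_{X_s^x})$ for its derivative.

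\emph{Step 3 (identification of the derivative).} Let $Z_t$ be the right-hand side of \eqref{representationFV}; by variation of constants it is the unique pathwise solution of the linear random ODE with bounded coefficients
\begin{align*}
	Z_t=1+\int_0^t\partial_2 b(s,X_s^x,\Pbb_{X_s^x})\,Z_s\,ds+\int_0^t\partial_x b(s,y,\Pbb_{X_s^x})\vert_{y=X_s^x}\,ds.
\end{align*}
To show $\partial_x X_t^x=Z_t$, put $Y_t^\varepsilon:=\varepsilon^{-1}(X_t^{x+\varepsilon}-X_t^x)$; subtracting the two integral equations and Taylor-expanding $b$ in its smooth space variable,
\begin{align*}
	Y_t^\varepsilon=1+\int_0^t\Big(\int_0^1\partial_2 b\big(s,X_s^x+\lambda(X_s^{x+\varepsilon}-X_s^x),\Pbb_{X_s^{x+\varepsilon}}\big)\,d\lambda\Big)Y_s^\varepsilon\,ds+\int_0^t\frac{b(s,X_s^x,\Pbb_{X_s^{x+\varepsilon}})-b(s,X_s^x,\Pbb_{X_s^x})}{\varepsilon}\,ds.
\end{align*}
Subtracting the equation for $Z$ and invoking Gronwall's lemma, it remains to let $\varepsilon\to0$ in the two integrands. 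The first converges to $\partial_2 b(s,X_s^x,\Pbb_{X_s^x})Z_s$ because $|Y_s^\varepsilon|\leq K_T$, because $X_s^{x+\varepsilon}\to X_s^x$ and $\Kcal(\Pbb_{X_s^{x+\varepsilon}},\Pbb_{X_s^x})\to0$ (both at rate $K_T|\varepsilon|$), and because $\partial_2 b$ is jointly continuous in $(y,\mu)$: continuity in $y$ is clear from $\partial_2 b(s,\cdot,\mu)\in\Lip_C(\Rbb)$, and continuity in $\mu$ follows from the interpolation inequality $\|g'\|_\infty^2\leq4\|g\|_\infty\|g''\|_\infty$ applied to $g:=b(s,\cdot,\mu)-b(s,\cdot,\nu)$, for which $\|g\|_\infty\leq C\Kcal(\mu,\nu)$ and $\|g''\|_\infty\leq2C$, so that $\mu\mapsto\partial_2 b(s,y,\mu)$ is $\tfrac12$-Hölder continuous uniformly in $(s,y)$. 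The second integrand converges to $\partial_x b(s,y,\Pbb_{X_s^x})\vert_{y=X_s^x}$ by Step 2. Dominated convergence then closes the Gronwall estimate, giving $\sup_{t\in[0,T]}|Y_t^\varepsilon-Z_t|\to0$ for a.e.\ $x$, hence $\partial_x X_t^x=Z_t$, and writing $Z$ out by variation of constants is precisely \eqref{representationFV}.

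The main obstacle is legitimising the mean-field term $\partial_x b(u,y,\Pbb_{X_u^x})\vert_{y=X_u^x}$, since $b$ is only Lipschitz — not differentiable — in the measure variable. The cleanest route turns this around: by Step 1 the map $x\mapsto(X_t^x)_{t\in[0,T]}$ is Lipschitz as a map into $\Ccal([0,T];\Rbb)$, hence a.e.\ differentiable, so $Y_t^\varepsilon\to\partial_x X_t^x$ uniformly in $t$ for a.e.\ $x$; since the first integral in the equation for $Y^\varepsilon$ converges (by the continuity of $\partial_2 b$ established above), the second integral $\int_0^t\varepsilon^{-1}[b(s,X_s^x,\Pbb_{X_s^{x+\varepsilon}})-b(s,X_s^x,\Pbb_{X_s^x})]\,ds$ must converge as well, and one \emph{defines} $\int_0^t\partial_x b(s,y,\Pbb_{X_s^x})\vert_{y=X_s^x}\,ds$ as this limit, whereupon the linear ODE for $\partial_x X^x$ holds and its unique solution is \eqref{representationFV}. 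If instead the term is to be read pointwise, one notes that the exceptional set $B_y\subset\Rbb$ of non-differentiability of $x\mapsto b(u,y,\Pbb_{X_u^x})$ yields a planar null set $\{(x,y):x\in B_y\}$ (every $y$-section is null), so for a.e.\ $x$ the slice $\{y:x\in B_y\}$ is Lebesgue-null, and then the absolute continuity of $\Pbb_{X_u^x}$ (a consequence of the Girsanov construction underlying \Cref{strongSolution}) forces $\Pbb_{X_u^x}(\{y:x\in B_y\})=0$ for a.e.\ $x$. The remaining ingredients — the two Gronwall applications, the Kolmogorov criterion, the interpolation bound, and the variation-of-constants formula — are routine.
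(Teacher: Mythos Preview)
Your argument is correct and follows the same overall strategy as the paper: a two-step Gronwall argument for the Lipschitz bound (first on $\Kcal(\Pbb_{X_s^x},\Pbb_{X_s^{x'}})$ via expectations, then pathwise on $|X_t^x-X_t^{x'}|$), followed by differentiation of the integral equation and identification of the derivative via variation of constants. The paper's own proof is much terser: after the Gronwall step it simply states that taking $\partial_x$ in \eqref{mainMcKeanVlasov} yields the linear ODE \eqref{implRepresentationFV}, and then checks that \eqref{representationFV} solves it.

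What you add is a rigorous justification of that differentiation step, which the paper leaves formal. Your Kolmogorov/dense-set argument to secure a single $\Pbb$-null set, your explicit difference-quotient computation for $Y_t^\varepsilon$, and especially your Landau--Kolmogorov interpolation $\|g'\|_\infty^2\lesssim\|g\|_\infty\|g''\|_\infty$ to extract $\tfrac12$-H\"older continuity of $\partial_2 b$ in the measure variable from the mere Lipschitz assumption on $b$, are all genuine technical content absent from the paper. Your closing discussion of how to interpret $\partial_x b(u,y,\Pbb_{X_u^x})$---either as the limit forced by the equation or pointwise via a Fubini/absolute-continuity argument---also addresses an issue the paper passes over by simply noting that $x\mapsto b(t,y,\Pbb_{X_t^x})$ is Lipschitz. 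In short: same route, but you have filled in the analytic details that the paper treats as routine.
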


\begin{remark}
	Note that compared to \cite{Banos_Bismut} we consider the more general case of mean-field SDEs of type \eqref{mainMcKeanVlasov} and therefore need to deal with differentiability of functions over the metric space $\Pcal_1(\Rbb)$ as in \cite{BuckdahnLiPengRainer_MFSDEandAssociatedPDE}, \cite{LasryLions_MeanFieldGames} and \cite{Cardaliaguet_NotesOnMeanFieldGames}. We avoid using the notion of differentiation with respect to a measure by considering the real function $x \mapsto b(t,y,\Pbb_{X_t^x})$, for which differentiation is understood in the Sobolev sense.
\end{remark}

\begin{proof}[Proof of \Cref{differentiable}]
		In order to prove Lipschitz continuity we have to show that there exists a constant $C>0$ such that for almost every $\omega \in \Omega$ and for all $t\in [0,T]$ the map $(x\mapsto X_t^x) \in \Lip_C(\Rbb)$. For notational reasons we hide $\omega$ in our computations and obtain using $b\in C_b^{1,L}(\Rbb\times\Pcal_1(\Rbb))$ that
	\begin{align}\label{solutionLipschitz}
	\begin{split}
		|X_t^x - X_t^y| &= \left| x-y + \int_0^t b(s,X_s^x,\Pbb_{X_s^x}) - b(s,X_s^y, \Pbb_{X_s^y}) ds \right| \\
		&\lesssim |x-y| + \int_0^t |X_s^x - X_s^y| + \Kcal(\Pbb_{X_s^x},\Pbb_{X_s^y}) ds.
		\end{split}
	\end{align}
	Hence, we immediately get that
	\begin{align*}
		\Kcal(\Pbb_{X_t^x},\Pbb_{X_t^y}) \leq \Ebb[|X_t^x - X_t^y|] \lesssim |x-y| + \int_0^t \Kcal(\Pbb_{X_s^x},\Pbb_{X_s^y}) ds,
	\end{align*}
	and therefore by Grönwall's inequality that
	\begin{align}\label{measureLipschitz}
		\Kcal(\Pbb_{X_s^x},\Pbb_{X_s^y}) \lesssim |x-y|.
	\end{align}
	Consequently, \eqref{solutionLipschitz} simplifies to 
	\begin{align}\label{contX}
		|X_t^x - X_t^y| \lesssim |x-y| + \int_0^t |X_s^x - X_s^y| ds,
	\end{align}
	and again by Grönwall's inequality we get that $(x\mapsto X_t^x) \in \Lip_C(\Rbb)$. Note that due to \eqref{measureLipschitz} and the assumptions on $b$ also $x \mapsto b(t,y,\Pbb_{X_t^x})$ is weakly differentiable for every $t \in [0,T]$ and $y \in \Rbb$.\\
	Regarding representation \eqref{representationFV}, note first that by taking the derivative with respect to $x$ in \eqref{mainMcKeanVlasov}, $\partial_x X_t^x$ has the representation
	\begin{align}\label{implRepresentationFV}
		\partial_x X_t^x = 1+ \int_0^t \partial_2 b(s,X_s^x, \Pbb_{X_s^x}) \partial_x X_s^x + \partial_x b(s,y,\Pbb_{X_s^x})\vert_{y= X_s^x} ds.
	\end{align}
	It is readily seen that \eqref{representationFV} solves this ODE $\omega$-wise and therefore is a representation of the first variation process of $X_t^x$.
\end{proof}

As an immediate consequence of \Cref{differentiable} and the representation of the Malliavin derivative $D_sX_t^x$, $0\leq s \leq t \leq T$, we get the following connection between the first variation process and the Malliavin derivative:

\begin{corollary}\label{derivativesRegular}
	Let $b \in C_b^{1,L}(\Rbb\times \Pcal_1(\Rbb))$. Then, for every $0 \leq s \leq t \leq T$,
	\begin{align}\label{relationMallFV}
		\partial_x X_t^x = D_sX_t^x \partial_x X_s^x + \int_s^t D_uX_t^x \partial_x b(u,y,\Pbb_{X_u^x})\vert_{y= X_u^x} du.
	\end{align}
\end{corollary}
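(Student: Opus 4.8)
The identity \eqref{relationMallFV} is, as the lead-in sentence suggests, obtained by substituting the two explicit representations already at hand into the right-hand side and simplifying; the plan is as follows.

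\textbf{Ingredients.} Since $b \in C_b^{1,L}(\Rbb\times\Pcal_1(\Rbb))$ is in particular Lipschitz continuous in the second variable, the mean-field SDE \eqref{mainMcKeanVlasov} coincides with the classical SDE \eqref{rewrittenSDE} with Lipschitz drift $b^{\Pbb_X}(t,y)=b(t,y,\Pbb_{X_t^x})$, so the standard Malliavin calculus for SDEs with Lipschitz coefficients applies and gives, for $0\le u\le t\le T$, the exponential representation $D_uX_t^x=\exp\{\int_u^t \partial_2 b(r,X_r^x,\Pbb_{X_r^x})\,dr\}$ recalled at the beginning of this section. The second ingredient is the representation \eqref{representationFV} of $\partial_x X_t^x$ from \Cref{differentiable} (also with $t$ replaced by $s$, for $\partial_x X_s^x$).

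\textbf{Computation.} Abbreviate $\varphi(r):=\partial_2 b(r,X_r^x,\Pbb_{X_r^x})$ and $\beta(r):=\partial_x b(r,y,\Pbb_{X_r^x})\vert_{y=X_r^x}$, so that (suppressing the integration variable) $D_uX_t^x=e^{\int_u^t\varphi}$ and $\partial_x X_r^x=e^{\int_0^r\varphi}+\int_0^r e^{\int_v^r\varphi}\beta(v)\,dv$. Inserting $\partial_x X_s^x$ and $D_uX_t^x$, $u\in[s,t]$, into the right-hand side of \eqref{relationMallFV} yields
\[
D_sX_t^x\,\partial_x X_s^x+\int_s^t D_uX_t^x\,\beta(u)\,du
= e^{\int_s^t\varphi}\Bigl(e^{\int_0^s\varphi}+\int_0^s e^{\int_v^s\varphi}\beta(v)\,dv\Bigr)+\int_s^t e^{\int_u^t\varphi}\beta(u)\,du .
\]
Then I would use the cocycle identities $e^{\int_s^t\varphi}e^{\int_0^s\varphi}=e^{\int_0^t\varphi}$ and $e^{\int_s^t\varphi}e^{\int_v^s\varphi}=e^{\int_v^t\varphi}$ (valid for $0\le v\le s\le t$) and merge $\int_0^s+\int_s^t=\int_0^t$ in the remaining integral, which collapses the right-hand side to $e^{\int_0^t\varphi}+\int_0^t e^{\int_v^t\varphi}\beta(v)\,dv$; by \eqref{representationFV} this is exactly $\partial_x X_t^x$, proving the claim.

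\textbf{Obstacle.} There is essentially none. The only points requiring a word of justification are that $b^{\Pbb_X}$ inherits Lipschitz continuity in the space variable from $b\in C_b^{1,L}$, so that the exponential form of the Malliavin derivative is licit, and that every exponent occurring is an integral over a subinterval of $[0,t]$, so that the additivity of the integral used in the simplification is legitimate on the stated range $0\le s\le t\le T$. An alternative that avoids writing the exponentials at all: for fixed $s$, both $t\mapsto\partial_x X_t^x$ (by \eqref{implRepresentationFV}) and $t\mapsto D_sX_t^x\,\partial_x X_s^x+\int_s^t D_uX_t^x\,\beta(u)\,du$ solve on $[s,T]$ the linear ODE $\tfrac{d}{dt}Y_t=\varphi(t)Y_t+\beta(t)$ with the common value $\partial_x X_s^x$ at $t=s$ — here using $D_tX_t^x=1$ and $\tfrac{d}{dt}D_uX_t^x=\varphi(t)D_uX_t^x$ — hence they coincide by uniqueness for linear ODEs.
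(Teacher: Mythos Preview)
Your proof is correct and is precisely the computation the paper has in mind: it states the corollary as an immediate consequence of \Cref{differentiable} (giving \eqref{representationFV}) and the exponential representation of $D_sX_t^x$, without writing out any details. You have simply spelled out this ``immediate consequence'' via the cocycle identity for the exponentials (or, equivalently, via ODE uniqueness), which matches the paper's intended argument.
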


Now let $b$ be a general drift coefficient that allows for a decomposition $b=\btilde + \bhat$ as in \eqref{formDrift} and is uniformly Lipschitz continuous in the third variable \eqref{eq:uniformLipschitzThird}. Let $(X_t^x)_{t\in[0,T]}$ be the corresponding strong solution of \eqref{mainMcKeanVlasov} ascertained by \Cref{strongSolution}. In order to extend \Cref{differentiable} we apply a compactness criterion to an approximating sequence of weakly differentiable mean-field SDEs. 
By standard approximation arguments there exists a sequence of approximating drift coefficients
\begin{align}\label{approximatingDrift}
	b_n:= \btilde_n + \bhat, \quad n\geq 1,
\end{align}
where $\btilde_n \in \Ccal_b^{1,L}( \Rbb \times \Pcal_1(\Rbb))$ with $\sup_{n\geq 1} \Vert \btilde_n\Vert_{\infty} \leq C < \infty$ such that $b_n \to b$ in \linebreak${(t,y,\mu) \in [0,T] \times \Rbb \times \Pcal_1(\Rbb)}$ a.e.~with respect to the Lebesgue measure. Furthermore, we denote $b_0 :=b$ and choose the approximating coefficients $b_n$ such that they fulfill the uniform Lipschitz continuity in the third variable \eqref{eq:uniformLipschitzThird} uniformly in $n\geq 0$. Under these conditions the corresponding mean-field SDEs, defined by
\begin{align}\label{approximatingMFSDEExp}
	dX_t^{n,x} = b_n(t,X_t^{n,x}, \Pbb_{X_t^{n,x}}) dt + dB_t, \quad X_0^{n,x} = x \in \Rbb, \quad t\in[0,T], \quad n\geq 1,
\end{align}
have unique strong solutions which are Malliavin differentiable by \Cref{strongSolution}. Likewise the strong solutions $\lbrace X^{n,x}\rbrace_{n\geq 1}$ are weakly differentiable with respect to the initial condition by \Cref{differentiable}. In the next step we verify that $(X_t^{n,x})_{t\in[0,T]}$ converges to $(X_t^x)_{t\in[0,T]}$ in $L^2(\Omega,\Fcal_t)$ as $n\to \infty$.

\begin{theorem}\label{convergenceApproximating}
	Suppose the drift coefficient $b$ is in the decomposable form \eqref{formDrift} and uniformly Lipschitz continuous in the third variable \eqref{eq:uniformLipschitzThird}. Let $(X_t^x)_{t\in[0,T]}$ be the unique strong solution of \eqref{mainMcKeanVlasov}. Furthermore, $\lbrace b_n \rbrace_{n\geq 1}$ is the approximating sequence of $b$ as defined in \eqref{approximatingDrift} and $(X_t^{n,x})_{t\in[0,T]}$, $n\geq 1$, the corresponding unique strong solutions of \eqref{approximatingMFSDEExp}. Then, there exists a subsequence $(n_k)\subset \Nbb$  such that 
\begin{align*}
	X_t^{n_k,x} \xrightarrow[k \to \infty]{L^2(\Omega, \Fcal_t)} X_t^x, \quad t\in[0,T].
\end{align*}
\end{theorem}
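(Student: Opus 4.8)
The plan is to exploit the Girsanov/change-of-measure structure already used throughout the paper and reduce the convergence to a statement about the Doléan-Dade densities. First I would fix a single stochastic basis on which $X^{n,x}_t = x + B_t + \int_0^t b_n(s,X^{n,x}_s,\Pbb_{X^{n,x}_s})\,ds$ and $X^x_t = x + B_t + \int_0^t b(s,X^x_s,\Pbb_{X^x_s})\,ds$; equivalently, after a Girsanov transform, on the canonical space where $X^{n,x}$ (resp. $X^x$) is a Brownian motion $B^x$ started at $x$ under the measure $\Qbb$ with density $\Ecal(-\int_0^T b_n(t,B^x_t,\Pbb_{X^{n,x}_t})\,dB^x_t)$ (resp. with $b$ and $\Pbb_{X^x_t}$). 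The key point is that, writing the solutions as functionals of $B^x$, we have the representation $X^{n,x}_t - X^x_t = \int_0^t\!\big(b_n(s,X^{n,x}_s,\Pbb_{X^{n,x}_s}) - b(s,X^x_s,\Pbb_{X^x_s})\big)\,ds$, and the plan is to bound the $L^2(\Omega)$ norm of this quantity.

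Second, I would split the integrand into three pieces: $b_n(s,X^{n,x}_s,\Pbb_{X^{n,x}_s}) - b_n(s,X^{n,x}_s,\Pbb_{X^x_s})$, which is controlled by $C\,\Kcal(\Pbb_{X^{n,x}_s},\Pbb_{X^x_s}) \le C\,\Ebb[|X^{n,x}_s - X^x_s|]$ using the uniform Lipschitz condition \eqref{eq:uniformLipschitzThird}; the term $b_n(s,X^{n,x}_s,\Pbb_{X^x_s}) - b_n(s,X^x_s,\Pbb_{X^x_s})$, which involves the difference of the (irregular, only bounded) drift $\btilde_n + \bhat$ evaluated at two different points and is the genuinely hard term; and $b_n(s,X^x_s,\Pbb_{X^x_s}) - b(s,X^x_s,\Pbb_{X^x_s})$, which tends to $0$ a.e.\ in $(s,\omega)$ by construction of the approximating sequence and is handled by dominated convergence (using the uniform bound $\sup_n\|\btilde_n\|_\infty\le C$ together with linear growth to dominate). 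For the problematic middle term, the standard device in this circle of ideas (as in \cite{MeyerBrandisBanosDuedahlProske_ComputingDeltas} and \cite{MenoukeuMeyerBrandisNilssenProskeZhang_VariationalApproachToTheConstructionOfStrongSolutions}) is to pass to the measure $\Qbb$ under which the process is a Brownian motion, use Hölder to detach the Radon--Nikodym density (whose $(1+\varepsilon)$-moments are uniformly bounded in $n$ by \Cref{boundMeasureChange}-type estimates), and then estimate $\Ebb_\Qbb\big[\big(\int_0^t |g_n(s,B^x_s)|\,ds\big)^{2}\big]$ for bounded $g_n$; a local-time / Khasminskii-type argument or the $L^p$--$L^\infty$ smoothing inequality for the Gaussian kernel shows this is dominated by $\|g_n\|_\infty$ times a deterministic bound, hence one keeps control even though $g_n$ does not converge pointwise.

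Third, once the three pieces are assembled I would arrive at an estimate of the form
\begin{align*}
	\Ebb\big[|X^{n,x}_t - X^x_t|^2\big] \lesssim a_n + \int_0^t \Ebb\big[|X^{n,x}_s - X^x_s|^2\big]\,ds,
\end{align*}
where $a_n \to 0$ as $n\to\infty$ (after possibly passing to a subsequence so that the a.e.\ convergence of $b_n$ can be upgraded); Grönwall's inequality then yields $\sup_{t\in[0,T]}\Ebb[|X^{n_k,x}_t - X^x_t|^2] \to 0$, which is the claim.

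\textbf{Main obstacle.} The crux is the middle term $b_n(s,X^{n,x}_s,\Pbb_{X^x_s}) - b_n(s,X^x_s,\Pbb_{X^x_s})$: since $b_n = \btilde_n + \bhat$ contains the merely bounded, non-Lipschitz part $\bhat$, one cannot simply invoke Lipschitz continuity in the space variable, and this is exactly where the compactness/Malliavin machinery and the change-of-measure plus Gaussian-smoothing estimates are needed; the necessity of extracting a subsequence also stems from there. Everything else is a Grönwall argument together with dominated convergence, using the uniform linear-growth and boundedness bounds that are available from \Cref{strongSolution} and the construction \eqref{approximatingDrift}.
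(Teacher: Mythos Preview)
Your Grönwall strategy has a genuine gap at exactly the point you flag. The middle term $b_n(s,X^{n,x}_s,\Pbb_{X^x_s}) - b_n(s,X^x_s,\Pbb_{X^x_s})$ is a difference of the irregular drift evaluated along two \emph{different} paths. Passing to a measure under which \emph{one} of them is Brownian does not turn the other into Brownian motion, so the ``$\Ebb_\Qbb[(\int_0^t|g_n(s,B^x_s)|\,ds)^2]$'' expression you write down is not the quantity you need to bound. Even if it were, a Khasminskii/smoothing estimate would only give a bound of the form $C\|g_n\|_\infty^2$, which is a uniform constant and neither small in $n$ nor of the form $\int_0^t \Ebb[|X^{n,x}_s-X^x_s|^2]\,ds$. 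Consequently the inequality $\Ebb[|X^{n,x}_t-X^x_t|^2]\lesssim a_n+\int_0^t\Ebb[|X^{n,x}_s-X^x_s|^2]\,ds$ cannot be closed: the middle term contributes neither an $a_n\to 0$ nor a Grönwall-feedable integral. (Note also that the references you invoke for this ``standard device'' in fact proceed by compactness, not by a direct estimate of this kind.)

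The paper avoids this difficulty entirely by taking a different route. It first uses the Malliavin-derivative bounds of \cite{MeyerBrandisBanosDuedahlProske_ComputingDeltas} (via \Cref{derivativeMalliavin}) to obtain, for each fixed $t$, relative compactness of $\{X^{n,x}_t\}_{n\ge1}$ in $L^2(\Omega,\Fcal_t)$. Then, to extract a $t$-independent subsequence, it embeds the solutions into $\Ccal([0,T];\Scal^*)$ (Hida distribution space) and applies Mitoma's theorem together with an equicontinuity estimate. Finally, it identifies the limit $Y$ by showing (in the subsequent lemma, via the Wiener transform) that $X^{n,x}_t$ converges weakly in $L^2$ to the unique strong solution $\overline{X}^x$ of the SDE with drift $b(\cdot,\cdot,\Pbb_{Y_\cdot})$; uniqueness of limits forces $Y=\overline{X}^x$ and hence $\overline{X}^x=X^x$. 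No Grönwall-on-the-difference is ever attempted, precisely because the irregular part of $b$ makes such an estimate unavailable.
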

\begin{proof}
	In the case of SDEs it is shown in \cite{MeyerBrandisBanosDuedahlProske_ComputingDeltas}[Theorem A.4] that for every $t\in [0,T]$, the sequence $\lbrace X_t^{n,x} \rbrace_{n\geq 1}$ is relatively compact in $L^2(\Omega,\Fcal_t)$. The proof therein can be extended to the assumptions of \Cref{convergenceApproximating} and the case of mean-field SDEs due to \Cref{derivativeMalliavin}. Consequently, for every $t\in[0,T]$ there exists a subsequence $\lbrace n_k(t)\rbrace_{k\geq 1} \subset \Nbb$ such that $X_t^{n_k(t),x}$ converges to some $Y_t$ strongly in $L^2(\Omega,\Fcal_t)$. We need to show that the converging subsequence can be chosen independent of $t$. To this end we consider the Hida test function space $\Scal$ and the Hida distribution space $\Scal^*$ as defined in \Cref{HidaSpaces} and prove that $\lbrace t \mapsto X_t^{n,x} \rbrace_{n\geq 1}$ is relatively compact in $\Ccal([0,T];\Scal^*)$, which is well-defined since 
\begin{align*}
	\Scal \subset L^2(\Omega) \subset \Scal^*.
\end{align*}	 
In order to show this, we use \Cref{mitoma} and show instead that $\lbrace t \mapsto X_t^{n,x}[\phi] \rbrace_{n\geq 1}$ is relatively compact in $\Ccal([0,T];\Rbb)$ for any $\phi \in \Scal$, where $X_t^{n,x}[\phi] := \Ebb[X_t^{n,x} \phi]$. Since $X^{n,x}$ is a solution of \eqref{approximatingMFSDEExp}, using Cauchy-Schwarz' inequality and \Cref{boundsSolution} yields
\begin{align*}
	&\left\vert X_t^{n,x}[\phi] - X_s^{n,x}[\phi] \right\vert = \left\vert \Ebb[(X_t^{n,x} - X_s^{n,x})\phi] \right\vert \\
	& \quad = \left\vert \Ebb\left[ \left(\int_s^t b_n(u,X_u^{n,x}, \Pbb_{X_u^{n,x}}) du + B_t-B_s \right)\phi\right] \right\vert \\
	& \quad \leq \left( \int_s^t \Ebb[b_n(u,X_u^{n,x}, \Pbb_{X_u^{n,x}})^2]^{\frac{1}{2}} du  + |t-s| \right) \Vert \phi \Vert_{L^2(\Omega)} \leq C|t-s|.
\end{align*}
Hence, $\lbrace t \mapsto X_t^{n,x}[\phi] \rbrace_{n\geq 1}$ is relatively compact in $\Ccal([0,T];\Rbb)$ by \Cref{relativeCompactModulus}. Since $\phi$ was arbitrary, we have proven that $\lbrace t \mapsto X_t^{n,x} \rbrace_{n\geq 1}$ is relatively compact in $\Ccal([0,T];\Scal^*)$, i.e. there exists a subsequence $(n_k)_{k\geq 1}$ and $\lbrace t \mapsto Z_t \rbrace \in \Ccal([0,T];\Scal^*)$ such that
\begin{align}\label{convergenceFunction}
	\lbrace t \mapsto X_t^{n_k,x} \rbrace \xrightarrow[k\to\infty]{} \lbrace t \mapsto Z_t \rbrace
\end{align}
 in $\Ccal([0,T];\Scal^*)$. Furthermore, we have shown that for every $t\in[0,T]$ there exists a subsequence ${(n_{k_m}(t))_{m\geq 1}\subset (n_k)_{k\geq 1}}$ such that in $L^2(\Omega,\Fcal_t)$,
\begin{align*}
	X_t^{n_{k_m}(t),x} \xrightarrow[m\to\infty]{} Y_t.
\end{align*}
Note that for every $t\in[0,T]$, we get by \eqref{convergenceFunction}
\begin{align*}
	X_t^{n_{k_m}(t),x} \xrightarrow[m\to\infty]{} Z_t
\end{align*}
in $\Scal^*$. By uniqueness of the limit $Y_t = Z_t$ for every $t\in[0,T]$ and hence, the convergence in $L^2(\Omega,\Fcal_t)$ holds for the $t$ independent subsequence $(n_k)_{k\geq 1}$. \\
In the last step, which is deferred to the subsequent lemma, we show for all $t\in[0,T]$ that $X_t^{n,x}$ converges weakly in $L^2(\Omega,\Fcal_t)$ to the unique strong solution $\overline{X}_t^x$ of SDE
\begin{align}\label{approximatingMFSDEExp2}
	d\overline{X}_t^x = b(t,\overline{X}_t^x, \Pbb_{Y_t}) dt + dB_t, \quad \overline{X}_0^x = x \in \Rbb,\quad t\in[0,T].
\end{align}
Consequently, $X_t^{n,x}$ converges to $X_t^x$ in $L^2(\Omega, \Fcal_t)$. Indeed, we have shown that $X_t^{n,x}$ converges in $L^2(\Omega,\Fcal_t)$ to $Y_t$ for all $t\in[0,T]$. Moreover $X_t^{n,x}$ converges weakly in $L^2(\Omega, \Fcal_t)$ to $\overline{X}_t^x$ for all $t\in[0,T]$. Hence, by uniqueness of the limit, $Y_t \stackrel{d}{=} \overline{X}_t^x$ for all $t\in [0,T]$. Thus \eqref{approximatingMFSDEExp2} is identical to \eqref{mainMcKeanVlasov} and we can write $X=\overline{X}$, which shows \Cref{convergenceApproximating}.
\end{proof}

In the following we assume without loss of generality that the whole sequence $\{ X_t^{n,x} \}_{n\geq 1}$ converges to $X_t^{x}$ strongly in $L^2(\Omega,\Fcal_t)$ for every $t\in[0,T]$. Moreover, we define the weight function $\omega_T: \Rbb \to \Rbb$ by
\begin{align}\label{weightFunction}
	\omega_T(y) := \exp \left\lbrace - \frac{|y|^2}{4T} \right\rbrace, \quad y \in \Rbb.
\end{align}

\begin{lemma}\label{weakConvergence}
	Suppose the drift coefficient $b$ is in the decomposable form \eqref{formDrift} and uniformly Lipschitz continuous in the third variable \eqref{eq:uniformLipschitzThird}. Let $(X_t^x)_{t\in[0,T]}$ be the unique strong solution of \eqref{mainMcKeanVlasov}. Furthermore, $\lbrace b_n \rbrace_{n\geq 1}$ is the approximating sequence of $b$ as defined in \eqref{approximatingDrift} and $(X_t^{n,x})_{t\in[0,T]}$, $n\geq 1$, the corresponding unique strong solutions of \eqref{approximatingMFSDEExp}. Then, for every $t \in [0,T]$ and function $\phi \in L^{2p}(\Rbb; \omega_T)$ with $p:= \frac{1+\varepsilon}{\varepsilon}$, $\varepsilon>0$ sufficiently small with regard to \Cref{boundsSolution},
\begin{align*}
	\phi(X_t^{n,x}) \xrightarrow[n\to \infty]{} \phi(X_t^x)
\end{align*}
weakly in $L^2(\Omega,\Fcal_t)$.
\end{lemma}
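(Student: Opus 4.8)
The plan is to combine a uniform-in-$n$ $L^2$-bound, which gives relative weak compactness of $\{\phi(X_t^{n,x})\}_n$ in $L^2(\Omega,\Fcal_t)$, with an identification of the weak limit via the Wiener transform. First I would establish, for every $\psi\in L^{2p}(\Rbb;\omega_T)$, the bound $\sup_{n\geq 1}\Ebb[|\psi(X_t^{n,x})|^2]\lesssim\Vert\psi\Vert_{L^{2p}(\Rbb;\omega_T)}^2$ with implicit constant independent of $n$; fix $t\in(0,T]$, the case $t=0$ being trivial. By \Cref{equivalentBrownianMeasurePbb} the density $\Ecal(-\int_0^t b_n(s,X_s^{n,x},\Pbb_{X_s^{n,x}})dB_s)$ induces an equivalent measure $\Qbb^n$ under which $X^{n,x}$ is a Brownian motion started at $x$. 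Applying Hölder's inequality with conjugate exponents $p=\tfrac{1+\varepsilon}{\varepsilon}$ and $1+\varepsilon$, the $\Qbb^n$-expectation of $|\psi(X_t^{n,x})|^{2p}$ becomes the Gaussian integral $\int_\Rbb|\psi(z)|^{2p}(2\pi t)^{-1/2}e^{-(z-x)^2/(2t)}dz$, and since $t\leq T$ and $(z-x)^2\geq\tfrac12 z^2-x^2$ the Gaussian kernel is dominated by a constant depending only on $x,t$ times $\omega_T(z)$; thus this factor is $\lesssim\Vert\psi\Vert_{L^{2p}(\Rbb;\omega_T)}^2$, and more generally $\Ebb[|\psi(B_t^x)|^q]\lesssim\Vert\psi\Vert_{L^{2p}(\Rbb;\omega_T)}^q$ for $q\leq 2p$. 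The companion factor, the $(1+\varepsilon)$-moment of $\Ecal(\int_0^t b_n(s,B_s^x,\Pbb_{X_s^{n,x}})dB_s)$, is bounded uniformly in $n$ by \Cref{boundsSolution}, the uniformity coming from the construction of the $b_n$ (uniform linear growth, $\sup_n\Vert\btilde_n\Vert_\infty<\infty$) and the uniform first-moment bound $\sup_{n,s}\Ebb[|X_s^{n,x}|]<\infty$ on the marginals. Taking $\psi=\phi$ this makes $\{\phi(X_t^{n,x})\}_n$ bounded, hence relatively weakly compact, in $L^2(\Omega,\Fcal_t)$.

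Next I would test against the total family of exponentials $\Ecal(\int_0^T f(s)dB_s)$, $f\in L^2([0,T])$. Writing $X^{n,x}$ as a $\Qbb^n$-Brownian motion, changing measure back to $\Pbb$, and completing the square in the exponent, I expect the identity
\begin{align*}
	\Wcal\big(\phi(X_t^{n,x})\big)(f)=\Ebb\Big[\phi(B_t^x)\,\Ecal\Big(\int_0^T\big(f(s)+b_n(s,B_s^x,\Pbb_{X_s^{n,x}})\big)dB_s\Big)\Big],
\end{align*}
valid for every $\phi\in L^{2p}(\Rbb;\omega_T)$ with $B$ a generic Brownian motion, and the analogous formula, with $b_n,\Pbb_{X_s^{n,x}}$ replaced by $b,\Pbb_{X_s^x}$, for $\Wcal(\phi(X_t^{x}))(f)$ (recall $X^x$ is itself a strong solution, \Cref{strongSolution}).

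Then I would pass to the limit on the right. Set $g_n(s):=f(s)+b_n(s,B_s^x,\Pbb_{X_s^{n,x}})$ and $g_\infty(s):=f(s)+b(s,B_s^x,\Pbb_{X_s^x})$. The standing assumption $X_s^{n,x}\to X_s^x$ in $L^2(\Omega,\Fcal_s)$ gives $\Kcal(\Pbb_{X_s^{n,x}},\Pbb_{X_s^x})\leq\Ebb[|X_s^{n,x}-X_s^x|]\to 0$; together with the uniform Lipschitz continuity \eqref{eq:uniformLipschitzThird} of $b_n$ in the third variable, the a.e.~convergence $b_n\to b$, and the absolute continuity of the law of $B_s^x$, this yields $g_n(s)\to g_\infty(s)$ for a.e.~$s$, $\Pbb$-a.s. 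Since $|g_n(s)|^2\lesssim 1+|f(s)|^2+|B_s^x|^2$ by \eqref{formDrift}, \eqref{linearGrowth} and the uniform first-moment bound, dominated convergence gives $g_n\to g_\infty$ in $L^2([0,T]\times\Omega)$, hence $\int_0^T g_n\,dB\to\int_0^T g_\infty\,dB$ in $L^2(\Omega)$ and $\int_0^T|g_n|^2ds\to\int_0^T|g_\infty|^2ds$ in $L^1(\Omega)$, so $\Ecal(\int_0^T g_n\,dB)\to\Ecal(\int_0^T g_\infty\,dB)$ in probability; since these are bounded in $L^{1+\varepsilon}$ uniformly in $n$ (\Cref{boundsSolution}), the convergence upgrades to $L^r(\Omega)$ for any $r<1+\varepsilon$. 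Picking $r$ with $r/(r-1)\leq 2p$, so that $\phi(B_t^x)\in L^{r/(r-1)}(\Omega)$ by the first step, Hölder's inequality yields $\Wcal(\phi(X_t^{n,x}))(f)\to\Wcal(\phi(X_t^{x}))(f)$. Finally, $Z\mapsto\Wcal(Z)(f)$ is a bounded linear functional on $L^2(\Omega,\Fcal_T)$ (as $\Ecal(\int_0^T f\,dB)\in L^2(\Omega)$ for deterministic $f$), so any weak subsequential limit $\xi$ of $\{\phi(X_t^{n,x})\}_n$ satisfies $\Wcal(\xi)(f)=\Wcal(\phi(X_t^{x}))(f)$ for all $f$; the exponentials being total in $L^2(\Omega,\Fcal_T)$ the Wiener transform is injective there, so $\xi=\phi(X_t^{x})$, and by relative weak compactness $\phi(X_t^{n,x})\rightharpoonup\phi(X_t^{x})$ in $L^2(\Omega,\Fcal_t)$.

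I expect the main obstacle to be the limit passage through the irregular drift: one must simultaneously exploit the a.e.~convergence $b_n\to b$, the convergence $\Kcal(\Pbb_{X_s^{n,x}},\Pbb_{X_s^x})\to 0$ of the marginal laws, and the Lipschitz-in-measure assumption \eqref{eq:uniformLipschitzThird} to obtain $g_n\to g_\infty$ pointwise, and then — against the merely $L^{2p}(\Rbb;\omega_T)$-integrable test function $\phi$ — promote this to convergence of the exponential weights, for which the uniform integrability supplied by \Cref{boundsSolution} is indispensable. A technically cleaner but less self-contained alternative for the identification step would be to approximate $\phi$ in $L^{2p}(\Rbb;\omega_T)$ by functions in $\Ccal_0^\infty(\Rbb)$, absorb the approximation error uniformly in $n$ with the bound from the first step, and dispatch the smooth case by dominated convergence from $X_t^{n,x}\to X_t^{x}$ in $L^2(\Omega,\Fcal_t)$.
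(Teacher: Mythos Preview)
Your proposal is correct and follows essentially the same route as the paper: reduce weak convergence in $L^2(\Omega,\Fcal_t)$ to convergence of Wiener transforms $\Wcal(\phi(X_t^{n,x}))(f)$, rewrite both sides via Girsanov as $\Ebb[\phi(B_t^x)\Ecal(\int_0^T g_n\,dB)]$, and then show the exponential weights converge by combining the Lipschitz-in-measure assumption \eqref{eq:uniformLipschitzThird} with the $L^2$-convergence $X_s^{n,x}\to X_s^x$ (hence $\Kcal(\Pbb_{X_s^{n,x}},\Pbb_{X_s^x})\to 0$) and the a.e.\ convergence $b_n\to b$.

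The only noteworthy differences are cosmetic. First, the paper bounds the difference of exponentials directly via the inequality $|e^y-e^z|\leq|y-z|(e^y+e^z)$ together with the $(1{+}\varepsilon)$-moment bound from \Cref{boundsSolution}, whereas you pass through convergence in probability of $\Ecal(\int_0^T g_n\,dB)$ and then upgrade via uniform integrability; both land on the same Hölder pairing with $\phi(B_t^x)\in L^{2p}$. Second, the paper phrases the target as $\phi(\overline{X}_t^x)$ with drift $b(t,\cdot,\Pbb_{Y_t})$, because this lemma is logically the deferred last step of \Cref{convergenceApproximating} (before $\overline{X}=X$ is known); since the statement is placed after the standing assumption $X_t^{n,x}\to X_t^x$ in $L^2(\Omega,\Fcal_t)$, your direct identification with $\phi(X_t^x)$ is legitimate and slightly cleaner. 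Your explicit relative-weak-compactness step is something the paper uses implicitly (boundedness in $L^2$ is needed so that convergence on the total family of exponentials implies weak convergence), so making it explicit is a plus.
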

\begin{proof}
As described in the proof of \Cref{convergenceApproximating} it suffices to show for all $t\in[0,T]$ that $\phi(X_t^{n,x})$ converges weakly to $\phi(\overline{X}_t^x)$, where $\overline{X}_t^x$ is the unique strong solution of SDE \eqref{approximatingMFSDEExp2}. This can be shown equivalently to \cite{MeyerBrandisBanosDuedahlProske_ComputingDeltas}[Lemma A.3]. First note that $\phi(X_t^{n,x}), \phi(\overline{X}_t^x) \in L^2(\Omega,\Fcal_t)$, $n\geq 0$. Hence, in order to show weak convergence it suffices to show that
\begin{align*}
	\Wcal(\phi(X_t^{n,x}))(f) \xrightarrow[n\to\infty]{} \Wcal(\phi(\overline{X}_t^x))(f),
\end{align*}
for every $f\in L^2([0,T])$. One can show by Hölder's inequality, inequality \eqref{exponentialInequality} and \Cref{boundsSolution} that
\begin{align*}
	&\left| \Wcal(\phi(X_t^{n,x}))(f) - \Wcal(\phi(\overline{X}_t^x))(f) \right| = \\
	&\lesssim \Ebb \left[ \left( \Ecal\left( \int_0^T b_n(s,B_s^x,\Pbb_{X_s^{n,x}}) + f(s) dB_s \right) - \Ecal\left( \int_0^T b(s,B_s^{x},\Pbb_{Y_s}) + f(s) dB_s \right) \right)^{q} \right]^{\frac{1}{q}} \\
	&\lesssim A_n,
\end{align*}
where $q:= \frac{2(1+\varepsilon)}{2+\varepsilon}$ and
\begin{align*}
	A_n &:= \Ebb \left[ \left(\int_0^T \left( b_n(s,B_s^{x},\Pbb_{X_s^{n,x}}) - b(s,B_s^{x},\Pbb_{Y_s}) \right) dB_s\right. \right.\\
	&\quad \left.\left.- \frac{1}{2} \int_0^T \left( (b_n(s,B_s^{x},\Pbb_{X_s^{n,x}})+f(s))^2 - (b(s,B_s^{x},\Pbb_{Y_s})+f(s))^2 \right) ds \right)^{2p}\right]^{\frac{1}{2p}}.
\end{align*}
Using Minkowski's inequality and Burkholder-Davis-Gundy's inequality yields
\begin{align*}
	A_n &\leq\Ebb \left[ \left|\int_0^T b_n(s,B_s^{x},\Pbb_{X_s^{n,x}}) - b(s,B_s^{x},\Pbb_{Y_s})  dB_s\right|^{2p}\right]^{\frac{1}{2p}} \\
	&\quad + \Ebb \left[ \left|\frac{1}{2} \int_0^T (b_n(s,B_s^{x},\Pbb_{X_s^{n,x}})+f(s))^2 - (b(s,B_s^{x},\Pbb_{Y_s})+f(s))^2 ds\right|^{2p} \right]^{\frac{1}{2p}} \\
	&\lesssim\Ebb \left[ \left(\int_0^T \left| b_n(s,B_s^{x},\Pbb_{X_s^{n,x}}) - b(s,B_s^{x},\Pbb_{Y_s}) \right|^{2} ds\right)^p \right]^{\frac{1}{2p}} \\
	&\quad + \Ebb \left[ \left( \int_0^T \left| (b_n(s,B_s^{x},\Pbb_{X_s^{n,x}})+f(s))^2 - (b(s,B_s^{x},\Pbb_{Y_s})+f(s))^2 \right| ds\right)^{2p} \right]^{\frac{1}{2p}} \\
	&=: D_n + E_n.
\end{align*}
Looking at the first summand, we see using the triangle inequality that
\begin{align*}
	D_n &= \Ebb \left[ \left(\int_0^T \left| b_n(s,B_s^{x},\Pbb_{X_s^{n,x}}) - b(s,B_s^{x},\Pbb_{Y_s}) \right|^{2} ds\right)^p \right]^{\frac{1}{2p}} \\
	&\leq \Ebb \left[ \left(\int_0^T \left| b_n(s,B_s^{x},\Pbb_{X_s^{n,x}}) - b_n(s,B_s^{x},\Pbb_{Y_s}) \right|^{2} ds\right)^p \right]^{\frac{1}{2p}}\\
	&\quad + \Ebb \left[ \left(\int_0^T \left| b_n(s,B_s^{x},\Pbb_{Y_s}) - b(s,B_s^{x},\Pbb_{Y_s}) \right|^{2} ds\right)^p \right]^{\frac{1}{2p}}.
\end{align*}
Since there exists a constant $C>0$ such that $(\mu \mapsto b_n(t,y,\mu)) \in \Lip_C(\Pcal_1(\Rbb))$ for all $n\geq 0$, $t\in[0,T]$, $y \in\Rbb$ and $X_s^{n,x} \xrightarrow[n\to\infty]{L^2(\Omega,\Fcal_s)} Y_s$ for all $s\in[0,T]$ by the proof of \Cref{convergenceApproximating}, we get by dominated convergence that $D_n$ converges to $0$ as $n\to\infty$. Equivalently one can show that also $E_n$ converges to $0$ as $n$ tends to infinity. Therefore $\left| \Wcal(\phi(X_t^{n,x}))(f) - \Wcal(\phi(\overline{X}_t^x))(f) \right|$ converges to $0$ as $n\to \infty$ and the claim holds.	
\end{proof}

The following lemma will be used in the application of the compactness argument in the proof of \Cref{solutionInSobolev}.

\begin{lemma}\label{pNormDerivative}
Let $\lbrace (X_t^{n,x})_{t\in[0,T]} \rbrace_{n\geq 1}$ be the unique strong solutions of \eqref{approximatingMFSDEExp}. Then, for any compact subset $K \subset \Rbb$ and $p\geq 2$,
\begin{align*}
	\sup_{n\geq 1} \sup_{t\in[0,T]} \esssup_{x\in K} \Ebb\left[ \left| \partial_x X_t^{n,x} \right|^p \right] \leq C,
\end{align*}
for some constant $C>0$.
\end{lemma}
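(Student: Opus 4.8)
The plan is to derive a Gr\"onwall-type estimate for $\Ebb[|\partial_x X_t^{n,x}|^p]$ that is uniform in $n$ and in $x$ over the compact set $K$, starting from the implicit representation \eqref{implRepresentationFV} of the first variation process applied to the approximating equation \eqref{approximatingMFSDEExp}. Writing $Z_t^{n,x} := \partial_x X_t^{n,x}$, this representation reads
\begin{align*}
	Z_t^{n,x} = 1 + \int_0^t \partial_2 b_n(s,X_s^{n,x},\Pbb_{X_s^{n,x}}) Z_s^{n,x}\, ds + \int_0^t \partial_x b_n(s,y,\Pbb_{X_s^{n,x}})\vert_{y=X_s^{n,x}}\, ds.
\end{align*}
The difficulty is that $\partial_2 b_n$ need not be uniformly bounded in $n$ — only $\|\btilde_n\|_\infty$ is, while $\btilde_n$ is merely $\Ccal^{1,1}_{b,C}$ in the second variable with the bound $C$ controlling $\|\partial_2\btilde_n\|_\infty$ \emph{and} the Lipschitz constant of $\partial_2\btilde_n$, not necessarily uniformly in $n$. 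So one should \emph{not} expect a naive Gr\"onwall bound on the ODE directly; instead I would use the local-time representation from \Cref{differentiable} together with \Cref{derivativeMalliavin}, exactly as in \Cref{derivativesRegular}, to rewrite
\begin{align*}
	Z_t^{n,x} = D_0 X_t^{n,x} + \int_0^t D_u X_t^{n,x}\, \partial_x b_n(u,y,\Pbb_{X_u^{n,x}})\vert_{y=X_u^{n,x}}\, du,
\end{align*}
where $D_u X_t^{n,x} = \exp\{-\int_u^t\int_\Rbb b_n(r,y,\Pbb_{X_r^{n,x}}) L^{X^{n,x}}(dr,dy)\}$ by \Cref{derivativeMalliavin}. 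This is the key move: the local-time integral expression for the Malliavin derivative has moments that are controlled purely in terms of $\sup_n\|\btilde_n\|_\infty$ (bounded part) plus the linear-growth data of the smooth part, by the machinery of \cite{MeyerBrandisBanosDuedahlProske_ComputingDeltas}, rather than in terms of $\partial_2 b_n$.

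The key steps, in order, are: (i) rewrite $\partial_x X_t^{n,x}$ via the Malliavin-derivative identity \eqref{relationMallFV} at $s=0$, valid for each $n\ge 1$ since $b_n\in\Ccal^{1,L}_b$; (ii) bound $\Ebb[|D_u X_t^{n,x}|^p]$ uniformly in $n$, $u\le t$, and $x\in K$ — here I would invoke the local-time estimates of \cite{MeyerBrandisBanosDuedahlProske_ComputingDeltas}, noting that the integrand $b_n$ splits as $\bhat + \btilde_n$ with $\bhat$ fixed and bounded and $\sup_n\|\btilde_n\|_\infty\le C$, and that the linear-growth constant in \eqref{eq:uniformLipschitzThird}/\eqref{linearGrowth} for $b_n$ is uniform in $n$ by construction, which also controls $\Kcal(\Pbb_{X^{n,x}_r},\delta_0)$ uniformly via \Cref{boundsSolution}; (iii) bound $|\partial_x b_n(u,y,\Pbb_{X_u^{n,x}})\vert_{y=X_u^{n,x}}|$: this is the $x$-derivative of the map $x\mapsto b_n(u,y,\Pbb_{X_u^{n,x}})$, which by the chain rule involves the Lipschitz constant of $\mu\mapsto b_n(u,y,\mu)$ (uniformly $C$ by the choice of approximants) times $\partial_x\Kcal(\Pbb_{X_u^{n,x}},\cdot)$-type quantities bounded by $\Ebb[|\partial_x X_u^{n,x}|]$ as in \eqref{measureLipschitz}; (iv) combine via H\"older's and Minkowski's inequalities and Gr\"onwall's lemma to close the estimate on $g_n(t):=\sup_{x\in K}\Ebb[|\partial_x X_t^{n,x}|^p]$, obtaining $g_n(t)\le C_1 + C_2\int_0^t g_n(s)\,ds$ with constants independent of $n$.

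The main obstacle I anticipate is step (iii): making precise that the ``law-derivative'' term $\partial_x b_n(u,\cdot,\Pbb_{X_u^{n,x}})$ is controlled by $\Ebb[|\partial_x X_u^{n,x}|]$ uniformly in $n$, without invoking genuine differentiability with respect to a measure. Following the paper's convention (see the Remark after \Cref{differentiable}), one treats $x\mapsto b_n(u,y,\Pbb_{X_u^{n,x}})$ as a real Sobolev-differentiable function and uses that its weak derivative is dominated a.e.\ by $C\cdot \partial_x(\text{something})$, where the $\Kcal$-Lipschitz bound \eqref{eq:uniformLipschitzThird} gives $|\partial_x b_n(u,y,\Pbb_{X_u^{n,x}})| \le C\, \Ebb[|\partial_x X_u^{n,x}|]$ in the weak sense — this is exactly the infinitesimal form of \eqref{measureLipschitz} and requires a short approximation/disintegration argument. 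Once this bound is in hand, everything else is a routine application of the moment estimates already established (\Cref{boundsSolution}, \Cref{derivativeMalliavin}) and Gr\"onwall's inequality, and the uniformity over the compact set $K$ is automatic because none of the constants produced depend on the particular $x$, only on $\sup_{x\in K}|x|$ through \Cref{boundsSolution}.
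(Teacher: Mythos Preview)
Your plan is correct and essentially identical to the paper's proof: the paper also starts from the identity $\partial_x X_t^{n,x} = D_0X_t^{n,x} + \int_0^t D_uX_t^{n,x}\,\partial_x b_n(u,y,\Pbb_{X_u^{n,x}})\vert_{y=X_u^{n,x}}\,du$, bounds $\Ebb[|D_uX_t^{n,x}|^p]$ uniformly in $n,u,t,x$ via the local-time representation and \Cref{boundLocalTime}, controls the law-derivative term by the $\Kcal$-Lipschitz constant times $\esssup_x \Ebb[|\partial_x X_u^{n,x}|]$ (writing the derivative as a difference quotient and estimating $\Kcal(\Pbb_{X_u^{n,x}},\Pbb_{X_u^{n,x_0}})$ by the fundamental theorem of calculus along the segment $[x,x_0]$), and closes with Gr\"onwall. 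Two technical refinements to be aware of when you write it out: the paper takes the $\esssup$ over the closed convex hull $\overline{\mathrm{conv}(K)}$ rather than $K$ itself, since the FTC argument produces intermediate points $x+\tau(x_0-x)$ that need not lie in $K$; and before applying Gr\"onwall the paper checks that $t\mapsto\esssup_x \Ebb[|\partial_x X_t^{n,x}|^p]^{1/p}$ is finite and integrable for each fixed $n$ (via the $n$-dependent bound from \eqref{measureLipschitz}), then invokes Jones' generalized Gr\"onwall inequality for merely integrable integrands.
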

\begin{proof}
By Theorem \ref{derivativesRegular}, we have
\begin{align}\label{eq:repPartialDerivative}
	\partial_x X_t^{n,x} = D_0X_t^{n,x} + \int_0^t D_uX_t^{n,x} \partial_x b_n(u,y,\Pbb_{X_u^{n,x}})\vert_{y=X_u^{n,x}} du.
\end{align}
Using \Cref{derivativeMalliavin} as well as Girsanov's theorem and Hölder's inequality with $q := \frac{1+\varepsilon}{\varepsilon}$, $\varepsilon>0$ sufficiently small with regard to \Cref{boundsSolution}, yields together with \Cref{boundLocalTime} that
\begin{align}\label{boundMalliavinPNorm}
\begin{split}
	\Ebb &\left[ \left| D_sX_t^{n,x} \right|^p \right] = \Ebb\left[ \exp\left\lbrace -p \int_s^t \int_{\Rbb} b_n(u,y,\Pbb_{X_u^{n,x}}) L^{X^x}(du,dy) \right\rbrace \right] \\
	&\lesssim \Ebb\left[ \exp\left\lbrace -qp \int_s^t \int_{\Rbb} b_n(u,y,\Pbb_{X_u^{n,x}}) L^{B^x}(du,dy) \right\rbrace \right]^{\frac{1}{q}} \leq C_1,
\end{split}
\end{align}
for some constant $C_1>0$ independent of $n\geq 0$, $x\in K$ and $s,t\in [0,T]$. Hence, we get for every $n\geq 1$ and almost every $x \in K$ with Minkowski's and Hölder's inequality using that $(\mu \mapsto b(t,y,\mu)) \in \Lip_C(\Pcal_1(\Rbb))$ for every $t\in[0,T]$ and $y\in\Rbb$ that

\begin{align}\label{eq:boundDerivativeFirstStage}
\begin{split}
	\Ebb&\left[ \left| \partial_x X_t^{n,x} \right|^p \right]^{\frac{1}{p}} = \Ebb\left[ \left| D_0X_t^{n,x} + \int_0^t D_uX_t^{n,x} \partial_x b_n(u,y,\Pbb_{X_u^{n,x}})\vert_{y=X_u^{n,x}} du \right|^p \right]^{\frac{1}{p}} \\
	&\leq \sup_{0\leq u \leq T} \Ebb\left[ \left| D_uX_t^{n,x} \right|^{2p} \right]^{\frac{1}{2p}} \left( 1 + \Ebb\left[ \left( \int_0^t \left| \partial_x b_n\left(u,y,\Pbb_{X_u^{n,x}}\right) \right|_{y=X_u^{n,x}} du \right)^{2p} \right]^{\frac{1}{2p}} \right)\\
	&\lesssim 1 + \Ebb\left[ \left( \int_0^t \left| \lim_{x_0\to x} \frac{b_n\left(u,X_u^{n,x},\Pbb_{X_u^{n,x}} \right) - b_n\left(u,X_u^{n,x},\Pbb_{X_u^{n,x_0}} \right)}{|x-x_0|} \right| du \right)^{2p} \right]^{\frac{1}{2p}} \\
	&\lesssim 1 + \liminf_{x_0\to x} \frac{1}{|x-x_0|} \int_0^t \Kcal\left(\Pbb_{X_u^{n,x}}, \Pbb_{X_u^{n,x_0}} \right) du.
	\end{split}
\end{align}
Denote by $\overline{\text{conv}(K)}$ the closed convex hull of $K$ and note that $\overline{\text{conv}(K)}$ is again a compact set. Moreover, we can bound the Kantorovich metric of $\Pbb_{X_u^{n,x}}$ and $\Pbb_{X_u^{n,x_0}}$ for arbitrary $x,x_0 \in \overline{\text{conv}(K)}$ by using the second fundamental theorem of calculus and representation \eqref{representationFV}:
\begin{align}\label{eq:lipschitzExpecation}
\begin{split}
	\Kcal&\left(\Pbb_{X_u^{n,x}},\Pbb_{X_u^{n,x_0}} \right) \leq \Ebb\left[ \left| \int_0^u b_n(s,X_s^{n,x},\Pbb_{X_s^{n,x}}) - b_n(s,X_s^{n,x_0},\Pbb_{X_s^{n,x_0}}) ds \right|\right] \\
	&= |x-x_0| \Ebb\left[ \left|  \int_0^u \int_0^1 \partial_2 b_n\left(s,X_s^{n,x+\tau (x_0-x)}, \Pbb_{X_s^{n,x+\tau (x_0-x)}}\right) \partial_{\tau} X_s^{n,x+ \tau (x_0-x)} \right.\right.\\
		&\quad \left.\left.+ \partial_{\tau} b_n\left(s,z, \Pbb_{X_s^{n,x+\tau (x_0-x)}}\right)\vert_{z=X_s^{n,x+\tau (x_0-x)}} d\tau ds \right|\right] \\
	&\leq |x-x_0| \int_0^1 \Ebb\left[ \left|  \int_0^u  \partial_2 b_n\left(s,X_s^{n,x+\tau (x_0-x)}, \Pbb_{X_s^{n,x+\tau (x_0-x)}}\right) \partial_{\tau} X_s^{n,x+ \tau (x_0-x)}\right.\right. \\
	&\quad \left.\left.+ \partial_{\tau} b_n\left(s,z, \Pbb_{X_s^{n,x+\tau (x_0-x)}}\right)\vert_{z=X_s^{n,x+\tau (x_0-x)}}  ds \right|\right] d\tau \\ 
	&= |x-x_0| \int_0^1 \Ebb\left[ \left| \partial_{\tau} X_u^{n,x+ \tau (x_0-x)}  - (1-\tau) \right|\right] d\tau \\
	&\lesssim |x-x_0| + |x-x_0| \esssup_{x\in \overline{\text{conv}(K)}} \Ebb\left[ \left| \partial_x X_u^{n,x} \right|\right].
\end{split}
\end{align}
Putting all together we can find a constant $C_2>0$ independent of $n\geq 1$, $t\in[0,T]$ and $x\in \overline{\text{conv}(K)}$ such that 
\begin{align*}
	\esssup_{x\in \overline{\text{conv}(K)}} \Ebb\left[ \left| \partial_x X_t^{n,x} \right|^p\right]^{\frac{1}{p}} \leq C_2 + C_2 \int_0^t \esssup_{x\in \overline{\text{conv}(K)}} \Ebb\left[ \left| \partial_x X_u^{n,x} \right|^p\right]^{\frac{1}{p}} du.
\end{align*}
Note that by \eqref{eq:boundDerivativeFirstStage} and \eqref{measureLipschitz} we can find constants $C_3(n),C_4(n)>0$ for every $n\geq 1$ independent of $t\in[0,T]$ and $x\in \overline{\text{conv}(K)}$ such that 
\begin{align*}
	\Ebb\left[ \left| \partial_x X_t^{n,x} \right|^p\right]^{\frac{1}{p}} \leq C_3(n)\left(1 + \liminf_{x_0\to x} \frac{1}{|x-x_0|} \int_0^t \Kcal\left(\Pbb_{X_u^{n,x}}, \Pbb_{X_u^{n,x_0}} \right) du \right) \leq C_4(n) < \infty.
\end{align*}
Hence,  $t\mapsto \esssup_{x\in \overline{\text{conv}(K)}} \Ebb\left[ \left| \partial_x X_t^{n,x} \right|^p\right]^{\frac{1}{p}}$ is integrable over $[0,T]$. Since it is also Borel measurable, we can apply Jones' generalization of Grönwall's inequality \linebreak \cite{Jones_FundamentalInequalities}[Lemma 5] to get
\begin{align*}
	\esssup_{x\in K} \Ebb\left[ \left| \partial_x X_t^{n,x} \right|^p\right]^{\frac{1}{p}} \leq \esssup_{x\in \overline{\text{conv}(K)}} \Ebb\left[ \left| \partial_x X_t^{n,x} \right|^p\right]^{\frac{1}{p}} \leq C _2+ C_2^2 \int_0^t e^{C_2(t-s)} ds < \infty.
\end{align*}
\end{proof}

Before we prove weak differentiability of $X^x$ we recall the definition of the Sobolev space $W^{1,2}(U)$.

\begin{definition}
Let $U \subset \Rbb$ be an open and bounded subset. The Sobolev space $W^{1,2}(U)$ is defined as the set of functions $u:\Rbb \to \Rbb$, $u\in L^2(U)$, such that its weak derivative belongs to $L^2(U)$. Furthermore, the Sobolev space is endowed with the norm
\begin{align*}
	\Vert u \Vert_{W^{1,2}(U)} = \Vert u \Vert_{L^2(U)} + \Vert u' \Vert_{L^2(U)},
\end{align*}
where $u'$ is the weak derivative of $u \in W^{1,2}(U)$. We say a stochastic process $X$ is Sobolev differentiable in $U$, if for all $t\in[0,T]$, $X_t^{\cdot}$ belongs $\Pbb$-a.s. to $W^{1,2}(U)$.
\end{definition}

\begin{theorem}\label{solutionInSobolev}
Suppose the drift coefficient $b$ is in the decomposable form \eqref{formDrift} and uniformly Lipschitz continuous in the third variable \eqref{eq:uniformLipschitzThird}. Let $(X_t^x)_{t\in[0,T]}$ be the unique strong solution of \eqref{mainMcKeanVlasov} and $U\subset \Rbb$ be an open and bounded subset. Then for every $t\in[0,T]$,
\begin{align*}
	(x \mapsto X_t^x) \in L^2\left(\Omega, W^{1,2}(U)\right).
\end{align*}
\end{theorem}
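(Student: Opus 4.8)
The plan is to obtain the map $x\mapsto X_t^x$ as a weak limit in the Hilbert space $L^2\big(\Omega,W^{1,2}(U)\big)$ of the approximating family $\{x\mapsto X_t^{n,x}\}_{n\ge1}$ coming from \eqref{approximatingDrift}--\eqref{approximatingMFSDEExp}, and then to identify this limit with the solution itself. Fix $t\in[0,T]$. By \Cref{differentiable} each map $x\mapsto X_t^{n,x}$ is $\Pbb$-a.s.\ Lipschitz continuous on the bounded set $U$, hence $\Pbb$-a.s.\ an element of $W^{1,2}(U)$ whose weak derivative coincides a.e.\ with the first variation process $\partial_x X_t^{n,x}$ of \eqref{representationFV}. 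The first step is to show this family is bounded in $L^2(\Omega,W^{1,2}(U))$ uniformly in $n$: applying \Cref{pNormDerivative} with the compact set $\overline{U}$ and $p=2$ to the derivative term, and \Cref{boundsSolution} to $X_t^{n,x}$ itself, yields a constant $C>0$ independent of $n$ with
\begin{align*}
	\sup_{n\ge1}\Ebb\left[\Vert X_t^{n,\cdot}\Vert_{W^{1,2}(U)}^2\right]
	&\lesssim \sup_{n\ge1}\left(\int_U\Ebb\left[|X_t^{n,x}|^2\right]dx+\int_U\Ebb\left[|\partial_x X_t^{n,x}|^2\right]dx\right)\\
	&\le C.
\end{align*}

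Since $L^2(\Omega,W^{1,2}(U))$ is reflexive, I would then extract a subsequence (not relabelled) and an element $Y\in L^2(\Omega,W^{1,2}(U))$ with $X_t^{n,\cdot}\rightharpoonup Y$ weakly in $L^2(\Omega,W^{1,2}(U))$; by the structure of the $W^{1,2}(U)$-norm this forces $X_t^{n,\cdot}\rightharpoonup Y$ and $\partial_x X_t^{n,\cdot}\rightharpoonup \partial_x Y$ weakly in $L^2(\Omega\times U)$. It then remains to identify $Y=X_t^{\cdot}$ in $L^2(\Omega\times U)$, which shows that $x\mapsto X_t^x$ has a representative that is $\Pbb$-a.s.\ in $W^{1,2}(U)$ with finite $L^2(\Omega,W^{1,2}(U))$-norm, i.e.\ exactly the claim (and simultaneously exhibits $\partial_x X_t^x$ as the $L^2(\Omega\times U)$-weak limit of the $\partial_x X_t^{n,x}$). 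For the identification I would use that, by \Cref{convergenceApproximating} (which applies for every initial value, $x$ being arbitrary in \eqref{mainMcKeanVlasov}, together with the normalisation made after that theorem), $X_t^{n,x}\to X_t^x$ in $L^2(\Omega)$ for each $x\in U$, while $\sup_n\Ebb[|X_t^{n,x}|^2]\le C$ uniformly in $x\in U$ by \Cref{boundsSolution}; dominated convergence in the variable $x$ over the bounded set $U$ upgrades this to $X_t^{n,\cdot}\to X_t^{\cdot}$ strongly, hence weakly, in $L^2(\Omega\times U)$, and uniqueness of weak limits gives $Y=X_t^{\cdot}$.

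The delicate point is that the a priori control on the first variation processes $\partial_x X_t^{n,x}$ from \Cref{pNormDerivative} is only an $L^p(\Omega)$-bound that is \emph{essentially} bounded (not continuous) in $x$, so one cannot pass to the limit pathwise in $x$ and must instead run the argument in the reflexive space $L^2(\Omega,W^{1,2}(U))$ and check that the weak-derivative relation survives the weak limit. This last point is where the care lies, but it is standard: the map $u\mapsto\partial_x u$ is closed on $W^{1,2}(U)$, so weak convergence in $W^{1,2}(U)$ entails weak convergence of the derivatives in $L^2(U)$, and the graph of $\partial_x$ is weakly closed. The remaining ingredients — the uniform bound in the first step, reflexivity, and the identification of the weak limit via \Cref{convergenceApproximating} and the uniform moment estimate of \Cref{boundsSolution} — are routine.
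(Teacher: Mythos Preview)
Your proposal is correct and follows essentially the same route as the paper: establish the uniform bound $\sup_{n}\Ebb\big[\|X_t^{n,\cdot}\|_{W^{1,2}(U)}^2\big]<\infty$ via \Cref{boundsSolution} and \Cref{pNormDerivative}, extract a weakly convergent subsequence in the reflexive space $L^2(\Omega,W^{1,2}(U))$, and identify the limit with $X_t^{\cdot}$ using \Cref{convergenceApproximating} together with dominated convergence in $x$. The only cosmetic difference is that the paper carries out the identification by pairing with $\mathbbm{1}_A\,\phi'$ for $\phi\in\Ccal_0^\infty(U)$ and then passes through a countable dense family of test functions to obtain a single full-measure set $\Omega_0$ on which the weak derivative exists; your formulation via the closedness of $\partial_x$ and uniqueness of weak limits in $L^2(\Omega\times U)$ is an equivalent packaging of the same step.
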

\begin{proof}
	Let $(X_t^{n,x})_{t\in[0,T]}$ be the unique strong solutions of \eqref{approximatingMFSDEExp}. The main idea of this proof is to show that  $\left\lbrace X_t^n \right\rbrace_{n\geq 1}$ is weakly relatively compact in $L^2(\Omega,W^{1,2}(U))$ and identifying the relative limit $Y:=\lim_{k\to\infty} X^{n_k}$ in $L^2(\Omega,W^{1,2}(U))$ with $X$, where $\lbrace n_k \rbrace_{k\geq 1}$ is a suitable sub-sequence.\par
	Due to \Cref{boundsSolution} and \Cref{pNormDerivative}
	\begin{align*}
		\sup_{n\geq 1} \Ebb\left[ \Vert X_t^{n,x} \Vert_{W^{1,2}(U)}^2 \right] < \infty,
	\end{align*}
	and thus, the sequence $X_t^{n,x}$ is weakly relatively compact in $L^2(\Omega,W^{1,2}(U))$, see e.g. \cite{Leoni}[Theorem 10.44]. Consequently, there exists a sub-sequence $n_k$, $k\geq 0$ such that $X_t^{n_k,x}$ converges weakly to some $Y_t \in L^2(\Omega, W^{1,2}(U))$ as $k\to \infty$. 
	Let $\phi \in \Ccal_0^{\infty}(U)$ be an arbitrary test-function and denote by $\phi'$ if well-defined its first derivative. Define
	\begin{align*}
		\left\langle X_t^n, \phi \right\rangle := \int_U X_t^{n,x} \phi(x) dx.
	\end{align*}
	Then for all measurable sets $A \in \Fcal$ and $t\in[0,T]$ we get by \Cref{boundsSolution} that
\begin{align*}
	\Ebb \left[ \mathbbm{1}_A \langle X_t^{n}-X_t, \phi' \rangle \right] \leq \Vert \phi' \Vert_{L^2(U)} \vert U \vert^{\frac{1}{2}} \sup_{x\in\overline{U}}\Ebb \left[ \mathbbm{1}_A \vert X_t^{n,x}-X_t^x \vert^2 \right]^{\frac{1}{2}} < \infty,
\end{align*}
where $\overline{U}$ is the closure of $U$, and consequently by \Cref{convergenceApproximating} we get that \linebreak $\lim_{n\to\infty} \Ebb \left[ \mathbbm{1}_A \langle X_t^{n}-X_t, \phi' \rangle \right] = 0.$ Therefore,
\begin{align*}
	\Ebb[\mathbbm{1}_A \langle X_t, \phi'\rangle] = \lim_{k\to\infty} \Ebb[\mathbbm{1}_A \langle X_t^{n_k}, \phi' \rangle] = - \lim_{k\to\infty} \Ebb\left[\mathbbm{1}_A \left\langle \partial_x X_t^{n_k},\phi \right\rangle\right] = - \Ebb\left[\mathbbm{1}_A \left\langle \partial_x Y_t,\phi \right\rangle\right].
\end{align*}
Thus,
\begin{align}\label{eq:weakDerivativeExists}
	\Pbb\text{-a.s.} \quad \left\langle X_t, \phi' \right\rangle = - \left\langle \partial_x  Y_t, \phi \right\rangle.
\end{align}
Finally, we have to show as in \cite{MeyerBrandisBanosDuedahlProske_ComputingDeltas}[Theorem 3.4] that there exists a measurable set $\Omega_0 \subset \Omega$ with full measure such that $X_t^{\cdot}$ has a weak derivative on this subset. To this end, choose a sequence $\lbrace \phi_n \rbrace_{n\geq 1} \subset \Ccal_0^{\infty}(\Rbb)$ dense in $W^{1,2}(U)$ and a measurable subset $\Omega_n \subset \Omega$ with full measure such that \eqref{eq:weakDerivativeExists} holds on $\Omega_n$ with $\phi$ replaced by $\phi_n$. Then $\Omega_0 := \bigcap_{n\geq 1} \Omega_n$ satisfies the desired property.
\end{proof}

As a direct consequence of Theorem~\ref{solutionInSobolev} we are able to extend \Cref{pNormDerivative} to include also $\partial_x X_t^x$:

\begin{corollary}\label{derivativeInitialLpBound}
	Suppose the drift coefficient $b$ is in the decomposable form \eqref{formDrift} and uniformly Lipschitz continuous in the third variable \eqref{eq:uniformLipschitzThird}. Let $(X_t^x)_{t\in[0,T]}$ be the unique strong solution of \eqref{mainMcKeanVlasov}. Then for any compact subset $K \subset \Rbb$ and $p\geq 1$, there exists a constant $C>0$ such that
	\begin{align*}
		\sup_{t\in[0,T]} \esssup_{x\in K} \Ebb\left[ \left( \partial_x X_t^x \right)^p \right] \leq C.
	\end{align*}
\end{corollary}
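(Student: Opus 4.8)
The plan is to transfer the uniform bounds of \Cref{pNormDerivative} to the strong solution $X^x$ by using that, according to the proof of \Cref{solutionInSobolev}, $\partial_x X_t^{x}$ arises as a weak limit of the approximating first variation processes $\partial_x X_t^{n,x}$. Since $|y|^p\leq 1+|y|^2$ for $p\in[1,2]$, it suffices to treat the case $p\geq 2$. First I would fix $t\in[0,T]$ and a compact set $K\subset\Rbb$, pick a bounded open $U$ with $K\subset U$, and record from \Cref{pNormDerivative} (applied to the compact set $\overline U$) that $\sup_{n\geq1}\sup_{t\in[0,T]}\esssup_{x\in\overline U}\Ebb[|\partial_x X_t^{n,x}|^q]\leq C_q$ for every $q\geq 2$, with $C_q$ independent of $n$, $x$ and $t$. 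In particular the family $\{\partial_x X_t^{n,\cdot}\}_{n\geq1}$ is bounded in $L^q(\Omega\times U)$, uniformly in $n$ and $t$.

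Next I would show that the whole sequence $\partial_x X_t^{n,\cdot}$ converges weakly in $L^2(\Omega\times U)$ to $\partial_x X_t^{\cdot}$. Being bounded in $L^2(\Omega\times U)$, any subsequence has a further subsequence converging weakly to some $g$; re-running the identification from the proof of \Cref{solutionInSobolev} — testing against $\mathbbm{1}_A\phi'$ for $A\in\Fcal$ and $\phi\in\Ccal_0^\infty(U)$, and using $X_t^{n,x}\to X_t^x$ in $L^2(\Omega,\Fcal_t)$ for each $x$ together with \Cref{boundsSolution} — identifies $g=\partial_x X_t^{\cdot}$. Since all subsequential weak limits agree and the sequence is bounded, the full sequence converges weakly; combining this with the uniform $L^q$-bound and reflexivity of $L^q$ for $q>2$ upgrades it to $\partial_x X_t^{n,\cdot}\rightharpoonup\partial_x X_t^{\cdot}$ weakly in $L^q(\Omega\times U)$ for every $q\geq 2$ (for $q=2$ this is already weak $L^2$-convergence).

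Finally I would recover the pointwise estimate. For any box $B\subseteq K$, restriction preserves the weak $L^q(\Omega\times B)$-convergence, so weak lower semicontinuity of the $L^q$-norm together with \Cref{pNormDerivative} gives
\begin{align*}
	\Ebb\left[\int_B |\partial_x X_t^{x}|^q\,dx\right] \leq \liminf_{n\to\infty}\Ebb\left[\int_B |\partial_x X_t^{n,x}|^q\,dx\right] \leq |B|\liminf_{n\to\infty}\esssup_{y\in K}\Ebb\left[|\partial_x X_t^{n,y}|^q\right] \leq C_q|B|.
\end{align*}
Since $x\mapsto\Ebb[|\partial_x X_t^{x}|^q]$ is measurable (Tonelli) and has average at most $C_q$ over every box contained in $K$, the Lebesgue differentiation theorem yields $\Ebb[|\partial_x X_t^{x}|^q]\leq C_q$ for a.e.\ $x\in K$, i.e.\ $\esssup_{x\in K}\Ebb[|\partial_x X_t^{x}|^q]\leq C_q$. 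As $C_q$ does not depend on $t$, choosing $q=\max(p,2)$ (and, for $p<2$, the elementary bound above) completes the argument. The hard part is precisely this last step: the compactness machinery only delivers an $L^q$-bound integrated in $x$, and one must lean on the uniformity over \emph{all} compact subsets in \Cref{pNormDerivative} together with a Lebesgue point argument to promote it to the stated $\esssup_{x\in K}$ estimate.
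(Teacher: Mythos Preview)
Your argument is correct and follows the same overall strategy as the paper: transfer the uniform bound of \Cref{pNormDerivative} to $\partial_x X_t^x$ by passing to the limit along the approximating sequence from the proof of \Cref{solutionInSobolev}. The paper's own proof is a one-liner invoking ``Fatou's lemma'' to write $\Ebb[(\partial_x X_t^x)^p]\le\liminf_{n\to\infty}\Ebb[(\partial_x X_t^{n,x})^p]$ directly, whereas you go through weak $L^q(\Omega\times U)$-convergence, weak lower semicontinuity of the norm on shrinking boxes, and then Lebesgue differentiation to recover the pointwise-in-$x$ estimate.

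The comparison is worth recording: the paper's Fatou step, read literally, would require almost sure convergence of $\partial_x X_t^{n,x}$ to $\partial_x X_t^x$ for a.e.\ $x$, while \Cref{solutionInSobolev} only supplies weak convergence in $L^2(\Omega,W^{1,2}(U))$; at best one can interpret it as weak lower semicontinuity of the $L^p$-norm, which yields an $x$-integrated bound. Your Lebesgue-differentiation step is exactly what closes this gap and turns the integrated bound into the stated $\esssup_{x\in K}$ estimate. So the two proofs agree in spirit, but yours makes explicit the passage from the joint-in-$(x,\omega)$ bound to the uniform-in-$x$ bound that the paper's compressed argument leaves implicit.
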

\begin{proof}
	The proof follows by \Cref{pNormDerivative} and the application of Fatou's lemma:
	\begin{align*}
		\Ebb\left[ \left( \partial_x X_t^x \right)^p \right] \leq \liminf_{n\to\infty} \Ebb\left[ \left( \partial_x X_t^{n,x} \right)^p \right] \leq  C.
	\end{align*}
\end{proof}

We conclude this section by Proposition~\ref{derivativeInitial} that generalizes the probabilistic representation \eqref{representationFV} of the first variation process $\left( \partial_x X_t^x \right)_{t\in[0,T]}$ and the connection to the Malliavin derivative given in \Cref{derivativesRegular} to irregular drift coefficients. To this end we first verify the weak differentiability of the function $\left(x \mapsto  b\left(t,y,\Pbb_{X_t^x}\right) \right)$ in the next proposition.

\begin{proposition}\label{driftInSobolev}
	Suppose the drift coefficient $b$ is in the decomposable form \eqref{formDrift} and uniformly Lipschitz continuous in the third variable \eqref{eq:uniformLipschitzThird}. Let $(X_t^x)_{t\in[0,T]}$ be the unique strong solution of \eqref{mainMcKeanVlasov} and $U\subset \Rbb$ be an open and bounded subset. Then for every $1<p<\infty$, $t\in[0,T]$ and $y\in \Rbb$,
\begin{align*}
	\left(x \mapsto b\left(t,y,\Pbb_{X_t^x}\right) \right) \in W^{1,p}(U).
\end{align*}
\end{proposition}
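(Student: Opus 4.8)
The plan is to reduce the claim to a Lipschitz estimate for the flow of laws $x \mapsto \Pbb_{X_t^x}$ and then assemble it with the regularity results already established. By the uniform Lipschitz continuity of $b$ in the third variable \eqref{eq:uniformLipschitzThird}, for all $t\in[0,T]$, $y\in\Rbb$ and $x,x'\in U$,
\begin{align*}
	\left| b\left(t,y,\Pbb_{X_t^x}\right) - b\left(t,y,\Pbb_{X_t^{x'}}\right) \right| \leq C\,\Kcal\left(\Pbb_{X_t^x},\Pbb_{X_t^{x'}}\right).
\end{align*}
Hence it suffices to show that $x \mapsto \Pbb_{X_t^x}$ is Lipschitz in the Kantorovich metric with a constant uniform in $t$; indeed, $x\mapsto b(t,y,\Pbb_{X_t^x})$ is then Lipschitz on each connected component of $U$ with a constant uniform in $t$ and $y$, and, being moreover bounded on the bounded set $U$ --- by \eqref{formDrift} and \Cref{boundsSolution} one has $|b(t,y,\Pbb_{X_t^x})| \lesssim 1 + |y| + \Ebb[|X_t^x|]$ with $\Ebb[|X_t^x|]$ bounded for $x\in\overline{U}$ --- it belongs to $W^{1,\infty}(U)\subset W^{1,p}(U)$ for every $1<p<\infty$.

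For the Lipschitz estimate on $x\mapsto\Pbb_{X_t^x}$ I would fix $t\in[0,T]$ and use that $(X_t^x,X_t^{x'})$ is a coupling of $\Pbb_{X_t^x}$ and $\Pbb_{X_t^{x'}}$, whence $\Kcal(\Pbb_{X_t^x},\Pbb_{X_t^{x'}}) \leq \Ebb[|X_t^x - X_t^{x'}|]$. By \Cref{solutionInSobolev}, the map $x\mapsto X_t^x$ is $\Pbb$-a.s.\ in $W^{1,2}(U)$, hence $\Pbb$-a.s.\ absolutely continuous on each component of $U$, so that $X_t^x - X_t^{x'} = \int_{x'}^{x} \partial_z X_t^z\,dz$ for a.e.\ $x,x'$ in a common component. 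Taking expectations, using Tonelli's theorem (the weak derivative $x\mapsto\partial_x X_t^x$ being jointly measurable on $\Omega\times U$ by \Cref{solutionInSobolev}) and then \Cref{derivativeInitialLpBound} with $K=\overline{U}$,
\begin{align*}
	\Ebb\left[ \left| X_t^x - X_t^{x'} \right| \right] &= \Ebb\left[ \left| \int_{x'}^{x} \partial_z X_t^z\, dz \right| \right] \leq \int_{x'\wedge x}^{x'\vee x} \Ebb\left[ \left| \partial_z X_t^z \right| \right] dz \\
	&\leq |x-x'|\, \esssup_{z\in\overline{U}} \Ebb\left[ \left(\partial_z X_t^z\right)^2 \right]^{\frac{1}{2}} \leq C|x-x'|,
\end{align*}
with $C$ independent of $t$ and of $x,x'$. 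This is the sought estimate (valid for a.e.\ $x,x'$, which is all that is needed).

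Combining the two estimates, $|b(t,y,\Pbb_{X_t^x}) - b(t,y,\Pbb_{X_t^{x'}})| \leq C|x-x'|$ for a.e.\ $x,x'$ in a common component of $U$ and uniformly in $t,y$; thus $x\mapsto b(t,y,\Pbb_{X_t^x})$ coincides a.e.\ with a Lipschitz function on $U$, and the conclusion $(x\mapsto b(t,y,\Pbb_{X_t^x}))\in W^{1,p}(U)$ for $1<p<\infty$ follows as in the first paragraph. The heart of the argument is thus carried by \Cref{solutionInSobolev} and the uniform moment bound for the first variation process in \Cref{derivativeInitialLpBound}; the present proof is essentially their assembly, and the only point requiring care is that these statements hold only for Lebesgue-a.e.\ initial condition, so one works throughout with the absolutely continuous representative of $x\mapsto X_t^x$ and, at the end, with the Lipschitz representative of $x\mapsto b(t,y,\Pbb_{X_t^x})$. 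I do not expect a genuine obstacle here beyond this bookkeeping.
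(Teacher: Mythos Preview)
Your proof is correct and takes a genuinely different route from the paper's. The paper argues by approximation and weak compactness: with $b_n$ the approximating sequence from \eqref{approximatingDrift} and $b_n(x) := b_n(t,y,\Pbb_{X_t^{n,x}})$, it shows $\sup_{n\geq 1}\Vert b_n\Vert_{W^{1,p}(U)}<\infty$ (via \Cref{boundsSolution} and the proof of \Cref{pNormDerivative}), extracts a weak limit $g\in W^{1,p}(U)$, and then identifies $g'$ with the weak derivative of $b(\cdot)$ by checking $\langle b,\phi'\rangle = -\langle g',\phi\rangle$ for test functions $\phi$, using the pointwise convergence $b_n(x)\to b(x)$ established through \Cref{convergenceApproximating}. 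Your argument instead shows directly that $x\mapsto b(t,y,\Pbb_{X_t^x})$ is (a.e.\ equal to) a Lipschitz function, by chaining \eqref{eq:uniformLipschitzThird} with a Lipschitz estimate on $x\mapsto \Pbb_{X_t^x}$ obtained from the a.s.\ absolute continuity in \Cref{solutionInSobolev} together with the uniform moment bound of \Cref{derivativeInitialLpBound}. This is more elementary, bypasses the approximation machinery entirely, and in fact yields the stronger conclusion $W^{1,\infty}(U)$. Both routes rest ultimately on the uniform $L^p$ bound for the first variation process; the paper's version dovetails with the approximation scheme used throughout the section, while yours is cleaner and self-contained once \Cref{solutionInSobolev} and \Cref{derivativeInitialLpBound} are available --- and there is no circularity, since the proof of \Cref{derivativeInitialLpBound} relies only on \Cref{pNormDerivative} and Fatou's lemma, not on the present proposition.
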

\begin{proof}
	Let $\lbrace b_n \rbrace_{n\geq 1}$ be the approximating sequence of $b$ as defined in \eqref{approximatingDrift} and $(X_t^{n,x})_{t\in[0,T]}$, $n\geq 1$, the corresponding unique strong solutions of \eqref{approximatingMFSDEExp}. For notational simplicity we define $b_n(x) := b_n\left(t,y,\Pbb_{X_t^{n,x}}\right)$ for every $n\geq 0$. We proceed similar to the proof of \Cref{solutionInSobolev} and thus start by showing that $\lbrace b_n \rbrace_{n\geq 1}$ is weakly relatively compact in $W^{1,p}(U)$. Due to \Cref{boundsSolution} and the proof of \Cref{pNormDerivative}
	\begin{align*}
		\sup_{n\geq 1} \Vert b_n \Vert_{W^{1,p}(U)} < \infty.
	\end{align*}
	Hence, $\lbrace b_n \rbrace$ is bounded in $W^{1,p}(U)$ and thus weakly relatively compact by \linebreak\cite{Leoni}[Theorem 10.44]. Therefore, we can find a sub-sequence $\lbrace n_k \rbrace_{k\geq 1}$ and $g\in W^{1,p}(U)$ such that $b_{n_k}$ converges weakly to $g$ as $k\to\infty$.\par
	Let $\phi \in \Ccal_0^{\infty}(U)$ be an arbitrary test-function and denote by $\phi'$ if well-defined its first derivative. Define
	\begin{align*}
		\left\langle b_n, \phi \right\rangle := \int_U b_n(x) \phi(x) dx.
	\end{align*}
	Due to \Cref{boundsSolution}
	\begin{align*}
		\langle b_n - b , \phi' \rangle \leq \Vert \phi' \Vert_{L^p(U)} \vert U \vert^{\frac{1}{p}} \sup_{x\in\overline{U}} \vert b_n(x) - b(x) \vert < \infty,
	\end{align*}
	where $\overline{U}$ is the closure of $U$, and since by \Cref{convergenceApproximating}
	\begin{align*}
		&\left\vert b_n\left(t,y,\Pbb_{X_t^{n,x}} \right) - b\left(t,y,\Pbb_{X_t^x} \right) \right\vert \\
		&\quad \leq \left\vert b_n\left(t,y,\Pbb_{X_t^{n,x}} \right) - b_n\left(t,y,\Pbb_{X_t^x} \right) \right\vert + \left\vert b_n\left(t,y,\Pbb_{X_t^x} \right) - b\left(t,y,\Pbb_{X_t^x} \right) \right\vert \\
		&\quad \leq C \Kcal\left(\Pbb_{X_t^{n,x}}, \Pbb_{X_t^x} \right) + \left\vert b_n\left(t,y,\Pbb_{X_t^x} \right) - b\left(t,y,\Pbb_{X_t^x} \right) \right\vert \xrightarrow[n\to\infty]{} 0,
	\end{align*}
	we get $\lim_{n\to\infty} \langle b_n-b,\phi' \rangle = 0.$ Thus,
\begin{align*}
	\langle b, \phi'\rangle = \lim_{k\to\infty} \langle b_{n_k}, \phi' \rangle = - \lim_{k\to\infty} \left\langle b_{n_k}',\phi \right\rangle = - \left\langle g',\phi \right\rangle,
\end{align*}
where $b_{n_k}'$ and $g'$ are the first variation processes of $b_{n_k}$ and $g$, respectively.
\end{proof}


\begin{proposition}\label{derivativeInitial}
	Suppose the drift coefficient $b$ is in the decomposable form \eqref{formDrift} and uniformly Lipschitz continuous in the third variable \eqref{eq:uniformLipschitzThird}. For almost all $x \in \Rbb$ the first variation process (in the Sobolev sense) of the unique strong solution $(X_t^x)_{t\in[0,T]}$ of the mean-field SDE \eqref{mainMcKeanVlasov} has $dt\otimes d\Pbb$ almost surely the representation
	\begin{align}\label{weakDerivative}
	\begin{split}
		\partial_x X_t^x &= \exp \left\lbrace - \int_0^t \int_{\Rbb} b\left(u,y,\Pbb_{X_u^x}\right) L^{X^x}(du,dy) \right\rbrace \\
		&\quad + \int_0^t \exp \left\lbrace - \int_s^t \int_{\Rbb} b\left(u,y,\Pbb_{X_u^x}\right) L^{X^x}(du,dy) \right\rbrace \partial_x b\left(s,y,\Pbb_{X_s^x}\right)\vert_{y=X_s^x} ds.
	\end{split}
	\end{align}
Furthermore, for $s,t \in [0,T]$, $s\leq t$, the following relationship with the Malliavin Derivative holds:
	\begin{align}\label{relationDerivatives}
		\partial_x X_t^x = D_sX_t^x \partial_x X_s^x + \int_s^t D_uX_t^x \partial_x b\left(u,y,\Pbb_{X_u^x}\right)\vert_{y=X_u^x} du.
	\end{align}
\end{proposition}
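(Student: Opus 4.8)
The plan is to derive both \eqref{weakDerivative} and \eqref{relationDerivatives} by passing to the limit in the approximating scheme \eqref{approximatingDrift}--\eqref{approximatingMFSDEExp}. For each $n\ge 1$ the smooth coefficient $b_n=\btilde_n+\bhat$ satisfies the hypotheses of \Cref{differentiable}, \Cref{derivativesRegular} and \Cref{derivativeMalliavin}; the latter identifies the classical Malliavin derivative $D_uX_t^{n,x}=\exp\{\int_u^t\partial_2 b_n(s,X_s^{n,x},\Pbb_{X_s^{n,x}})\,ds\}$ occurring in \eqref{representationFV}--\eqref{relationMallFV} with $\exp\{-\int_u^t\int_{\Rbb}b_n(s,y,\Pbb_{X_s^{n,x}})L^{X^{n,x}}(ds,dy)\}$, so that \eqref{weakDerivative} and \eqref{relationDerivatives} hold verbatim with $(b,X^x)$ replaced by $(b_n,X^{n,x})$ for every $n$. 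It then remains to let $n\to\infty$. As in the proof of \Cref{solutionInSobolev} I would test both identities against $\phi\in\Ccal_0^\infty(U)$ and indicators $\mathbbm{1}_A$, $A\in\Fcal$, reduce to matching the limits of the resulting scalar quantities, and finally intersect a countable dense family of test functions to obtain the statement for $dt\otimes d\Pbb$-a.e.\ $(t,\omega)$ and a.e.\ $x$; recall that $X_u^{n,x}\to X_u^x$ in $L^2(\Omega,\Fcal_u)$ by \Cref{convergenceApproximating} and $\partial_xX_t^{n,x}\rightharpoonup\partial_xX_t^x$ weakly in $L^2(\Omega,W^{1,2}(U))$ by \Cref{solutionInSobolev}.

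The first auxiliary step is to upgrade the convergence of the Malliavin-derivative factor: I claim that for a.e.\ $x$ and every $q\ge 1$, $D_uX_t^{n,x}\to D_uX_t^x$ \emph{strongly} in $L^q(\Omega)$, uniformly in $0\le u\le t\le T$ and, along a subsequence, locally uniformly in $x$. This follows the local-time estimates behind \eqref{boundMalliavinPNorm}: by Girsanov's theorem the local-time space integrals $\int_u^t\int_{\Rbb}b_n(s,y,\Pbb_{X_s^{n,x}})L^{X^{n,x}}(ds,dy)$ reduce to Brownian local-time space integrals, whose continuous dependence on the integrand --- combined with $b_n\to b$ a.e., $\Pbb_{X_s^{n,x}}\to\Pbb_{X_s^x}$, and the uniform Lipschitz property \eqref{eq:uniformLipschitzThird} --- gives convergence in probability, which the uniform moment bound \eqref{boundMalliavinPNorm} then promotes to $L^q$-convergence. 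In particular the first summand $D_0X_t^{n,x}$ of the $b_n$-version of \eqref{weakDerivative} converges strongly in $L^q(\Omega)$, locally uniformly in $x$.

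The crux is the second summand $I_n(t,x):=\int_0^t D_uX_t^{n,x}\,\partial_x b_n(u,y,\Pbb_{X_u^{n,x}})\vert_{y=X_u^{n,x}}\,du$. By \eqref{eq:boundDerivativeFirstStage}, \eqref{eq:lipschitzExpecation} and \Cref{pNormDerivative} the drift-derivative factors $\partial_x b_n(u,X_u^{n,x},\Pbb_{X_u^{n,x}})$ are bounded in $L^{p}(\Omega\times[0,T]\times U)$ uniformly in $n$, so along a subsequence they converge weakly to some $Z(\omega,u,x)$; using \Cref{driftInSobolev} --- the weak $L^p(U)$-convergence of $x\mapsto\partial_x b_n(u,y,\Pbb_{X_u^{n,x}})$ to $x\mapsto\partial_x b(u,y,\Pbb_{X_u^x})$ --- together with $X_u^{n,x}\to X_u^x$ from \Cref{convergenceApproximating}, one identifies $Z=\partial_x b(u,y,\Pbb_{X_u^x})\vert_{y=X_u^x}$. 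Combining the \emph{strong} $L^q(\Omega)$-convergence of $D_uX_t^{n,x}$ from the previous step with this \emph{weak} convergence (a strong $\times$ weak product passes to the limit for admissible exponents), and using Fubini in $(\omega,u,x)$ before pairing with $\phi$ and $\mathbbm{1}_A$, yields $\Ebb[\mathbbm{1}_A\langle I_n(t,\cdot),\phi\rangle]\to\Ebb[\mathbbm{1}_A\langle I(t,\cdot),\phi\rangle]$ with $I(t,x)$ the second summand of \eqref{weakDerivative}. Matching this against the weak limit $\partial_xX_t^x$ of the left-hand sides from \Cref{solutionInSobolev} proves \eqref{weakDerivative}; then \eqref{relationDerivatives} follows either by repeating the limiting procedure on the $b_n$-version of \eqref{relationMallFV}, or by inserting \eqref{weakDerivative} at time $s$ into the cocycle identity $D_uX_t^x=D_uX_s^x\,D_sX_t^x$, which survives the limit by the same local-time convergence.

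I expect the main obstacle to be the passage to the limit in $I_n$: since \Cref{driftInSobolev} supplies only \emph{weak} $L^p$-convergence of the drift-derivative factor, one must control the limit of a product of two merely weakly convergent families, and everything hinges on first establishing genuine \emph{strong} $L^q(\Omega)$-convergence of the Malliavin factor $D_uX_t^{n,x}$ via the Girsanov/local-time machinery of \cite{MeyerBrandisBanosDuedahlProske_ComputingDeltas} together with \eqref{eq:uniformLipschitzThird}. A secondary technical difficulty is that $b_n$ depends on $x$ only through the \emph{deterministic} law $\Pbb_{X_u^{n,x}}$ but is then evaluated at the \emph{random} point $X_u^{n,x}$, so reconciling the weak-$L^p(U)$ statement of \Cref{driftInSobolev} with this random evaluation --- and the attendant joint-measurability bookkeeping --- is where the identification of $Z$ is most delicate.
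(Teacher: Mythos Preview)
Your overall approximation strategy matches the paper's exactly: pass to the limit in the $b_n$-version of \eqref{weakDerivative} using the weak $L^2(\Omega,W^{1,2}(U))$-convergence of $\partial_x X_t^{n,x}$ from \Cref{solutionInSobolev}, and then derive \eqref{relationDerivatives} from \eqref{weakDerivative} and \Cref{derivativeMalliavin}. The substantive difference is in how the two arguments handle the very difficulty you flag at the end.

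The paper tests not against indicators $\mathbbm{1}_A$ but against the Wiener transform $\Wcal(\cdot)(f)$, i.e.\ against stochastic exponentials $\Ecal(\int_0^T f\,dB)$, and then immediately applies Girsanov's theorem. This has the effect of replacing every occurrence of $X_u^{n,x}$ by $B_u^x$ at the cost of an additional Radon--Nikodym factor $\Ecal_n(x)$. After this change of measure the drift-derivative term becomes $\partial_x b_n(s,y,\Pbb_{X_s^{n,x}})\vert_{y=B_s^x}$, where the evaluation point $B_s^x$ is now \emph{independent of $n$}. The resulting five-term decomposition is then handled by the strong convergence of $\Ecal_n$ (\Cref{uniformConvergenceEcal}), the strong convergence of the local-time exponentials (\Cref{uniformConvergenceLocalTime}), and --- for the remaining term involving $\partial_x b_n$ at an $n$-independent random point --- strong $L^p(U)$-convergence of $x\mapsto\partial_x b_n(s,y,\Pbb_{X_s^{n,x}})$ to $x\mapsto\partial_x b(s,y,\Pbb_{X_s^x})$, obtained from \Cref{driftInSobolev} together with \cite{Ziemer_weaklyDifferentiableFunctions}[Lemma~2.1.3].

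So the Girsanov/Wiener-transform step is precisely the paper's device for circumventing the ``random evaluation at $X_u^{n,x}$'' obstacle you identify: it converts your strong-times-weak product, where the weak factor is evaluated at an $n$-dependent random point, into a sum of strong-times-bounded products with $n$-independent evaluation, for which passage to the limit is routine. Your route may be salvageable, but as written the identification of the weak limit $Z$ is a genuine gap: \Cref{driftInSobolev} gives only weak $L^p(U)$-convergence of $x\mapsto\partial_x b_n(u,y,\Pbb_{X_u^{n,x}})$ at \emph{fixed} $y$, and composing this with the $n$-dependent random point $X_u^{n,x}$ does not by itself yield weak convergence in $L^p(\Omega\times[0,T]\times U)$ of the composition --- exactly the reconciliation you single out as ``most delicate'' but do not carry out.
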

\begin{proof}
	Let $(b_n)_{n\geq 1}$ be the approximating sequence of $b$ as defined in \eqref{approximatingDrift} and \linebreak$(X_t^{n,x})_{t\in[0,T]}$ be the corresponding unique strong solutions of \eqref{approximatingMFSDEExp}. We define for $n\geq 0$
	\begin{align*}
		\Psi_n &:= \exp \left\lbrace -\int_0^t\int_{\Rbb} b_n\left(u,y,\Pbb_{X_u^{n,x}}\right) L^{X^{n,x}}(du,dy) \right\rbrace \\
		&\quad + \int_0^t \exp \left\lbrace -\int_s^t\int_{\Rbb} b_n\left(u,y,\Pbb_{X_u^{n,x}}\right) L^{X^{n,x}}(du,dy) \right\rbrace \partial_x b_n\left(s,y,\Pbb_{X_s^{n,x}}\right)\vert_{y=X_s^{n,x}} ds.
	\end{align*}
	For every $t\in[0,T]$ the sequence $\lbrace X_t^{n,x} \rbrace_{n\geq 1}$ converges weakly in $L^2(\Omega,W^{1,2}(U))$ to $X_t^x$ by the proof of \Cref{solutionInSobolev}. Hence, it suffices to show for every $f\in L^2([0,T])$ and $g\in C_0^{\infty}(U)$ that
	\begin{align*}
		\left\langle \Wcal\left( \Psi_n - \Psi_0 \right)(f), g \right\rangle \xrightarrow[n\to\infty]{} 0.
	\end{align*}
	Define for every $n\geq 0$
	\begin{align*}
		L_n(s,t,x) &:= \exp\left\lbrace  -\int_s^t\int_{\Rbb} b_n\left(u,y,\Pbb_{X_s^{n,x}}\right) L^{B^x}(du,dy) \right\rbrace, \text{ and}\\
		\Ecal_n(x) &:= \Ecal\left( \int_0^T b_n\left(u,B_u^x,\Pbb_{X_s^{n,x}}\right) +f(u) dB_u\right).
	\end{align*}
	Applying Girsanov's theorem and Minkowski's inequality yields 
	\begin{align*}
		&\left\langle \Wcal\left( \Psi_n- \Psi_0 \right)(f), g \right\rangle \\
		&\quad \leq \int_U g(x) \Ebb\left[\left| L_n(0,t,x) - L_0(0,t,x) \right| \Ecal_n(x) \right] dx \\
		&\qquad + \int_U g(x) \Ebb\left[\left|\Ecal_n(x) - \Ecal_0(x) \right| L_0(0,t,x) \right] dx\\
		&\qquad + \int_U  \int_0^t g(x)\Ebb \left[\left| L_n(s,t,x) - L_0(s,t,x) \right| \left| \partial_x b_n\left(s,y,\Pbb_{X_s^{n,x}}\right) \right|_{y=B_s^x}  \Ecal_n(x) \right] ds dx \\
		&\qquad + \int_U  \int_0^t g(x) \Ebb \left[\left| \Ecal_n(x) - \Ecal_0(x) \right| L_0(s,t,x) \left| \partial_x b_n\left(s,y,\Pbb_{X_s^{n,x}}\right) \right|_{y=B_s^x} \right] ds dx\\
		&\qquad + \int_U \int_0^t g(x) \Ebb \left[ \left| \partial_x b_n\left(s,y,\Pbb_{X_s^{n,x}}\right) - \partial_x b\left(s,y,\Pbb_{X_s^x}\right) \right|_{y=B_s^x}  L_0(s,t,x) \Ecal_0(x) \right] ds dx.
	\end{align*}
	Note that for any $1 < p < \infty$,
	\begin{align}\label{eq:partialBounded}
	\sup_{n\geq 0} \sup_{s\in[0,T]} \esssup_{x\in U} \Ebb \left[ \left| \partial_x b_n\left(s,y,\Pbb_{X_s^{n,x}}\right)\vert_{y=B_s^x} \right|^p \right]  < \infty,
	\end{align}
	due to \Cref{derivativeInitialLpBound} and the proof of \Cref{pNormDerivative}. Hence, we get by Hölder's inequality, \Cref{boundsSolution}, and \Cref{boundLocalTime} that for $q:= \frac{2(1+\varepsilon)}{2+\varepsilon}$ and $p:= \frac{2(1+\varepsilon)}{\varepsilon}$, where $\varepsilon>0$ is sufficiently small with regard to \Cref{boundsSolution},
	\begin{align*}
		&\left\langle \Wcal\left( \Psi_n- \Psi_0 \right)(f), g \right\rangle \\
		&\quad \lesssim \int_U g(x) \left( \sup_{s,t\in[0,T]} \Ebb \left[\left| L_n(s,t,x) - L_0(s,t,x) \right|^p \right]^{\frac{1}{p}} + \Ebb\left[\left|\Ecal_n(x) - \Ecal_0(x) \right|^{q} \right]^{\frac{1}{q}} \right) dx\\
		&\qquad + \int_U \int_0^t g(x) \Ebb \left[ \left| \partial_x b_n\left(s,y,\Pbb_{X_s^{n,x}}\right) - \partial_x b\left(s,y,\Pbb_{X_s^x}\right) \right|_{y=B_s^x}^p\right]^{\frac{1}{p}} ds dx.
	\end{align*}
	 The first two summands converge due to \Cref{uniformConvergenceEcal}, \Cref{uniformConvergenceLocalTime}, and dominated convergence. For the third summand we use that $\left(x \mapsto b\left(t,y,\Pbb_{X_t^x}\right) \right) \in W^{1,p}(U)$. Consequently, by dominated convergence and \cite{Ziemer_weaklyDifferentiableFunctions}[Lemma 2.1.3] we get that
	 \begin{align*}
	 	\int_U \int_0^t g(x) \Ebb \left[ \left| \partial_x b_n\left(s,y,\Pbb_{X_s^{n,x}}\right) - \partial_x b\left(s,y,\Pbb_{X_s^x}\right) \right|_{y=B_s^x}^p\right]^{\frac{1}{p}} ds dx \xrightarrow[n\to\infty]{} 0.
	 \end{align*}
\end{proof}

\section{Bismut-Elworthy-Li formula}\label{SBEL}

In this section we turn our attention to finding a Bismut-Elworthy-Li type formula, i.e.~ with the help of \Cref{derivativeInitial} we give a probabilistic representation of type \eqref{BismutGeneral} for $\partial_x \Ebb[\Phi(X_T^x)]$ for functions $\Phi$ merely satisfying some integrability condition. The following lemma prepares the grounds for the main result in Theorem~\ref{mainTheoremDelta}.

\begin{lemma}\label{repDelta2}
	Suppose the drift coefficient $b$ is in the decomposable form \eqref{formDrift} and uniformly Lipschitz continuous in the third variable \eqref{eq:uniformLipschitzThird}. Let $(X_t^x)_{t\in[0,T]}$ be the unique strong solution of the corresponding mean-field SDE \eqref{mainMcKeanVlasov} and $U\subset \Rbb$ be an open and bounded subset. Furthermore, consider the functional $\Phi \in \Ccal_b^{1,1}(\Rbb)$. Then for every $t\in[0,T]$ and $1<p<\infty$,
\begin{align*}
	\left(x \mapsto \Ebb\left[ \Phi(X_t^x) \right] \right) \in W^{1,p}(U).
\end{align*}
Moreover, for almost all $x \in U$
\begin{align}\label{eq:repDerivativeExpectation}
	\partial_x \Ebb\left[ \Phi(X_t^x) \right] = \Ebb\left[ \Phi'(X_t^x) \partial_x X_t^x \right],
\end{align}
where $\Phi'$ denotes the first derivative of $\Phi$.
\end{lemma}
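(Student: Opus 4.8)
The plan is to prove the identity \eqref{eq:repDerivativeExpectation} first for the smooth approximating drifts of \eqref{approximatingDrift} and then pass to the limit, using the weak-compactness scheme already employed in the proofs of \Cref{solutionInSobolev} and \Cref{driftInSobolev}.

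\textbf{Smooth case.} Let $(b_n)_{n\geq 1}$ be the approximating sequence from \eqref{approximatingDrift} and $(X_t^{n,x})_{t\in[0,T]}$ the corresponding strong solutions. Since $b_n\in\Ccal_b^{1,L}(\Rbb\times\Pcal_1(\Rbb))$, by \Cref{differentiable} the map $x\mapsto X_t^{n,x}$ is a.s.\ Lipschitz with first variation process $\partial_x X_t^{n,x}$, and by \Cref{pNormDerivative} these derivatives are bounded in $L^p$ uniformly in $n$ over compact sets of initial conditions. As $\Phi\in\Ccal_b^{1,1}(\Rbb)$ is globally Lipschitz with bounded derivative, $x\mapsto\Phi(X_t^{n,x})$ is a.s.\ Lipschitz with a.e.\ derivative $\Phi'(X_t^{n,x})\partial_x X_t^{n,x}$, and the associated difference quotients are dominated, uniformly in the increment, by $\Vert\Phi'\Vert_\infty$ times the (deterministic) Lipschitz constant of $x\mapsto X_t^{n,x}$; hence I may differentiate under the expectation to obtain $\partial_x\Ebb[\Phi(X_t^{n,x})]=\Ebb[\Phi'(X_t^{n,x})\partial_x X_t^{n,x}]$ for a.e.\ $x\in U$.

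\textbf{Weak compactness and identification of the function.} Using the linear growth of $\Phi$ together with \Cref{boundsSolution}, and bounding the weak derivative by $\Vert\Phi'\Vert_\infty$ times the uniform $L^p$-bound of $\partial_x X_t^{n,x}$ from the previous step, one gets $\sup_{n\geq 1}\Vert x\mapsto\Ebb[\Phi(X_t^{n,x})]\Vert_{W^{1,p}(U)}<\infty$. By \cite{Leoni}[Theorem 10.44] there are a subsequence $(n_k)$ and $h\in W^{1,p}(U)$ with $x\mapsto\Ebb[\Phi(X_t^{n_k,x})]\rightharpoonup h$ in $W^{1,p}(U)$. Since $X_t^{n,x}\to X_t^x$ in $L^2(\Omega,\Fcal_t)$ by \Cref{convergenceApproximating} and $\Phi$ is Lipschitz, $\Ebb[\Phi(X_t^{n,x})]\to\Ebb[\Phi(X_t^x)]$ for every $x\in U$, and by \Cref{boundsSolution} this convergence is dominated on the bounded set $U$; hence it also holds in $L^p(U)$, so uniqueness of the weak limit gives $h(x)=\Ebb[\Phi(X_t^x)]$ a.e., i.e.\ $(x\mapsto\Ebb[\Phi(X_t^x)])\in W^{1,p}(U)$.

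\textbf{Identification of the derivative.} For an arbitrary test function $\phi\in\Ccal_0^\infty(U)$,
\[
\big\langle\Ebb[\Phi(X_t^{\cdot})],\phi'\big\rangle=\lim_{k\to\infty}\big\langle\Ebb[\Phi(X_t^{n_k,\cdot})],\phi'\big\rangle=-\lim_{k\to\infty}\Ebb\Big[\int_U\Phi'(X_t^{n_k,x})\,\partial_x X_t^{n_k,x}\,\phi(x)\,dx\Big].
\]
The main obstacle is to pass to the limit on the right-hand side, since $\partial_x X_t^{n,x}$ converges to $\partial_x X_t^{x}$ only weakly. I would argue as follows. By \Cref{pNormDerivative} the sequence $\partial_x X_t^{n_k,\cdot}$ is bounded in $L^2(\Omega\times U)$, and it converges weakly in $L^2(\Omega\times U)$ to $\partial_x X_t^{\cdot}$, because $X_t^{n_k,\cdot}\rightharpoonup X_t^{\cdot}$ weakly in $L^2(\Omega,W^{1,2}(U))$ by the proof of \Cref{solutionInSobolev} and the embedding $W^{1,2}(U)\hookrightarrow L^2(U)$ is bounded linear. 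On the other hand, $\Phi'$ is bounded and continuous, so $\Phi'(X_t^{n_k,x})\to\Phi'(X_t^x)$ in probability for each $x$ by \Cref{convergenceApproximating}, hence in $L^2(\Omega)$ by bounded convergence, and hence (dominated convergence in $x$ over the bounded $U$) $\Phi'(X_t^{n_k,\cdot})\,\phi(\cdot)\to\Phi'(X_t^{\cdot})\,\phi(\cdot)$ strongly in $L^2(\Omega\times U)$. Since the pairing of a strongly and a weakly convergent sequence in $L^2(\Omega\times U)$ converges, the right-hand side above tends to $-\Ebb\big[\int_U\Phi'(X_t^x)\partial_x X_t^x\phi(x)\,dx\big]=-\big\langle\Ebb[\Phi'(X_t^{\cdot})\partial_x X_t^{\cdot}],\phi\big\rangle$, which establishes \eqref{eq:repDerivativeExpectation}. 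Alternatively, one could mirror the Girsanov/Wiener-transform computation from the proof of \Cref{derivativeInitial}, but the weak-compactness route above is the more economical one.
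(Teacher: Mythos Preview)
Your proof is correct, but it takes a substantially different route from the paper. The paper's argument is far more direct: since $\Phi\in\Ccal_b^{1,1}(\Rbb)$ is Lipschitz, the bound \eqref{eq:lipschitzExpecation} (valid for the approximations $X^{n,x}$ and passed to the limit via \Cref{convergenceApproximating}) shows that $x\mapsto\Ebb[\Phi(X_t^x)]$ is Lipschitz on $U$, hence in $W^{1,p}(U)$ for every $p$. The derivative formula is then obtained by computing the difference quotient directly and interchanging limit and expectation by dominated convergence, invoking the chain rule and \Cref{solutionInSobolev}.

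Your approach instead replays the weak-compactness scheme of \Cref{solutionInSobolev} and \Cref{driftInSobolev} at the level of $\Ebb[\Phi(X_t^{n,\cdot})]$. This is more elaborate, but it has the advantage of making the passage to the limit in the derivative completely transparent: you reduce it to the product of a strongly convergent and a weakly convergent sequence in $L^2(\Omega\times U)$, which is clean. The paper's argument, by contrast, is terse at the dominated-convergence/chain-rule step and tacitly uses that $x\mapsto X_t^x$ has an a.s.\ absolutely continuous version with a.e.\ derivative $\partial_x X_t^x$.

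One small correction: in your justification that $\partial_x X_t^{n_k,\cdot}\rightharpoonup\partial_x X_t^{\cdot}$ in $L^2(\Omega\times U)$, the relevant bounded linear map is not the embedding $W^{1,2}(U)\hookrightarrow L^2(U)$ but the derivative operator $f\mapsto f'$ from $W^{1,2}(U)$ to $L^2(U)$; weak continuity of the induced map $L^2(\Omega,W^{1,2}(U))\to L^2(\Omega\times U)$ then gives the claim.
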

\begin{proof}
	It is readily seen that $\left( x \mapsto \Ebb[X_t^x] \right) \in \Lip_{C_1}(U,\Rbb)$ for some constant $C_1>0$ due to \eqref{eq:lipschitzExpecation} and \Cref{convergenceApproximating}. Therefore, we get with the assumptions on the functional $\Phi$ that there exists a constant $C_2>0$ such that $\left( x \mapsto \Ebb[\Phi(X_t^x)] \right) \in \Lip_{C_2}(U,\Rbb)$. Hence, $\Ebb[\Phi(X_t^x)]$ is almost everywhere and weakly differentiable on $U$ and for almost all $x \in U$
	\begin{align*}
		\partial_x \Ebb[\Phi(X_t^x)] &= \lim_{h\to 0} \frac{\Ebb[\Phi(X_t^{x+h})] - \Ebb[\Phi(X_t^x)]}{h} = \Ebb\left[ \lim_{h\to 0} \frac{\Phi(X_t^{x+h}) - \Phi(X_t^x)}{h} \right] \\
		&= \Ebb\left[ \Phi'(X_t^x) \partial_x X_t^x \right],
	\end{align*}
	where we used dominated convergence and the chain rule. Finally, we can conclude directly from \eqref{eq:repDerivativeExpectation} using \Cref{derivativeInitialLpBound} and the boundedness of $\Phi'$ that \linebreak $\left(x \mapsto \Ebb\left[ \Phi(X_t^x) \right] \right) \in W^{1,p}(U)$ for every $1<p<\infty$.
\end{proof}

\begin{theorem}\label{mainTheoremDelta}
	Suppose the drift coefficient $b$ is in the decomposable form \eqref{formDrift} and uniformly Lipschitz continuous in the third variable \eqref{eq:uniformLipschitzThird}. Let $(X_t^x)_{t\in[0,T]}$ be the unique strong solution of the corresponding mean-field SDE \eqref{mainMcKeanVlasov}, $K\subset \Rbb$ be a compact subset and $\Phi \in L^{2p}(\Rbb;\omega_T)$, where $p:= \frac{1+\varepsilon}{\varepsilon}$, $\varepsilon>0$ sufficiently small with regard to \Cref{boundsSolution}, and $\omega_T$ is as defined in \eqref{weightFunction}. Then, for every open subset $U \subset K$, $t\in[0,T]$ and $1<q<\infty$,
	\begin{align*}
		\left(x \mapsto \Ebb\left[ \Phi(X_t^x) \right] \right) \in W^{1,q}(U),
	\end{align*}
	and for almost all $x \in K$
	\begin{align}\label{Delta}
		\partial_x \Ebb[\Phi(X_T^x)] = \Ebb \left[ \Phi(X_T^x) \left( \int_0^T a(s) \partial_x X_s^x + \partial_x b\left(s,y,\Pbb_{X_s^x}\right)\vert_{y=X_s^x} \int_0^s a(u) du dB_s \right) \right],
	\end{align}
	where $\partial_x X_s^x$ is given in \eqref{weakDerivative} and $a: \Rbb  \to \Rbb$ is any bounded, measurable function such that
	\begin{align*}
		\int_0^T a(s) ds = 1.
	\end{align*}
\end{theorem}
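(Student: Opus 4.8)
The plan is to follow the classical Bismut--Elworthy--Li strategy: first establish the formula for smooth $\Phi$ using the representation \eqref{relationDerivatives} of the first variation process together with the duality between the Skorokhod integral and the Malliavin derivative, then pass to the general integrability assumption $\Phi \in L^{2p}(\Rbb;\omega_T)$ by an approximation argument. I would begin by assuming $\Phi \in \Ccal_b^{1,1}(\Rbb)$, so that \Cref{repDelta2} already gives $\left(x\mapsto\Ebb[\Phi(X_T^x)]\right)\in W^{1,q}(U)$ and the identity $\partial_x\Ebb[\Phi(X_T^x)] = \Ebb[\Phi'(X_T^x)\,\partial_x X_T^x]$ for a.e.\ $x$. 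The goal is then to remove the derivative $\Phi'$ from this expression.

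The core computation: fix a bounded measurable $a$ with $\int_0^T a(s)\,ds = 1$. Using \eqref{relationDerivatives} with the roles of $s$ and $t=T$, one has for every $s\in[0,T]$
\begin{align*}
	\partial_x X_T^x = D_s X_T^x\,\partial_x X_s^x + \int_s^T D_u X_T^x\,\partial_x b(u,y,\Pbb_{X_u^x})\vert_{y=X_u^x}\,du.
\end{align*}
Multiplying by $a(s)$ and integrating over $s\in[0,T]$, the normalization of $a$ gives
\begin{align*}
	\partial_x X_T^x = \int_0^T a(s) D_s X_T^x\,\partial_x X_s^x\,ds + \int_0^T D_u X_T^x\,\partial_x b(u,y,\Pbb_{X_u^x})\vert_{y=X_u^x}\left(\int_0^u a(s)\,ds\right)du,
\end{align*}
after switching the order of integration in the double integral (Fubini). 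Plugging this into $\Ebb[\Phi'(X_T^x)\,\partial_x X_T^x]$ and using the chain rule for the Malliavin derivative, $D_s\bigl(\Phi(X_T^x)\bigr) = \Phi'(X_T^x) D_s X_T^x$, converts the expression into
\begin{align*}
	\Ebb\left[\int_0^T D_s\bigl(\Phi(X_T^x)\bigr)\Bigl(a(s)\partial_x X_s^x + \partial_x b(s,y,\Pbb_{X_s^x})\vert_{y=X_s^x}\!\int_0^s a(u)\,du\Bigr)ds\right].
\end{align*}
The integrand against $D_s(\cdot)$ is $\Fcal_s$-measurable (indeed $\partial_x X_s^x$ and $X_s^x$ are), hence adapted, so the duality relation $\Ebb\left[\int_0^T D_s F\,u_s\,ds\right] = \Ebb\left[F\int_0^T u_s\,dB_s\right]$ for adapted $u$ (the Skorokhod integral reducing to the It\^o integral) yields exactly \eqref{Delta} for $\Phi \in \Ccal_b^{1,1}(\Rbb)$. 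To justify this one needs $\Phi(X_T^x)\in\mathbb{D}^{1,2}$ and the process $u_s$ to be square-integrable; the latter follows from \Cref{derivativeInitialLpBound}, \eqref{eq:partialBounded}, and the bounds on $D_sX_t^x$ in \eqref{boundMalliavinPNorm}, together with H\"older, giving uniform $L^p$ control of the Malliavin weight $\int_0^T\theta(s,X_s^x)\,dB_s$ on compacts $K$.

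The final step is the approximation in $\Phi$. Pick $\Phi_m \in \Ccal_b^{1,1}(\Rbb)$ with $\Phi_m \to \Phi$ in $L^{2p}(\Rbb;\omega_T)$; by Girsanov (\Cref{equivalentBrownianMeasurePbb}) and H\"older, using that the Radon--Nikodym density has moments of order $1+\varepsilon$ (the role of \Cref{boundsSolution}) and that $B_T^x$ has a Gaussian density comparable to $\omega_T$, one gets $\Ebb[\Phi_m(X_t^x)] \to \Ebb[\Phi(X_t^x)]$ and, crucially, $\Ebb[\Phi_m(X_T^x)M^x] \to \Ebb[\Phi(X_T^x)M^x]$ for the Malliavin weight $M^x := \int_0^T a(s)\partial_x X_s^x + \partial_x b(s,y,\Pbb_{X_s^x})\vert_{y=X_s^x}\int_0^s a(u)\,du\,dB_s$, locally uniformly in $x$, since $M^x \in L^{p'}$ uniformly on $K$ for the conjugate exponent $p'$ (here is where the $2p$ in the integrability exponent of $\Phi$ is calibrated). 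Uniform-on-$U$ convergence of the right-hand sides then transfers $W^{1,q}(U)$ membership and the formula \eqref{Delta} from $\Phi_m$ to $\Phi$: the functions $x\mapsto\Ebb[\Phi_m(X_t^x)]$ converge in $W^{1,q}(U)$ because both they and their derivatives converge in $L^q(U)$, and a weakly convergent sequence of weak derivatives identifies the weak derivative of the limit.

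The main obstacle is the approximation step for irregular $\Phi$: one must produce \emph{uniform-in-$x$} (on compacts) $L^q(U)$-bounds and convergence for the derivative functionals $x \mapsto \Ebb[\Phi_m(X_T^x)M^x]$, which requires simultaneously controlling the Malliavin weight $M^x$ in $L^{p'}$ uniformly over $K$ (via \Cref{derivativeInitialLpBound}, \eqref{boundMalliavinPNorm}, \eqref{eq:partialBounded}) and controlling the Girsanov density together with the Gaussian tail so that $L^{2p}(\Rbb;\omega_T)$-convergence of $\Phi_m$ suffices — this is precisely the balance encoded in the choice $p = \tfrac{1+\varepsilon}{\varepsilon}$. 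The smooth case itself is comparatively routine once \eqref{relationDerivatives} and the Malliavin chain rule are in hand; the bookkeeping with the Fubini swap and the adaptedness of the integrand is the only subtlety there.
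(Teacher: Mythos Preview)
Your proposal is correct and follows essentially the same route as the paper's proof: establish \eqref{Delta} for $\Phi\in\Ccal_b^{1,1}(\Rbb)$ via \Cref{repDelta2}, the relation \eqref{relationDerivatives}, the Fubini swap after integrating against $a(s)$, the Malliavin chain rule, and the duality formula for adapted integrands; then approximate $\Phi\in L^{2p}(\Rbb;\omega_T)$ by smooth functions and pass to the limit using Girsanov, the Gaussian tail comparison $e^{-(y-x)^2/2T}\le e^{-y^2/4T}e^{x^2/2T}$, and the uniform-on-compacts $L^{p'}$ control of the Malliavin weight. The only cosmetic difference is that the paper applies duality to the two summands separately rather than combining them into a single adapted integrand, and it phrases the final passage to the limit as $\langle u_n'-\overline{u},\phi\rangle_U\to 0$ for test functions $\phi$ rather than as convergence in $W^{1,q}(U)$, but these are equivalent given the locally uniform convergence you also invoke.
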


\begin{remark}
	Note that in the case of an SDE the derivative \eqref{Delta} collapses to the representation
	\begin{align*}
		\Ebb \left[ \Phi(X_T^x) \int_0^T a(s) \partial_x X_s^x dB_s \right]
	\end{align*}
	established in \cite{MeyerBrandisBanosDuedahlProske_ComputingDeltas}, where the first variation process $\partial_x X^x$ has the representation
	\begin{align*}
		\partial_x X_t^x = \exp \left\lbrace - \int_0^t \int_{\Rbb} b(u,y) L^{X^x}(du,dy) \right\rbrace.
	\end{align*}
	Hence, one can speak of a derivative free representation. Regarding mean-field SDEs, the derivative $\partial_x b\left(s,y,\Pbb_{X_s^x}\right)$ still appears in the representation of $\partial_x X^x$.
\end{remark}

\begin{remark}
	In \cite{Banos_Bismut} the Bismut-Elworthy-Li formula \eqref{BismutGeneral} contained Skorohod integration. Here we find an adapted integrand and establish a representation only involving It\^{o} integration. Replacing Skorohod by It\^{o} integration enables a numerical simulation of $\partial_x \Ebb \left[ \Phi(X_T^x) \right]$ through the use of representation \eqref{Delta}.
\end{remark}

\begin{remark}\label{rem:regBismut}
	In \cite{Bauer_RegularityOfMFSDE} we show that for the special case of mean-field SDEs of type \eqref{eq:specialMFSDEGeneral}, the expectation functional $\Ebb[\Phi(X_t^x)]$ is even continuously differentiable in $x$ for irregular drift coefficients under certain additional assumptions on $\hat{b}$ and $\varphi$.
\end{remark}

\begin{proof}[Proof of \Cref{mainTheoremDelta}]
	We start by showing the result for $\Phi \in \Ccal_b^{1,1}(\Rbb)$. In this case the derivative $\partial_x \Ebb[\Phi(X_T^x)]$ exists by \Cref{repDelta2} and admits representation \eqref{eq:repDerivativeExpectation}. Furthermore, by \eqref{relationDerivatives} for any $s \leq T$,
	\begin{align*}
		\partial_x X_T^x = D_sX_T^x \partial_x X_s^x + \int_s^T D_uX_T^x \partial_x b\left(u,y,\Pbb_{X_u^x}\right)\vert_{y=X_u^x} du.
	\end{align*}
	Recall that $D_sX_T^x=0$ for $s\geq T$. Thus for any bounded function $a:\Rbb \to \Rbb$ with $\int_0^T a(s) ds = 1$,
	\begin{align*}
		\partial_x X_T^x &= \int_0^T a(s) \left( D_s X_T^x \partial_x X_s^x + \int_s^T D_uX_T^x \partial_x b\left(u,y,\Pbb_{X_u^x}\right)\vert_{y=X_u^x} du \right) ds \\
		&= \int_0^T a(s) D_s X_T^x \partial_x X_s^x ds  + \int_0^T \int_s^T a(s) D_uX_T^x \partial_x b\left(u,y,\Pbb_{X_u^x}\right)\vert_{y=X_u^x} du ds.
	\end{align*}
	We look at each summand individually starting with the first one. Since $\Phi \in \Ccal_b^{1,1}(\Rbb)$, $\Phi(X_T^x)$ is Malliavin differentiable and
	\begin{align*}
		\Ebb \left[ \Phi'(X_T^x) \int_0^T a(s) D_s X_T^x \partial_x X_s^x ds \right] = \Ebb \left[ \int_0^T a(s) D_s \Phi(X_T^x) \partial_x X_s^x ds \right].
	\end{align*}
	Due to the fact that $s \mapsto a(s) \partial_x X_s^x$ is an adapted process satisfying
	\begin{align*}
		\Ebb \left[ \int_0^T \left( a(s) \partial_x X_s^x \right)^2 ds \right] < \infty
	\end{align*}
	by \Cref{derivativeInitialLpBound}, we can apply the duality formula \cite{ProskeDiNunnoOksendal_MalliavinCalculus}[Corollary 4.4] and get
	\begin{align*}
		\Ebb \left[ \int_0^T a(s) D_s \Phi(X_T^x) \partial_x X_s^x ds \right] = \Ebb \left[ \Phi(X_T^x) \int_0^T a(s) \partial_x X_s^x dB_s \right].
	\end{align*}
	For the second summand note that by \eqref{boundMalliavinPNorm} and the proof of \Cref{pNormDerivative}
	\begin{align*}
		\sup_{u,s\in[0,T]} \Ebb\left[ \left| \Phi'(X_T^x) a(s) D_uX_T^x \partial_x b\left(u,y,\Pbb_{X_u^x}\right)\vert_{y=X_u^x} \right| \right] < \infty.
	\end{align*}
	Hence, the integral 
	\begin{align*}
		\int_0^T \int_0^T \Ebb \left[ \left|\Phi'(X_T^x) a(s) D_uX_T^x \partial_x b\left(u,y,\Pbb_{X_u^x}\right)\vert_{y=X_u^x} \right| \right] du ds
	\end{align*}
	exists and is finite by Tonelli's Theorem. Consequently, we can	 interchange the order of integration to deduce
	\begin{align*}
		&\Ebb\left[ \Phi'(X_T^x) \int_0^T \int_s^T a(s) D_uX_T^x \partial_x b\left(u,y,\Pbb_{X_u^x}\right)\vert_{y=X_u^x} du ds \right] \\
		&\quad = \Ebb\left[ \int_0^T D_u\Phi(X_T^x) \partial_x b\left(u,y,\Pbb_{X_u^x}\right)\vert_{y=X_u^x} \int_0^u a(s) ds du \right].
	\end{align*}
	Furthermore, $u \mapsto \partial_x b\left(u,y,\Pbb_{X_u^x}\right)\vert_{y=X_u^x}$ is an $\Fcal$-adapted process. Hence, we can apply the duality formula \cite{ProskeDiNunnoOksendal_MalliavinCalculus}[Corollary 4.4] and get
	\begin{align*}
		&\Ebb\left[ \int_0^T D_u\Phi(X_T^x) \partial_x b\left(u,y,\Pbb_{X_u^x}\right)\vert_{y=X_u^x} \int_0^u a(s) ds du \right] \\
		&\quad = \Ebb\left[ \Phi(X_T^x) \int_0^T \partial_x b\left(u,y,\Pbb_{X_u^x}\right)\vert_{y=X_u^x} \int_0^u a(s) ds dB_u \right].
	\end{align*}
	Putting all together provides representation \eqref{Delta} for $\Phi \in \Ccal_b^{1,1}(\Rbb)$. \par 
By standard arguments, we can now approximate $\Phi\in L^{2p}(\Rbb;\omega_T)$ by a smooth sequence $\lbrace \Phi_n \rbrace_{n\geq 1} \subset C_0^{\infty}(\Rbb)$ such that $\Phi_n \to \Phi$ in $L^{2p}(\Rbb; \omega_T)$ as $n\to\infty$. Define
	\begin{align*}
		u_n(x) &:= \Ebb \left[ \Phi_n(X_T^x) \right]\quad \text{and} \\
		\overline{u}(x) &:= \Ebb \left[ \Phi(X_T^x) \left( \int_0^T a(s) \partial_x X_s^x + \partial_x b(u,X_u^x,\Pbb_{X_u^x})\vert_{y=X_s^x} \int_0^u a(s) ds dB_u \right) \right].
	\end{align*}		
	First, we obtain that $\overline{u}$ is well-defined using Hölder's inequality, Itô's isometry and \Cref{boundsSolution}. Indeed,
	\begin{align}\label{wellDefined}
	\begin{split}
		|\overline{u}(x)| &\leq \Ebb \left[ \Phi(X_T^x)^2 \right]^{\frac{1}{2}} \Ebb \left[ \left( \int_0^T a(s) \partial_x X_s^x + \partial_x b(u,X_u^x,\Pbb_{X_u^x})\vert_{y=X_s^x} \int_0^u a(s) ds dB_u \right)^2 \right]^{\frac{1}{2}} \\
		&\leq \Ebb \left[ \Phi(B_T^x)^2 \Ecal\left( \int_0^T b(u,B_u^x,\rho_u^x)dB_u \right) \right]^{\frac{1}{2}}\\
		&\quad \times \Ebb \left[ \int_0^T \left(a(s) \partial_x X_s^x + \partial_x b(u,X_u^x,\Pbb_{X_u^x})\vert_{y=X_s^x} \int_0^u a(s) ds \right)^2 du \right]^{\frac{1}{2}} \\
		&\lesssim \Ebb \left[ \left| \Phi(B_T^x) \right|^{2p} \right]^{\frac{1}{2p}} < \infty,
	\end{split}
	\end{align}
	where the last inequality holds due to \Cref{pNormDerivative} and the proof of \Cref{driftInSobolev}. Similar to the proof of \Cref{driftInSobolev} it is left to show that $\langle u_n' - \overline{u}, \phi \rangle_U$ for any test-function $\phi \in \Ccal_0^{\infty}(U)$ as $n\to\infty$, where $U\subset K$ is an open set. Since the bounds in \eqref{wellDefined} hold for almost all $x\in U \subset K$, we get exactly in the same way that
	\begin{align*}
		|u'(x) - \overline{u}(x)| &\leq C(x) \Ebb \left[ \left| \Phi_n(B_T^x) - \Phi(B_T^x)\right|^{2p}\right]^{\frac{1}{2p}} \\
		&= C(x) \left( \int_{\Rbb} \frac{1}{\sqrt{2 \pi T}} \left| \Phi_n(y) - \Phi(y)\right|^{2p} e^{-\frac{(y-x)^2}{2T}} dy \right)^{\frac{1}{2p}} \\
		&\leq C(x) \left( \frac{e^{\frac{x^2}{2T}}}{\sqrt{2 \pi T}} \int_{\Rbb} \left| \Phi_n(y) - \Phi(y)\right|^{2p} e^{-\frac{y^2}{4T}} dy \right)^{\frac{1}{2p}} \\
		&= C(x) \left( \frac{e^{\frac{x^2}{2T}}}{\sqrt{2 \pi T}}\right)^{\frac{1}{2p}} \left\Vert \Phi_n - \Phi \right\Vert_{L^{2p}(\Rbb;\omega_T)},
	\end{align*}
	where $C(x)>0$ is bounded for almost every $x\in K$ and where we have used $$e^{-\frac{(y-x)^2}{2t}} = e^{-\frac{y^2}{4t}} e^{-\frac{(y-2x)^2}{4t}} e^{\frac{x^2}{2t}} \leq e^{-\frac{y^2}{4t}} e^{\frac{x^2}{2t}}.$$ Hence, for any open subset $U \subset K$, we get
	\begin{align*}
		\lim_{n\to\infty}  \langle u_n'(x) - \overline{u}(x), \phi \rangle_U = 0.
	\end{align*}
	Thus $u'= \overline{u}$ for almost every $x \in K$.
\end{proof}

\appendix
\section{Technical Results}
%

\begin{lemma}\label{pNorm}
	Let $b:[0,T] \times \Rbb \times \Pcal_1(\Rbb) \to \Rbb$ be a measurable function satisfying the linear growth condition \eqref{linearGrowth}. Furthermore, let $(\Omega, \Fcal, \Fbb, \Pbb, B, X^x)$ be a weak solution of \eqref{helpMcKeanVlasov}. Then, for $1\leq p < \infty$,  and every compact set $K\subset \Rbb$,
	\begin{align}\label{boundDrift}
		\sup_{x\in K} \Ebb\left[\sup_{t\in[0,T]} |b(t,X_t^x,\mu_t)|^p\right] < \infty.
	\end{align}
	In particular, $b(\cdot,X_{\cdot}^x,\mu_{\cdot}) \in L^p([0,T] \times \Omega)$, $1\leq p < \infty$. Furthermore,
	\begin{align}\label{exoFunctionalBounding}
		\sup_{x\in K} \Ebb\left[\sup_{t\in [0,T]} |X_t^x|^p\right] < \infty.
	\end{align}
\end{lemma}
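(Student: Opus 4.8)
The plan is to prove the moment bound \eqref{exoFunctionalBounding} for the solution first, and then to read off \eqref{boundDrift} and the $L^p([0,T]\times\Omega)$-integrability of $b(\cdot,X_\cdot^x,\mu_\cdot)$ as immediate consequences of the linear growth assumption \eqref{linearGrowth}. The first observation is that, since $\mu\in\Ccal([0,T];\Pcal_1(\Rbb))$ and $t\mapsto\Kcal(\mu_t,\delta_0)$ is continuous on the compact set $[0,T]$ (by the reverse triangle inequality for $\Kcal$), the quantity $M:=\sup_{t\in[0,T]}\Kcal(\mu_t,\delta_0)$ is finite; hence \eqref{linearGrowth} gives $|b(t,X_t^x,\mu_t)|\le C(1+|X_t^x|+M)$ uniformly in $t$. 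Once \eqref{exoFunctionalBounding} is established this bound yields $\Ebb[\sup_{t}|b(t,X_t^x,\mu_t)|^p]\lesssim 1+M^p+\Ebb[\sup_t|X_t^x|^p]$, which is \eqref{boundDrift}, and then $\Ebb[\int_0^T|b(t,X_t^x,\mu_t)|^p\,dt]\le T\,\Ebb[\sup_t|b(t,X_t^x,\mu_t)|^p]<\infty$ gives the stated $L^p$-integrability, all uniformly over the compact set $K$.

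For \eqref{exoFunctionalBounding} I would argue pathwise. Since $(\Omega,\Fcal,\Fbb,\Pbb,B,X^x)$ is a weak solution of \eqref{helpMcKeanVlasov}, for $\Pbb$-a.e.\ $\omega$ the map $s\mapsto b^\mu(s,X_s^x(\omega))$ is integrable on $[0,T]$ and $X_t^x=x+\int_0^t b^\mu(s,X_s^x)\,ds+B_t$ holds for all $t$. Fixing such an $\omega$ and setting $Y_t:=\sup_{u\le t}|X_u^x(\omega)|$, which is finite and bounded on $[0,T]$ by continuity of the paths, the linear growth bound gives, for every $u\le t$, $|X_u^x|\le |x|+CT(1+M)+\sup_{r\le T}|B_r|+C\int_0^t Y_s\,ds$, so that $Y_t\le A(\omega)+C\int_0^t Y_s\,ds$ with $A(\omega):=|x|+CT(1+M)+\sup_{r\le T}|B_r(\omega)|$. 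Gr\"onwall's inequality in integral form then yields $\sup_{t\in[0,T]}|X_t^x|\le A(\omega)\,e^{CT}$.

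From here I would take $p$-th powers, use $(a+b+c)^p\le 3^{p-1}(a^p+b^p+c^p)$, take expectations, bound $\Ebb[\sup_{r\le T}|B_r|^p]<\infty$ by Doob's (or the Burkholder--Davis--Gundy) inequality, and use $\sup_{x\in K}|x|<\infty$ to conclude $\sup_{x\in K}\Ebb[\sup_{t\in[0,T]}|X_t^x|^p]\le 3^{p-1}e^{pCT}\big(\sup_{x\in K}|x|^p+(CT(1+M))^p+\Ebb[\sup_{r\le T}|B_r|^p]\big)<\infty$. I do not expect a serious obstacle here; the only points needing a little care are checking that the drift integral is $\Pbb$-a.s.\ finite, so that the pathwise Gr\"onwall step is legitimate (this is part of the definition of a weak solution of \eqref{helpMcKeanVlasov}), and keeping every constant independent of $x$ as $x$ ranges over the compact set $K$. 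As an alternative one could change measure via \Cref{equivalentBrownianMeasurePbb}, under which $X^x-x$ is a Brownian motion, and estimate by H\"older's inequality together with an exponential moment bound for the Dol\'eans--Dade density; but the direct pathwise route above seems shortest.
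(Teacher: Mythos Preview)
Your proposal is correct and follows essentially the same route as the paper: bound $\sup_{t}\Kcal(\mu_t,\delta_0)$ by continuity, use the linear growth assumption to obtain a pathwise integral inequality for $|X_t^x|$, apply Gr\"onwall to control $\sup_t|X_t^x|$ by $|x|$ and $\sup_t|B_t|$, and conclude via Doob's maximal inequality. Your version is in fact slightly tidier, since you apply Gr\"onwall to $Y_t=\sup_{u\le t}|X_u^x|$ directly, whereas the paper bounds $|X_t^x|$ and passes to the supremum somewhat informally.
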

\begin{proof}
	Note first that $\sup_{t\in[0,T]} \Kcal(\mu_t,\delta_0) dt$ is well-defined and finite. Indeed, since $\mu \in \Ccal([0,T];\Pcal_1(\Rbb))$ and $\Kcal(\cdot,\delta_0)$ is continuous, the supremum over $t\in[0,T]$ of $\Kcal(\mu_t, \delta_0)$ is attained. Furthermore, we can write
	\begin{align}\label{KantorovichToMean}
	\begin{split}
		\Kcal(\mu_t, \delta_0) &= \sup_{h\in \Lip_1} \left| \int_{\Rbb} h(y) \mu_t(dy) - h(0) \right| \leq \sup_{h\in \Lip_1} \int_{\Rbb} |h(y)-h(0)|  d\mu_t(dy) \\
		&\leq \int_{\Rbb} |y| \mu_t(dy) < \infty,
	\end{split}
	\end{align}
	where the last term is finite by the definition of $\Pcal_1(\Rbb)$. Therefore, we get due to the linear growth of $b$ that
	\begin{align*}
		|X_t^x| = \left| x + \int_0^t b(s,X_s^x, \mu_s) ds + B_t \right| \lesssim |x| + T + |B_t| + \int_0^t |X_s^x| ds.
	\end{align*}
	Thus, Grönwall's inequality yields that there exist constants $C_1$ and $C_2$ such that
	\begin{align}\label{eq:linearGrowthWithBrownian}
	\begin{split}
		|X_t^x| &\leq C_1 \left( 1+ |x| + \sup_{s\in[0,T]} |B_s^x| \right), \text{ and }\\
		|b(t,X_t^x,\mu_t)| &\leq C_2 \left( 1+ |x| + \sup_{s\in[0,T]} |B_s^x| \right).
	\end{split}
	\end{align}
	The boundedness of \eqref{boundDrift} is a direct consequence of \eqref{eq:linearGrowthWithBrownian} and Doob's maximal inequality.
\end{proof}

We define the complete probability space $(\Omega, \Fcal, \Qbb)$ carrying a Brownian motion $B$. In the following lemma we will prove the existence of an equivalent measure $\Pbb^{\mu}$ induced by the drift coefficient $b$.

\begin{lemma}\label{equivalentBrownianMeasurePbb}
	Let $b:[0,T] \times \Rbb \times \Pcal_1(\Rbb) \to \Rbb$ be a measurable function satisfying the linear growth condition \eqref{linearGrowth}. Then the Radon-Nikodym derivative
	\begin{align}\label{radonNikodymDerivative}
		\frac{d\Pbb^{\mu}}{d\Qbb} = \Ecal\left(\int_0^T b(s, B_s^x, \mu_s) dB_s \right)
	\end{align}
	is well-defined and yields a probability measure $\Pbb^{\mu} \approx \Qbb$.
	If $(\Omega, \Fcal, \Fbb, \Pbb^{\mu}, B^{\mu},X^x)$ is a weak solution of \eqref{helpMcKeanVlasov},  the Radon-Nikodym derivative
	\begin{align}\label{radonNikodymDerivative2}
		\frac{d\Qbb^{\mu}}{d\Pbb^{\mu}} = \Ecal\left( - \int_0^T b(s, X_s^x, \mu_s) dB^{\mu}_s \right)
	\end{align}
	is well-defined and yields a probability measure $\Qbb^{\mu}$ equivalent to $\Pbb^{\mu}$. Moreover, $(X_t^x)_{t\in[0,T]}$ is a $\Qbb^{\mu}$-Brownian motion starting in $x$.
\end{lemma}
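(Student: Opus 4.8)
The plan is to reduce both assertions to the statement that the relevant Dol\'ean-Dade exponential is a true martingale of expectation one; the equivalence of the measures then follows from the strict positivity of the exponential, and the Brownian-motion claim is a direct consequence of Girsanov's theorem.

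\emph{Well-definedness.} Since $\mu \in \Ccal([0,T];\Pcal_1(\Rbb))$ we have $\sup_{s\in[0,T]}\Kcal(\mu_s,\delta_0)<\infty$ (cf.\ the proof of \Cref{pNorm}), and both $s\mapsto B_s^x$ and, for a weak solution, $s\mapsto X_s^x$ are $\Pbb$-a.s.\ continuous on $[0,T]$. By the linear growth \eqref{linearGrowth} the integrands $s\mapsto b(s,B_s^x,\mu_s)$ and $s\mapsto b(s,X_s^x,\mu_s)$ are therefore $\Pbb$-a.s.\ bounded on $[0,T]$, so the stochastic integrals in \eqref{radonNikodymDerivative} and \eqref{radonNikodymDerivative2} are well-defined and the two exponentials are strictly positive continuous local martingales.

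\emph{Martingale property.} This is the only genuinely non-trivial step, since under mere linear growth Novikov's condition fails globally. I would split $[0,T]$ into finitely many subintervals $[t_{k-1},t_k]$ of length at most $\delta$, and bound, $\Pbb$-a.s.,
\begin{align*}
	\int_{t_{k-1}}^{t_k}\big|b(s,B_s^x,\mu_s)\big|^2\,ds \;\lesssim\; \delta\Big(1+|x|^2+\sup_{u\in[0,T]}|B_u|^2\Big)
\end{align*}
using \eqref{linearGrowth}, and analogously $\int_{t_{k-1}}^{t_k}|b(s,X_s^x,\mu_s)|^2\,ds\lesssim\delta(1+|x|^2+\sup_{u\in[0,T]}|B_u^\mu|^2)$ using the a priori bound \eqref{eq:linearGrowthWithBrownian} from \Cref{pNorm}. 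By the reflection principle, $\sup_{u\in[0,T]}|B_u|^2$ (resp.\ $\sup_{u\in[0,T]}|B_u^\mu|^2$) has finite exponential moments of sufficiently small order, so for $\delta$ small enough --- depending only on $T$, the linear-growth constant and $\sup_{s\in[0,T]}\Kcal(\mu_s,\delta_0)$ --- the Novikov condition $\Ebb[\exp(\tfrac12\int_{t_{k-1}}^{t_k}|b|^2\,ds)]<\infty$ holds on each subinterval. The standard piecewise-Novikov argument (equivalently, the Bene\v{s} condition) then yields that $\Ecal(\int_0^{\cdot}b(s,B_s^x,\mu_s)\,dB_s)$ is a true $\Qbb$-martingale and $\Ecal(-\int_0^{\cdot}b(s,X_s^x,\mu_s)\,dB_s^\mu)$ a true $\Pbb^\mu$-martingale, both of expectation one.

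\emph{Conclusion via Girsanov.} Once $\Ebb_{\Qbb}[\Ecal(\int_0^T b(s,B_s^x,\mu_s)\,dB_s)]=1$, formula \eqref{radonNikodymDerivative} defines a probability measure $\Pbb^\mu$, and strict positivity of the density gives $\Pbb^\mu\approx\Qbb$ with $\frac{d\Qbb}{d\Pbb^\mu}=\Ecal(\int_0^T b\,dB)^{-1}$. For the second assertion, given a weak solution $(\Omega,\Fcal,\Fbb,\Pbb^\mu,B^\mu,X^x)$ of \eqref{helpMcKeanVlasov}, the martingale property just established makes \eqref{radonNikodymDerivative2} the density of a probability measure $\Qbb^\mu\approx\Pbb^\mu$. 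Girsanov's theorem applied to the density $\Ecal(-\int_0^{\cdot}b(s,X_s^x,\mu_s)\,dB_s^\mu)$ shows that $W_t:=B_t^\mu+\int_0^t b(s,X_s^x,\mu_s)\,ds$ is a $\Qbb^\mu$-Brownian motion; since the weak-solution equation reads $X_t^x=x+\int_0^t b(s,X_s^x,\mu_s)\,ds+B_t^\mu=x+W_t$, the process $X^x$ is a $\Qbb^\mu$-Brownian motion starting in $x$. The main obstacle throughout is the martingale property of the exponentials under the linear-growth assumption; the remaining arguments are routine applications of Girsanov's theorem.
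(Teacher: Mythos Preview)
Your proof is correct and takes essentially the same approach as the paper: the paper simply writes ``This is a direct consequence of Bene\v{s}' result and \eqref{eq:linearGrowthWithBrownian},'' while you spell out the piecewise-Novikov argument that underlies Bene\v{s}' criterion and explicitly invoke the a priori bound \eqref{eq:linearGrowthWithBrownian} for the second assertion. The content is the same; your version is just more detailed.
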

\begin{proof} 
This is a direct consequence of Bene$\check{s}$' result and \eqref{eq:linearGrowthWithBrownian}.
\end{proof}

\begin{lemma} \label{boundMeasureChange}
	Let $b:[0,T] \times \Rbb \times \Pcal_1(\Rbb) \to \Rbb$ be a measurable function satisfying the linear growth condition \eqref{linearGrowth}. 
	Then, there exists an $\varepsilon > 0$ such that for any $\mu \in \Ccal([0,T];\Pcal_1(\Rbb))$,
	\begin{align}\label{eq:boundMeasureChange}
		\Ebb \left[ \Ecal \left( \int_0^T b(u,B_u^x, \mu_u) dB_u \right)^{1+\varepsilon} \right] < \infty.
	\end{align}
\end{lemma}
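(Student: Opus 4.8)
The plan is to push the exponential local martingale inside a change of measure, which reduces \eqref{eq:boundMeasureChange} to an exponential moment of $\sup_{t\le T}|\tilde B_t|^2$ for a Brownian motion $\tilde B$; this is finite as soon as the exponent is small, and the exponent can be made small by shrinking $\varepsilon$. First I would record the basic bounds. Since $\mu\in\Ccal([0,T];\Pcal_1(\Rbb))$ and $\Kcal(\cdot,\delta_0)$ is continuous, $M_\mu:=\sup_{t\in[0,T]}\Kcal(\mu_t,\delta_0)<\infty$, so by \eqref{linearGrowth}, writing $\beta_u:=b(u,B_u^x,\mu_u)$, we get $|\beta_u|\le C(1+|B_u^x|+\Kcal(\mu_u,\delta_0))\le a_\mu+C|B_u|$ with $a_\mu:=C(1+|x|+M_\mu)$, and in particular $|\beta_u|\le K_\mu(1+\sup_{s\le u}|B_s|)$ for some constant $K_\mu$. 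Hence $\int_0^T\beta_u^2\,du<\infty$ a.s., so $N_t:=\int_0^t\beta_u\,dB_u$ is a well-defined continuous local martingale, and by Bene\v{s}' criterion (linear growth in the path, exactly as invoked for \Cref{equivalentBrownianMeasurePbb}) the process $\mathcal{E}((1+\varepsilon)N)$ is a \emph{true} martingale for every $\varepsilon\ge0$.

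Fix $r:=1+\varepsilon$ and use the elementary identity $\mathcal{E}(N)_T^{\,r}=\exp\{rN_T-\tfrac r2\langle N\rangle_T\}=\mathcal{E}(rN)_T\exp\{\tfrac{r(r-1)}2\int_0^T\beta_u^2\,du\}$. Since $\mathcal{E}(rN)$ is a martingale, $d\widetilde{\Pbb}:=\mathcal{E}(rN)_T\,d\Pbb$ defines a probability measure, and by Girsanov's theorem $\tilde B_t:=B_t-r\int_0^t\beta_u\,du$ is a $\widetilde{\Pbb}$-Brownian motion. Consequently
\[
 \Ebb\bigl[\mathcal{E}(N)_T^{\,r}\bigr]=\Ebb_{\widetilde{\Pbb}}\Bigl[\exp\Bigl\{\tfrac{r(r-1)}2\int_0^T\beta_u^2\,du\Bigr\}\Bigr],
\]
so it remains to bound the right-hand side. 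Under $\widetilde{\Pbb}$ we have $|B_t|\le|\tilde B_t|+r\int_0^t|\beta_u|\,du\le\sup_{s\le T}|\tilde B_s|+ra_\mu T+rC\int_0^t|B_s|\,ds$, so Gr\"onwall gives $\sup_{t\le T}|B_t|\le(\sup_{t\le T}|\tilde B_t|+ra_\mu T)e^{rCT}$. Inserting this into $\int_0^T\beta_u^2\,du\le 2a_\mu^2T+2C^2T\sup_{t\le T}|B_t|^2$ yields a bound of the form $\tfrac{r(r-1)}2\int_0^T\beta_u^2\,du\le c_1(\mu)+c_2(\varepsilon)\sup_{t\in[0,T]}|\tilde B_t|^2$ with $c_1(\mu)<\infty$ and $c_2(\varepsilon):=2r(r-1)C^2Te^{2rCT}$.

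Since $\tilde B$ is a $\widetilde{\Pbb}$-Brownian motion, the reflection principle gives $\widetilde{\Pbb}(\sup_{t\le T}|\tilde B_t|^2>s)\le 4e^{-s/(2T)}$, hence $\Ebb_{\widetilde{\Pbb}}[\exp\{c_2(\varepsilon)\sup_{t\le T}|\tilde B_t|^2\}]<\infty$ whenever $c_2(\varepsilon)<\tfrac1{2T}$. As $c_2(\varepsilon)=2(1+\varepsilon)\varepsilon C^2Te^{2(1+\varepsilon)CT}\to0$ when $\varepsilon\downarrow0$, one can fix $\varepsilon>0$, depending only on $C$ and $T$ and therefore uniformly in $\mu$, so that $c_2(\varepsilon)<\tfrac1{2T}$; combining the displays above then gives \eqref{eq:boundMeasureChange}.

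The step I expect to be the real obstacle is exactly the one the change of measure circumvents: a naive H\"older split of $\mathcal{E}(N)_T^{\,r}$ reproduces a factor $\mathcal{E}(rN)_T^{\,p}$ of the very type one is trying to control, while estimating $\exp\{c\int_0^T\beta_u^2\,du\}$ directly under $\Pbb$ forces a smallness condition on $CT$; absorbing $\mathcal{E}(rN)_T$ into the measure eliminates it from the computation and turns $\int_0^T\beta_u^2\,du$ into a functional of a Brownian motion with linear-growth drift, for which the Gr\"onwall-plus-Gaussian argument works for arbitrary $C$ and $T$. Care is only needed to check that the measure $\widetilde{\Pbb}$ is genuinely a probability measure (this is where Bene\v{s}' result, rather than the mere supermartingale property, is used) and that every $\mu$-dependent constant entering $c_1(\mu)$ is finite while the threshold condition on $\varepsilon$ stays $\mu$-free.
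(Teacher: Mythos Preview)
Your argument is correct and follows essentially the same route as the paper: both proofs use the identity $\Ecal(N)_T^{1+\varepsilon}=\Ecal((1+\varepsilon)N)_T\exp\{\tfrac{\varepsilon(1+\varepsilon)}{2}\langle N\rangle_T\}$, absorb $\Ecal((1+\varepsilon)N)_T$ via Girsanov (relying on Bene\v{s} for the martingale property), apply Gr\"onwall to the resulting linear-growth drift, and reduce to an exponential moment of $\sup_{t\le T}|B_t|^2$ with a coefficient that vanishes as $\varepsilon\downarrow0$. Your bookkeeping is in fact slightly sharper than the paper's: by keeping the $\mu$-dependence in the additive constant $a_\mu$ rather than in the multiplicative one, you make it transparent that the threshold condition $c_2(\varepsilon)<\tfrac{1}{2T}$ is $\mu$-free, which is exactly what the quantifier order in the statement requires.
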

\begin{proof}
	First, we rewrite
	\begin{align*}
		 \Ebb &\left[ \Ecal \left( \int_0^T b(u,B_u^x, \mu_u) dB_u \right)^{1+\varepsilon} \right] \\
		 &= \Ebb \left[ \exp \left\lbrace \int_0^T (1+\varepsilon) b(u,B_u^x, \mu_u)dB_u - \frac{1}{2} \int_0^T (1+\varepsilon) \vert b(u,B_u^x, \mu_u) \vert^2 du \right\rbrace \right] \\
		 &= \Ebb \left[ \Ecal \left( \int_0^T (1+\varepsilon) b(u,B_u^x, \mu_u) dB_u \right) \exp \left\lbrace \frac{1}{2} \int_0^T \varepsilon(1+\varepsilon) \vert b(u,B_u^x, \mu_u)\vert^2 du \right\rbrace \right] \\
		 &= \Ebb \left[ \exp \left\lbrace \frac{1}{2} \int_0^T \varepsilon(1+\varepsilon) \vert b(u,X_u^{\varepsilon,x}, \mu_u)\vert^2 du \right\rbrace \right],
	\end{align*}
	where in the last step by Girsanov's theorem $X^{\varepsilon,x}$ denotes a weak solution of
	\begin{align*}
		dX_t^{\varepsilon ,x} &= (1+ \varepsilon) b(t,X_t^{\varepsilon ,x}, \mu_t) dt + dB_t, \quad X_0^{\varepsilon ,x} = x \in \Rbb, \quad t\in[0,T].
	\end{align*}
	Since $b$ satisfies the linear growth condition \eqref{linearGrowth}, we have that
	\begin{align*}
		\vert X_t^{\varepsilon,x} \vert &\leq \vert x \vert + (1+\varepsilon) \int_0^t \vert b(u,X_u^{\varepsilon,x}, \mu_u)\vert du + \vert B_t \vert \\
		&\leq \vert x \vert + C(1+ \varepsilon) \int_0^t (1+\vert X_u^{\varepsilon,x}\vert + \Kcal(\mu_u, \delta_0)) du + \vert B_t\vert.
	\end{align*}
	Therefore, Grönwall's inequality gives us
	\begin{align*}
		\vert X_t^{\varepsilon,x}\vert \leq (1+ \varepsilon) \left( T + \vert x \vert + \sup_{s\in[0,T]} \vert B_s \vert + \sup_{u\in[0,T]} \Kcal(\mu_u,\delta_0) \right) e^{C(1+\varepsilon)T},
	\end{align*}
	and thus, we can find a constant $C_{\varepsilon,\mu}$ depending on $\varepsilon$, $\mu$ and $T$ such that $\lim_{\varepsilon \to 0} C_{\varepsilon,\mu}$ exists, is finite, and
	\begin{align*}
		\vert b(t, X_t^{\varepsilon,x}, \mu_t) \vert \leq C_{\varepsilon,\mu} \left( 1 + \vert x \vert + \sup_{s\in[0,T]} \vert B_s \vert \right).
	\end{align*}
	Hence,
	\begin{align*}
		\Ebb &\left[ \exp \left\lbrace \frac{1}{2} \int_0^T \varepsilon (1+ \varepsilon) |b(u, X_u^{\varepsilon,x}, \mu_u)|^2 du \right\rbrace \right] \\
		&\leq \Ebb \left[ \exp \left\lbrace \frac{1}{2} T \varepsilon (1+ \varepsilon)C_{\varepsilon,\mu}^2 \left(1+ \vert x \vert + \sup_{s\in[0,T]} \vert B_s \vert \right)^2 \right\rbrace \right].
	\end{align*}
	Clearly, $\lim_{\varepsilon \to 0} \varepsilon(1+\varepsilon) C_{\varepsilon,\mu}^2 = 0$ and therefore we can choose $\varepsilon >0$ sufficiently small such that \eqref{eq:boundMeasureChange} holds.
\end{proof}

\begin{lemma}\label{boundsSolution}
	Let $b:[0,T] \times \Rbb \times \Pcal_1(\Rbb) \to \Rbb$ be a measurable function satisfying the linear growth condition \eqref{linearGrowth}. Furthermore, let $(\Omega, \Fcal, \Gbb, \Pbb, B, X^x)$ be a weak solution of the mean-field SDE \eqref{mainMcKeanVlasov}. Then,
	\begin{align}\label{boundDriftSolution}
		\vert b(t, X_t^x, \Pbb_{X_t^x})\vert \leq C \left(1+\vert x \vert + \sup_{s\in[0,T]} \vert B_s \vert \right)
	\end{align}
	for some constant $C>0$. Consequently, for any compact set $K\subset \Rbb$, and $1\leq p < \infty$, there exists $\varepsilon >0$ such that the following boundaries hold:
	\begin{align*}
		\sup_{x\in K} \Ebb \left[\sup_{t\in[0,T]} |b(t,X_t^x,\Pbb_{X_t^x})|^p\right] < \infty\\
		\sup_{x\in K} \sup_{t\in[0,T]} \Ebb \left[|X_t^x|^p\right] < \infty \\
		\sup_{x\in K} \Ebb \left[ \Ecal \left( \int_0^T b(u,B_u^x, \Pbb_{X_u^x}) dB_u \right)^{1+\varepsilon} \right] < \infty
	\end{align*}
\end{lemma}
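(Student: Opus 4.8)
The plan is to reduce \Cref{boundsSolution} to \Cref{pNorm} and \Cref{boundMeasureChange}, which already treat the case of a \emph{prescribed} measure curve. Given a weak solution $X^x$ of \eqref{mainMcKeanVlasov}, set $\mu_t:=\Pbb_{X_t^x}$; then $X^x$ solves the (non mean-field) SDE \eqref{helpMcKeanVlasov} driven by this particular $\mu$, and the only ingredient not already available is the a priori bound
\begin{align*}
	M:=\sup_{t\in[0,T]}\Kcal\bigl(\Pbb_{X_t^x},\delta_0\bigr)\;\lesssim\;1+|x|\;<\;\infty .
\end{align*}
Once this is known, $b^{\mu}(t,y)=b(t,y,\Pbb_{X_t^x})$ satisfies $|b^{\mu}(t,y)|\le C(1+|y|+M)$, so it fits the hypotheses used in \Cref{pNorm} and \Cref{boundMeasureChange} (whose proofs use only $\sup_t\Kcal(\mu_t,\delta_0)<\infty$, not continuity of $t\mapsto\mu_t$), and all three displayed bounds follow as there, with the $M$-dependence absorbed into the constants via $M\lesssim 1+|x|$ and the compactness of $K$.

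To obtain the bound on $M$ I would argue by Grönwall together with a localization. By the definition of a weak solution, $m(t):=\Ebb[|X_t^x|]<\infty$ for every $t$, and $\Kcal(\Pbb_{X_t^x},\delta_0)\le\Ebb[|X_t^x|]=m(t)$ by the estimate \eqref{KantorovichToMean}. Integrating the SDE, using the linear growth \eqref{linearGrowth} and again $\Kcal(\Pbb_{X_s^x},\delta_0)\le m(s)$, one is led to an inequality of the form
\begin{align*}
	m(t)\;\le\;|x|+\Ebb\Bigl[\sup_{s\le T}|B_s|\Bigr]+Ct+C\int_0^t m(s)\,ds .
\end{align*}
Since a priori only $m(t)<\infty$ pointwise is known (not that $m$ is locally integrable), I would make this rigorous by first running the estimate for the stopped process $X^x_{\cdot\wedge\tau_N}$, with $\tau_N:=\inf\{t:|X^x_t|>N\}\wedge T$ — for which the corresponding expectation is bounded by $N$, so that Grönwall genuinely applies — then passing to $N\to\infty$ by Fatou's lemma, and finally bootstrapping over time-subintervals short enough that $CTe^{CT}<1$ to exclude $\int_0^T m(s)\,ds=\infty$. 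This yields $\sup_{t\in[0,T]}m(t)\lesssim 1+|x|$, hence the claimed bound on $M$.

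With $M<\infty$ available, the pathwise Grönwall argument from the proof of \Cref{pNorm} applied to $b^{\mu}$ gives $|X_t^x|\le C(1+|x|+\sup_{s\le T}|B_s^x|)$ and then \eqref{boundDriftSolution}. The first two remaining estimates follow from \eqref{boundDriftSolution} by taking $p$-th moments and Doob's maximal inequality for $\sup_{s\le T}|B_s^x|$, uniformly over $x\in K$. For the last one I would copy the proof of \Cref{boundMeasureChange} with $\mu_u=\Pbb_{X_u^x}$: the Girsanov reformulation rewrites the $(1+\varepsilon)$-moment of the Doléans exponential as $\Ebb[\exp\{\tfrac12\varepsilon(1+\varepsilon)\int_0^T|b(u,X_u^{\varepsilon,x},\Pbb_{X_u^x})|^2du\}]$, where $X^{\varepsilon,x}$ solves an ordinary SDE with linear-growth drift; the Gaussian-type bound obtained there is finite for $\varepsilon$ small, and since $M$ (and $|x|$) are bounded on the compact set $K$, a single $\varepsilon>0$ and a single finite bound serve for all $x\in K$. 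The only genuinely delicate point is the a priori integrability of $t\mapsto\Ebb[|X_t^x|]$ in the second step; everything else is a transcription of the prescribed-measure estimates already established in \Cref{pNorm} and \Cref{boundMeasureChange}.
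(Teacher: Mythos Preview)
Your approach is essentially identical to the paper's: reduce to \Cref{pNorm} and \Cref{boundMeasureChange} by first establishing $\sup_{t}\Ebb[|X_t^x|]\lesssim 1+|x|$ via Grönwall, using $\Kcal(\Pbb_{X_t^x},\delta_0)\le\Ebb[|X_t^x|]$. The paper applies Grönwall to $m(t)=\Ebb[|X_t^x|]$ directly without commenting on its local integrability, so your localization via $\tau_N$ and Fatou is in fact more careful than the original on precisely the point you flag as delicate.
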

\begin{proof}
	Due to the proofs of \Cref{pNorm} and \Cref{boundMeasureChange}, it suffices to show \eqref{boundDriftSolution}. Note first that $\Kcal(\Pbb_{X_t^x},\delta_0) \leq \Ebb[|X_t^x|]$ for every $t\in [0,T]$ by \eqref{KantorovichToMean}. Hence, it is enough to show that $\Ebb[|X_t^x|] \leq C( 1 + |x| )$ for every $t\in [0,T]$ and some constant $C>0$. Since $(X_t^x)_{t\in[0,T]}$ is a weak solution of \eqref{mainMcKeanVlasov} and $b$ fulfills the linear growth condition \eqref{linearGrowth}, we get
	\begin{align*}
		\Ebb[|X_t^x|] &\lesssim |x| + \int_0^t 1 + \Ebb[|X_s^x|] + \Kcal(\Pbb_{X_s^x},\delta_0) ds + \Ebb[|B_t|] \lesssim  1 + |x| + \int_0^t \Ebb[|X_s^x|] ds.
	\end{align*}
	Consequently $\Ebb[|X_t^x|] \leq C( 1 + |x| )$ by Grönwall's inequality which concludes the proof.
\end{proof}

\begin{lemma}\label{boundLocalTime}
	Suppose the drift coefficient $b:[0,T]\times \Rbb \times \Pcal_1(\Rbb) \to \Rbb$ is in the decomposable form \eqref{formDrift} and there exists a constant $C>0$ such that $(\mu \mapsto b(t,y,\mu)) \in \Lip_C(\Pcal_1(\Rbb))$ for every $t\in[0,T]$ and $y\in\Rbb$. Let $(X_t^x)_{t\in[0,T]}$ be the unique strong solution of \eqref{mainMcKeanVlasov}. Furthermore, $\lbrace b_n \rbrace_{n\geq 1}$ is the approximating sequence of $b$ as defined in \eqref{approximatingDrift} and $(X_t^{n,x})_{t\in[0,T]}$, $n\geq 1$, the corresponding unique strong solutions of \eqref{approximatingMFSDEExp}. Then, for all $\lambda \in \Rbb$ and any compact subset $K \subset \Rbb$,
	\begin{align*}
		\sup_{n\geq 0} \sup_{s,t\in[0,T]} \sup_{x\in K} \Ebb \left[ \exp \left\lbrace - \lambda \int_s^t \int_{\Rbb} b_n\left(s,y,\Pbb_{X_s^{n,x}}\right) L^{B^x}(ds,dy) \right\rbrace \right] < \infty.
	\end{align*}
\end{lemma}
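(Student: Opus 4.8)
The plan is to exploit the decomposition $b_n=\btilde_n+\bhat$ of \eqref{approximatingDrift} (with the convention $b_0=b=\btilde+\bhat$) and to treat the Lipschitz part and the bounded part of the local-time space integral separately. Writing
\begin{gather*}
I_n^{\btilde}(s,t):=\int_s^t\!\!\int_\Rbb \btilde_n\big(u,y,\Pbb_{X_u^{n,x}}\big)\,L^{B^x}(du,dy),\\
I_n^{\bhat}(s,t):=\int_s^t\!\!\int_\Rbb \bhat\big(u,y,\Pbb_{X_u^{n,x}}\big)\,L^{B^x}(du,dy),
\end{gather*}
the Cauchy--Schwarz inequality gives $\Ebb[\exp\{-\lambda(I_n^{\btilde}(s,t)+I_n^{\bhat}(s,t))\}]\le\Ebb[\exp\{-2\lambda I_n^{\btilde}(s,t)\}]^{1/2}\,\Ebb[\exp\{-2\lambda I_n^{\bhat}(s,t)\}]^{1/2}$, so it is enough to bound each of the two factors uniformly in $n\ge0$, $s,t\in[0,T]$ and $x\in K$.

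For the first factor I would use the integration-by-parts property of the local-time space integral (obtained by comparing It\^{o}'s formula with the It\^{o}-type formula defining this integral in \cite{Eisenbaum_LocalTimeIntegral}, see also \cite{MeyerBrandisBanosDuedahlProske_ComputingDeltas}): since for each $u$ the map $y\mapsto\btilde_n(u,y,\Pbb_{X_u^{n,x}})$ is Lipschitz with a constant $C$ that is independent of $n$ — this is \eqref{lipschitzContinuous} for $n=0$ and holds for $n\ge1$ because $\btilde_n(u,\cdot,\mu)\in\Ccal^{1,1}_{b,C}(\Rbb)$, with $C$ kept uniform by the choice of the approximating sequence — one has $\Pbb$-a.s.\ $I_n^{\btilde}(s,t)=-\int_s^t\partial_2\btilde_n(u,B_u^x,\Pbb_{X_u^{n,x}})\,du$, whence $|I_n^{\btilde}(s,t)|\le CT$ deterministically and $\Ebb[\exp\{-2\lambda I_n^{\btilde}(s,t)\}]\le e^{2|\lambda|CT}$. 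For the second factor the integrand $y\mapsto\bhat(u,y,\Pbb_{X_u^{n,x}})$ is bounded by $\|\bhat\|_\infty$ uniformly in $n$, $u$ and $x$, and the exponential integrability of the local-time space integral of a bounded function — with a bound depending only on $|\lambda|$, $\|\bhat\|_\infty$ and $T$ — is precisely the estimate established for Brownian motion in \cite{MeyerBrandisBanosDuedahlProske_ComputingDeltas} in the construction of strong solutions with bounded drift; as this bound is insensitive to $n$, $s$, $t$ and $x\in K$, combining the two factors proves the lemma.

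The main obstacle is to justify the first step for the non-smooth member $b_0=b$: for $n\ge1$ the identity for $I_n^{\btilde}$ holds directly because $\btilde_n$ is $C^1$ in space, whereas for $\btilde$ only Lipschitz in space one has to mollify $\btilde(u,\cdot,\Pbb_{X_u^x})$ and pass to the limit, using the continuity (closability) of the local-time space integral, the uniform bound $\sup_{n}\|\partial_2\btilde_n\|_\infty\le C$ and dominated convergence; checking that this Lipschitz constant really is uniform over the whole approximating family is the one point that forces one back to how the $b_n$ in \eqref{approximatingDrift} were constructed. The second, analytically heavier ingredient — the exponential-moment bound for the bounded part — is not reproved here but taken over from \cite{MeyerBrandisBanosDuedahlProske_ComputingDeltas}.
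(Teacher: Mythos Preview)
Your approach is correct and is essentially the paper's own argument: split $b_n$ into its two parts, use Cauchy--Schwarz (implicit in the paper), reduce the Lipschitz-in-$y$ piece to an ordinary $du$-integral of the bounded spatial derivative, and quote the exponential-moment estimate for the bounded piece from \cite{MeyerBrandisBanosDuedahlProske_ComputingDeltas}. One point of potential confusion when you compare with the paper: in the paper's proof the roles of $\btilde_n$ and $\bhat$ appear swapped relative to the conventions in \eqref{formDrift} and \eqref{approximatingDrift} --- the proof treats $\btilde_n$ as the uniformly bounded part (to which \cite{MeyerBrandisBanosDuedahlProske_ComputingDeltas}[Lemma~A.2] is applied) and asserts $\|\partial_2\bhat\|_\infty<\infty$. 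Your write-up follows the notational conventions of \eqref{formDrift}, so the two proofs look mirrored but are the same in substance. Your extra care about $n=0$ (mollifying $\btilde$ to justify the integration-by-parts identity when $\btilde$ is merely Lipschitz rather than $C^1$) is a detail the paper glosses over; it is handled exactly as you indicate.
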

\begin{proof}
	Recall that $b_n$ can be decomposed into $b_n = \btilde_n + \bhat$ for all $n\geq 0$. Here $\btilde_n$ is uniformly bounded in $n\geq 0$. Hence, by \cite{MeyerBrandisBanosDuedahlProske_ComputingDeltas}[Lemma A.2]
		\begin{align*}
			\sup_{n\geq 0} \sup_{s,t\in[0,T]} \sup_{x\in K} \Ebb \left[ \exp \left\lbrace - \lambda \int_s^t \int_{\Rbb} \btilde_n\left(s,y,\Pbb_{X_s^{n,x}}\right) L^{B^x}(ds,dy) \right\rbrace \right] < \infty.
		\end{align*}
		Moreover, $\Vert \partial_2 \bhat \Vert_{\infty} < \infty$ by definition. Consequently,
		\begin{align*}
			\sup_{n\geq 0} \sup_{s,t\in[0,T]} \sup_{x\in K} &\Ebb \left[ \exp \left\lbrace - \lambda \int_s^t \int_{\Rbb} \bhat\left(s,y,\Pbb_{X_s^{n,x}}\right) L^{B^x}(ds,dy) \right\rbrace \right] \\
			&= \sup_{n\geq 0} \sup_{s,t\in[0,T]} \sup_{x\in K} \Ebb \left[ \exp \left\lbrace \lambda \int_s^t \partial_2 \bhat \left(s,B_s^x,\Pbb_{X_s^{n,x}}\right) ds \right\rbrace \right] < \infty.
		\end{align*}
\end{proof}

\begin{lemma}\label{uniformConvergenceEcal}
	Suppose the drift coefficient $b:[0,T]\times \Rbb \times \Pcal_1(\Rbb) \to \Rbb$ is in the decomposable form \eqref{formDrift} and there exists a constant $C>0$ such that $(\mu \mapsto b(t,y,\mu)) \in \Lip_C(\Pcal_1(\Rbb))$ for every $t\in[0,T]$ and $y\in\Rbb$. Let $(X_t^x)_{t\in[0,T]}$ be the unique strong solution of \eqref{mainMcKeanVlasov}. Furthermore, $\lbrace b_n \rbrace_{n\geq 1}$ is the approximating sequence of $b$ as defined in \eqref{approximatingDrift} and $(X_t^{n,x})_{t\in[0,T]}$, $n\geq 1$, the corresponding unique strong solutions of \eqref{approximatingMFSDEExp}. Then for any compact subset $K \subset \Rbb$ and $q:= \frac{2(1+\varepsilon)}{2+\varepsilon}$, $\varepsilon>0$ sufficiently small with regard to \Cref{boundsSolution},
	\begin{align*}
		\sup_{x\in K} \Ebb \left[ \left| \Ecal \left( \int_0^T b_n(t,B_t^x,\Pbb_{X_t^{n,x}}) dB_t \right) - \Ecal \left( \int_0^T b(t,B_t^x,\Pbb_{X_t^x}) dB_t \right) \right|^q \right]^{\frac{1}{q}} \xrightarrow[n\to\infty]{} 0.
	\end{align*}
\end{lemma}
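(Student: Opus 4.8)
The plan is to follow the template of the proof of \Cref{weakConvergence}, but to produce a genuine $L^q$-estimate instead of a bound on the Wiener transform, while carrying the uniformity in $x\in K$ through every step. Write, for $n\ge 0$ (with $b_0:=b$ and $X^{0,x}:=X^x$), $p:=\frac{1+\varepsilon}{\varepsilon}$ and
\begin{align*}
	\Ecal_n^x := \Ecal\left(\int_0^T b_n(t,B_t^x,\Pbb_{X_t^{n,x}})\,dB_t\right) = \exp\left\{M_n^x - \tfrac12\langle M^x_n\rangle_T\right\}, \qquad M_n^x := \int_0^{\cdot} b_n(t,B_t^x,\Pbb_{X_t^{n,x}})\,dB_t.
\end{align*}
Applying the elementary inequality \eqref{exponentialInequality} to the exponents of $\Ecal_n^x$ and $\Ecal_0^x$, followed by Hölder's inequality with exponents $r=\frac{2+\varepsilon}{\varepsilon}$ and $r'=\frac{2+\varepsilon}{2}$ (chosen so that $qr=2p$ and $qr'=1+\varepsilon$) and Minkowski's inequality, one obtains
\begin{align*}
	\Ebb\left[|\Ecal_n^x-\Ecal_0^x|^q\right]^{\frac1q} \lesssim \left(\Ebb\left[(\Ecal_n^x)^{1+\varepsilon}\right]^{\frac1{1+\varepsilon}} + \Ebb\left[(\Ecal_0^x)^{1+\varepsilon}\right]^{\frac1{1+\varepsilon}}\right)\Ebb\left[|A_n^x|^{2p}\right]^{\frac1{2p}},
\end{align*}
where $A_n^x := \int_0^T\big(b_n(t,B_t^x,\Pbb_{X_t^{n,x}})-b(t,B_t^x,\Pbb_{X_t^x})\big)\,dB_t - \tfrac12\int_0^T\big(|b_n(t,B_t^x,\Pbb_{X_t^{n,x}})|^2-|b(t,B_t^x,\Pbb_{X_t^x})|^2\big)\,dt$. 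Since every $b_n$ inherits the linear growth constant of $b$ and $X^{n,x}$ solves the $n$-th mean-field SDE \eqref{approximatingMFSDEExp}, the third estimate in \Cref{boundsSolution} — with $\varepsilon$ chosen uniformly in $n$, which the proof of \Cref{boundMeasureChange} permits — bounds the first factor by a constant independent of $n\ge0$ and $x\in K$. Hence it remains to show $\sup_{x\in K}\Ebb[|A_n^x|^{2p}]^{1/(2p)}\to 0$.

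For the second step I would estimate $A_n^x$ by Minkowski's and the Burkholder--Davis--Gundy inequality exactly as in \Cref{weakConvergence}, which reduces $\Ebb[|A_n^x|^{2p}]^{1/(2p)}$ to a bound of the form $\Ebb[(\int_0^T|\Delta_n^x(t)|^2\,dt)^{p}]^{1/(2p)}$ for the stochastic-integral part and — after factoring out the growth term $C(1+|x|+\sup_{s}|B_s|)$ from $|b_n|^2-|b|^2=(b_n-b)(b_n+b)$ via Hölder and using \Cref{boundsSolution} — a bound of the same shape for the Lebesgue-integral part, where $\Delta_n^x(t):=b_n(t,B_t^x,\Pbb_{X_t^{n,x}})-b(t,B_t^x,\Pbb_{X_t^x})$. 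Splitting
\begin{align*}
	\Delta_n^x(t) = \left[b_n(t,B_t^x,\Pbb_{X_t^{n,x}})-b_n(t,B_t^x,\Pbb_{X_t^x})\right] + \left[b_n(t,B_t^x,\Pbb_{X_t^x})-b(t,B_t^x,\Pbb_{X_t^x})\right],
\end{align*}
the first bracket is dominated by $C\,\Kcal(\Pbb_{X_t^{n,x}},\Pbb_{X_t^x})\le C\,\Ebb[|X_t^{n,x}-X_t^x|]$, which vanishes as $n\to\infty$ by \Cref{convergenceApproximating}, while the second bracket tends to $0$ for $dt\otimes d\Pbb$-almost every $(t,\omega)$ by the construction of the approximating sequence \eqref{approximatingDrift}. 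Both brackets are dominated, uniformly in $n\ge 0$ and $x\in K$, by an integrable multiple of $(1+|x|+\sup_s|B_s|)$ coming from the common linear growth bound, so dominated convergence yields $\Ebb[|A_n^x|^{2p}]^{1/(2p)}\to 0$ for each fixed $x$.

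The main obstacle is upgrading this pointwise-in-$x$ convergence to the uniform statement $\sup_{x\in K}(\cdots)\to 0$. I would handle it by keeping all dominating functions in the shape $C_m(1+|x|+\sup_{s\in[0,T]}|B_s|)^m$, whose $L^r(\Pbb)$-norms are bounded uniformly for $x$ in the compact set $K$, and by invoking the version of the $L^2$-convergence $X^{n,x}_t\to X^x_t$ that is uniform over $K$ (the constants in the proof of \Cref{convergenceApproximating} depending only on the linear growth constant and $K$); alternatively, since each $x\mapsto\Ebb[|\Ecal_n^x-\Ecal_0^x|^q]$ is continuous, pointwise convergence on the compact $K$ together with the equicontinuity furnished by the uniform moment bounds gives uniform convergence. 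Combining the two preceding paragraphs then proves the lemma.
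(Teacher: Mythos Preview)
Your reduction to $\sup_{x\in K}\Ebb[|A_n^x|^{2p}]^{1/(2p)}\to 0$ and the splitting of $\Delta_n^x(t)$ into the Lipschitz-in-law piece and the $b_n-b$ piece are exactly right, and the pointwise-in-$x$ argument is fine. The gap is the passage to uniformity in $x\in K$. Neither of your two suggested routes is substantiated: \Cref{convergenceApproximating} is a subsequential compactness argument for a \emph{fixed} $x$ and contains no estimate that would yield $\sup_{x\in K}\Ebb[|X_t^{n,x}-X_t^x|]\to 0$; and ``equicontinuity furnished by the uniform moment bounds'' is not a valid inference---uniform moment bounds give boundedness, not a common modulus of continuity in $x$ for the family $\{x\mapsto \Ebb[|\Ecal_n^x-\Ecal_0^x|^q]\}_{n\ge 1}$.

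The paper closes this gap by a self-referential Gr\"onwall trick that avoids any appeal to pre-established convergence of $X^{n,x}$. Writing $A_n(t,x):=\Ebb[|\Ecal_n^x(t)-\Ecal_0^x(t)|^q]^{1/q}$ (with the exponentials taken over $[0,t]$), one uses Girsanov to represent $\Ebb[|X_t^{n,x}-X_t^x|]=\Ebb\big[|B_t|\,|\Ecal_n^x(t)-\Ecal_0^x(t)|\big]\lesssim A_n(t,x)$, so that the Lipschitz-in-law bracket feeds back into the very quantity being estimated. This yields $A_n(T,x)^2\lesssim \int_0^T A_n(t,x)^2\,dt + \int_0^T D_n(t,x)^2\,dt$ with $D_n(t,x):=\Ebb[|b_n(t,B_t^x,\Pbb_{X_t^x})-b(t,B_t^x,\Pbb_{X_t^x})|^{2p}]^{1/(2p)}$, and Gr\"onwall gives $A_n(T,x)^2\le C\int_0^T D_n(t,x)^2\,dt$ with $C$ independent of $x\in K$. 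The remaining uniformity $\sup_{x\in K}\int_0^T D_n^2(t,x)\,dt\to 0$ is obtained by writing out the Gaussian density of $B_t^x$, using the bound $e^{-(y-x)^2/(2t)}\le e^{x^2/(2t)}e^{-y^2/(4t)}$ to separate the $x$-dependence, and exploiting compactness of $\{\Pbb_{X_t^x}:x\in K\}$ together with $\sup_n\|\tilde b_n\|_\infty<\infty$. Your dominated-convergence sketch for the second bracket, as stated, does not produce this uniformity either.
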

\begin{proof}
	For the sake of readability we use the abbreviation $\bfr_n(X_t^{k,x}) = b_n(t,B_t^x,\Pbb_{X_t^{k,x}})$ for $n,k \geq 0$. First using inequality \eqref{exponentialInequality}, \Cref{boundsSolution} and Burkholder-Davis-Gundy's inequality yields
	\begin{align*}
		A_n&(T,x):= \Ebb \left[ \left| \Ecal \left( \int_0^T \bfr_n(X_t^{n,x}) dB_t \right) - \Ecal \left( \int_0^T \bfr(X_t^x) dB_t \right) \right|^q \right]^{\frac{1}{q}} \\
		&\leq \Ebb\left[ \left| \int_0^T \bfr_n(X_t^{n,x}) - \bfr(X_t^x) dB_t + \frac{1}{2} \int_0^T \bfr_n(X_t^{n,x})^2 - \bfr(X_t^x)^2 dt \right|^q \right.\\
		&\quad \left.\left( \Ecal \left( \int_0^T \bfr_n(X_t^{n,x}) dB_t \right) + \Ecal \left( \int_0^T \bfr(X_t^x) dB_t \right) \right)^q \right]^{\frac{1}{q}} \\
		&\lesssim \Ebb\left[ \left| \int_0^T \left( \bfr_n(X_t^{n,x}) - \bfr(X_t^x)\right)^2 dt \right|^{\frac{p}{2}} \right]^{\frac{1}{p}}  + \Ebb\left[ \left| \int_0^T \bfr_n(X_t^{n,x})^2 - \bfr(X_t^x)^2 dt \right|^p \right]^{\frac{1}{p}},
	\end{align*}
	where $p:= \frac{1+\varepsilon}{\varepsilon}$. Due to its definition $b_n$ is of linear growth uniformly in $n\geq 0$ and thus we get with \Cref{boundsSolution} that
	\begin{align*}
		\Ebb&\left[ \left| \bfr_n(X_t^{n,x})^2 - \bfr(X_t^x)^2 \right|^p \right]^{\frac{1}{p}} \lesssim \Ebb\left[ \left| \bfr_n(X_t^{n,x}) - \bfr(X_t^x) \right|^{2p} \right]^{\frac{1}{2p}}
	\end{align*}
	and by Minkowski's integral as well as Cauchy-Schwarz' inequality, we have
	\begin{align*}
		A_n&(T,x)\\
		&\lesssim \left( \int_0^T \Ebb\left[ \left| \bfr_n(X_t^{n,x}) - \bfr(X_t^x)\right|^{2p} \right]^{\frac{2}{2p}} dt \right)^{\frac{1}{2}} + \int_0^T \Ebb\left[ \left| \bfr_n(X_t^{n,x}) - \bfr(X_t^x) \right|^{2p} \right]^{\frac{1}{2p}} dt \\
		&\lesssim \left( \int_0^T \Ebb\left[ \left| \bfr_n(X_t^{n,x}) - \bfr(X_t^x)\right|^{2p} \right]^{\frac{2}{2p}} dt \right)^{\frac{1}{2}}.
	\end{align*}
	Using the triangle inequality and $(\mu \mapsto b(t,y,\mu)) \in \Lip_C(\Pcal_1(\Rbb))$ for every $t\in[0,T]$ and $y\in\Rbb$ yields
	\begin{align*}
		\Ebb&\left[ \left| \bfr_n(X_t^{n,x}) - \bfr(X_t^x) \right|^{2p} \right]^{\frac{1}{2p}} \\
		&\leq \Ebb\left[ \left| \bfr_n(X_t^{n,x}) - \bfr_n(X_t^x) \right|^{2p} \right]^{\frac{1}{2p}} + \Ebb\left[ \left| \bfr_n(X_t^x) - \bfr(X_t^x) \right|^{2p} \right]^{\frac{1}{2p}} \\
		&\leq C \Kcal\left( \Pbb_{X_t^{n,x}}, \Pbb_{X_t^x} \right) + D_n(t,x) \leq C \Ebb\left[ \left| X_t^{n,x} - X_t^x \right| \right] + D_n(t,x),
	\end{align*}
	where $D_n(t,x) := \Ebb\left[ \left| \bfr_n(X_t^x) - \bfr(X_t^x) \right|^{2p} \right]^{\frac{1}{2p}}$, $t\in[0,T]$. With Girsanov's Theorem and Jensen's inequality we get
	\begin{align*}
		\Ebb\left[\left|  X_t^{n,x} - X_t^x \right| \right] &= \Ebb \left[ |B_t^x| \left| \Ecal \left( \int_0^t \bfr_n(X_s^{n,x}) dB_s \right) - \Ecal \left( \int_0^t \bfr(X_s^x) dB_s \right) \right|\right]\\
		&\lesssim \Ebb \left[ \left| \Ecal \left( \int_0^t \bfr_n(X_s^{n,x}) dB_s \right) - \Ecal \left( \int_0^t \bfr(X_s^x) dB_s \right) \right|^q \right]^{\frac{1}{q}} = A_n(t,x).
	\end{align*}
	Consequently, $A_n(T,x) \lesssim \left( \int_0^T (A_n(t,x) + D_n(t,x))^2 dt \right)^{\frac{1}{2}}$ and therefore
	\begin{align*}
		A_n^2(T,x) \lesssim \int_0^T A_n^2(t,x) dt + \int_0^T D_n^2(t,x) dt.
	\end{align*}
	Hence, we get with Grönwall's inequality
	\begin{align*}
		A_n^2(T,x) \leq C \int_0^T D_n^2(t,x) dt,
	\end{align*}
	for some constants $C >0$ independent of $x\in K$, $n\geq 0$ and $t\in[0,T]$ and as a consequence it suffices to show
	\begin{align}\label{Dconvergence}
		\sup_{x\in K} \int_0^T D_n^2(t,x) dt \xrightarrow[n\to\infty]{} 0.
	\end{align}
	Note first
	\begin{align*}
		D_n^2(t,x) &= \Ebb\left[ \left| b_n\left(t,B_t^x,\Pbb_{X_t^x}\right) - b\left(t,B_t^x,\Pbb_{X_t^x}\right) \right|^{2p} \right]^{\frac{2}{2p}} \\
		&= \left( \int_{\Rbb} \left| b_n\left(t,y,\Pbb_{X_t^x}\right) - b\left(t,y,\Pbb_{X_t^x}\right) \right|^{2p} \frac{1}{\sqrt{2\pi t}} e^{-\frac{(y-x)^2}{2t}} dy \right)^{\frac{2}{2p}} \\
		&\leq e^{\frac{x^2}{2pt}} \left( \int_{\Rbb} \left| b_n\left(t,y,\Pbb_{X_t^x}\right) - b\left(t,y,\Pbb_{X_t^x}\right) \right|^{2p} \frac{1}{\sqrt{2\pi t}} e^{-\frac{y^2}{4t}} dy\right)^{\frac{2}{2p}},
	\end{align*}
	where we have used $e^{-\frac{(y-x)^2}{2t}} = e^{-\frac{y^2}{4t}} e^{-\frac{(y-2x)^2}{4t}} e^{\frac{x^2}{2t}} \leq e^{-\frac{y^2}{4t}} e^{\frac{x^2}{2t}}$.  Furthermore, by \eqref{measureLipschitz} and \Cref{convergenceApproximating}, $\Pbb_{X_t^{\cdot}}$ is continuous for all $t\in[0,T]$ and thus, $\Pbb_{X_t^K} := \lbrace \Pbb_{X_t^x}: x\in K \rbrace \subset \Rbb$ is compact. Therefore due to the definition of the approximating sequence
	\begin{align*}
		\sup_{x\in K} \left| b_n\left(t,y,\Pbb_{X_t^x}\right) - b\left(t,y,\Pbb_{X_t^x}\right) \right| = \sup_{z \in \Pbb_{X_t^K}} \left| b_n(t,y,z) - b(t,y,z) \right| \xrightarrow[n\to\infty]{} 0,
	\end{align*}
	and hence $D_n^2(t,x)$ converges to $0$ uniformly in $x\in K$. Consequently, $\int_0^T D_n^2(t,x) dt$ converges uniformly to $0$ by \Cref{boundsSolution} and dominated convergence, which proves the result.
\end{proof}

\begin{lemma}\label{uniformConvergenceLocalTime}
	Suppose the drift coefficient $b:[0,T]\times \Rbb \times \Pcal_1(\Rbb) \to \Rbb$ is in the decomposable form \eqref{formDrift} and there exists a constant $C>0$ such that $(\mu \mapsto b(t,y,\mu)) \in \Lip_C(\Pcal_1(\Rbb))$ for every $t\in[0,T]$ and $y\in\Rbb$. Let $(X_t^x)_{t\in[0,T]}$ be the unique strong solution of \eqref{mainMcKeanVlasov}. Furthermore, $\lbrace b_n \rbrace_{n\geq 1}$ is the approximating sequence of $b$ as defined in \eqref{approximatingDrift} and $(X_t^{n,x})_{t\in[0,T]}$, $n\geq 1$, the corresponding unique strong solutions of \eqref{approximatingMFSDEExp}. Then for any compact subset $K \subset \Rbb$, $s,t\in[0,T]$, $s\leq t$ and $p\geq 1$,
	\begin{align*}
		\Ebb \left[ \left| \exp \left\lbrace - \int_s^t \int_{\Rbb} \bfr_n(u,y) L^{B^x}(du,dy) \right\rbrace - \exp \left\lbrace - \int_s^t \int_{\Rbb} \bfr(u,y) L^{B^x}(du,dy) \right\rbrace \right|^p \right]^{\frac{1}{p}},
	\end{align*}
	where $\bfr_n(u,y) := b_n\left(u,y,\Pbb_{X_u^{n,x}}\right)$ for all $n\geq 0$, converges uniformly in $x\in K$ to $0$ as $n$ goes to infinity.
\end{lemma}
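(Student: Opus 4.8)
The plan is to follow the scheme of the proof of \Cref{uniformConvergenceEcal}. Abbreviate, for $n\ge 0$, $\Lambda_n^{s,t,x} := \int_s^t\int_\Rbb \bfr_n(u,y)\,L^{B^x}(du,dy)$, so that $\bfr = \bfr_0$ (with $b_0 := b$, $X^{0,x}=X^x$) and the quantity to be estimated is $\Ebb\big[\,|e^{-\Lambda_n^{s,t,x}} - e^{-\Lambda_0^{s,t,x}}|^p\,\big]^{1/p}$. First, by inequality \eqref{exponentialInequality},
\begin{align*}
	\left| e^{-\Lambda_n^{s,t,x}} - e^{-\Lambda_0^{s,t,x}} \right| \le \left| \Lambda_n^{s,t,x} - \Lambda_0^{s,t,x} \right| \left( e^{-\Lambda_n^{s,t,x}} + e^{-\Lambda_0^{s,t,x}} \right),
\end{align*}
and applying Hölder's inequality (writing the $L^p$-norm as a product of two $L^{2p}$-factors) together with \Cref{boundLocalTime}, which bounds $\Ebb\big[(e^{-\Lambda_n^{s,t,x}}+e^{-\Lambda_0^{s,t,x}})^{2p}\big]^{1/2p}$ by a constant independent of $n\ge 0$, $s,t\in[0,T]$ and $x\in K$, the claim reduces to showing
\begin{align*}
	\sup_{x\in K}\, \Ebb\left[ \left| \Lambda_n^{s,t,x} - \Lambda_0^{s,t,x} \right|^{2p} \right]^{\frac{1}{2p}} \xrightarrow[n\to\infty]{} 0 .
\end{align*}

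Next, I invoke the $L^{2p}(\Omega)$-estimates for local time--space integrals from \cite{Eisenbaum_LocalTimeIntegral} and \cite{MeyerBrandisBanosDuedahlProske_ComputingDeltas} (the same ones underlying \Cref{boundLocalTime}) to bound $\Ebb\big[|\Lambda_n^{s,t,x}-\Lambda_0^{s,t,x}|^{2p}\big]^{1/2p}$, which is the $L^{2p}(\Omega)$-norm of the local time--space integral of $\bfr_n-\bfr$, by a Gaussian-weighted norm of its integrand, i.e.~by (a power of) $\int_s^t\int_\Rbb |\bfr_n(u,y)-\bfr(u,y)|^2\,\tfrac{1}{\sqrt{2\pi u}}e^{-(y-x)^2/(2u)}\,dy\,du$ plus higher-order terms of the same type. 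Now I decompose, exactly as the term $D_n$ in the proof of \Cref{uniformConvergenceEcal},
\begin{align*}
	\bfr_n(u,y)-\bfr(u,y) = \left[ b_n\left(u,y,\Pbb_{X_u^{n,x}}\right) - b_n\left(u,y,\Pbb_{X_u^x}\right) \right] + \left[ b_n\left(u,y,\Pbb_{X_u^x}\right) - b\left(u,y,\Pbb_{X_u^x}\right) \right] .
\end{align*}
The first bracket is bounded, by uniform Lipschitz continuity of $\mu\mapsto b_n(u,y,\mu)$, by $C\,\Kcal(\Pbb_{X_u^{n,x}},\Pbb_{X_u^x}) \le C\,\Ebb[|X_u^{n,x}-X_u^x|]$, which tends to $0$ uniformly in $x\in K$ and $u\in[0,T]$ by the proof of \Cref{uniformConvergenceEcal} (where $\Ebb[|X_u^{n,x}-X_u^x|]\lesssim A_n(u,x)$ and $\sup_{x\in K}\sup_{u\in[0,T]}A_n(u,x)\to 0$); the second bracket equals $(\btilde_n-\btilde)(u,y,\Pbb_{X_u^x})$ and is dominated by $\sup_{z\in\Pbb_{X_u^K}}|\btilde_n(u,y,z)-\btilde(u,y,z)|$, which converges to $0$ for a.e.~$(u,y)$ since $b_n\to b$ a.e.~and $\Pbb_{X_\cdot^K} = \{\Pbb_{X_\cdot^x}:x\in K\}$ is compact by \eqref{measureLipschitz} and \Cref{convergenceApproximating}.

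Finally, since $b_n$ and $b$ are of linear growth uniformly in $n$, the integrand $|\bfr_n-\bfr|^2$ is dominated by $C(1+|y|)^2$, and $\int_s^t\int_\Rbb (1+|y|)^2\,\tfrac{1}{\sqrt{2\pi u}}e^{-(y-x)^2/(2u)}\,dy\,du$ is finite and bounded uniformly over $x\in K$; hence dominated convergence shows the Gaussian-weighted norm above tends to $0$ uniformly in $x\in K$, which finishes the proof. I expect the only genuinely delicate point to be the invocation in the second step of the precise $L^{2p}(\Omega)$-bound for the local time--space integral by a Gaussian-weighted norm of its integrand that is simultaneously finite and uniformly controlled for $x$ in a compact set and that vanishes under the pointwise convergence just established; the remaining steps parallel the proofs of \Cref{uniformConvergenceEcal} and \Cref{boundLocalTime}.
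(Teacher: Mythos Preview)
Your first reduction step---applying inequality \eqref{exponentialInequality}, H\"older, and \Cref{boundLocalTime} to reduce to $\sup_{x\in K}\Ebb[|\Lambda_n^{s,t,x}-\Lambda_0^{s,t,x}|^{2p}]^{1/2p}\to 0$---is exactly what the paper does. The decomposition of $\bfr_n-\bfr$ into a Lipschitz part controlled by $\Kcal(\Pbb_{X_u^{n,x}},\Pbb_{X_u^x})$ and a part handled as the term $D_n$ in \Cref{uniformConvergenceEcal} is also the same in spirit, and the paper indeed concludes by saying ``similar to the proof of \Cref{uniformConvergenceEcal} one obtains the result.''

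The difference is in how you pass from the local time--space integral to a Gaussian-weighted norm of the integrand. You invoke an abstract $L^{2p}(\Omega)$-bound ``from \cite{Eisenbaum_LocalTimeIntegral} and \cite{MeyerBrandisBanosDuedahlProske_ComputingDeltas}'' and yourself flag this as the delicate point; note that \Cref{boundLocalTime} concerns exponential moments, not $L^{2p}$-norms, so it does not directly supply the estimate you need. The paper instead makes this step explicit by applying \cite{MeyerBrandisBanosDuedahlProske_ComputingDeltas}[Theorem~2.10], which decomposes the local time--space integral as
\[
\int_s^t (\bfr_n-\bfr)(u,B_u^x)\,dB_u + \int_{T-t}^{T-s}(\bfr_n-\bfr)(T-u,\hat B_u^x)\,dW_u - \int_{T-t}^{T-s}(\bfr_n-\bfr)(T-u,\hat B_u^x)\,\tfrac{\hat B_u}{T-u}\,du,
\]
where $\hat B_u=B_{T-u}$ and $W$ is a Brownian motion in the reversed filtration. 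One then applies Burkholder--Davis--Gundy to the two stochastic integrals and Cauchy--Schwarz (pairing with $\|\hat B_u/(T-u)\|_{L^{4p}}$) to the Lebesgue integral, which produces precisely the Gaussian-weighted quantities you wrote down. So your argument is correct once this decomposition is inserted in place of the black-box $L^{2p}$-estimate; without it, the step you yourself identify as delicate remains unjustified.
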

\begin{proof}
	 We first use inequality \eqref{exponentialInequality} to obtain with \Cref{boundLocalTime}
	\begin{align*}
		\Ebb&\left[ \left| \exp \left\lbrace - \int_s^t \int_{\Rbb} \bfr_n(u,y) L^{B^x}(du,dy) \right\rbrace - \exp \left\lbrace - \int_s^t \int_{\Rbb} \bfr(u,y) L^{B^x}(du,dy) \right\rbrace \right|^p \right]^{\frac{1}{p}} \\
		& \leq \Ebb\left[ \left| \int_s^t \int_{\Rbb} \bfr_n(u,y) L^{B^x}(du,dy) - \int_s^t \int_{\Rbb} \bfr(u,y) L^{B^x}(du,dy)\right|^p \right. \\
		&\times \left.\left(\exp \left\lbrace - \int_s^t \int_{\Rbb} \bfr_n(u,y) L^{B^x}(du,dy) \right\rbrace + \exp \left\lbrace - \int_s^t \int_{\Rbb} \bfr(u,y) L^{B^x}(du,dy) \right\rbrace \right)^{p} \right]^{\frac{1}{p}} \\
		&\lesssim \Ebb\left[ \left| \int_s^t \int_{\Rbb} \bfr_n(u,y) L^{B^x}(du,dy) - \int_s^t \int_{\Rbb} \bfr(u,y) L^{B^x}(du,dy)\right|^{2p} \right]^{\frac{1}{2p}}.
	\end{align*}
	We define the time-reversed Brownian motion $\hat{B}_t:= B_{T-t}$, $t\in[0,T]$, and the Brownian motion $W_t$, $t\in [0,T]$, with respect to the natural filtration of $\hat{B}$. By \cite{MeyerBrandisBanosDuedahlProske_ComputingDeltas}[Theorem 2.10], Burkholder-Davis-Gundy's inequality and Cauchy-Schwarz' inequality
	\begin{align*}
		\Ebb&\left[ \left|\int_s^t \int_{\Rbb} \bfr_n(u,y) - \bfr(u,y) L^{B^x}(du,dy)\right|^{2p}\right]^{\frac{1}{2p}} \\
		&= \Ebb\left[ \left|\int_s^t \bfr_n(u,B_u^x) - \bfr(u,B_u^x) dB_u + \int_{T-t}^{T-s} \bfr_n(T-u,\hat{B}_u^x) - \bfr(T-u,\hat{B}_u^x) dW_u\right.\right. \\
		&\quad \left.\left.- \int_{T-t}^{T-s} \left( \bfr_n(T-u,\hat{B}_u^x) - \bfr(T-u,\hat{B}_u^x) \right) \frac{\hat{B}_u}{T-u} du\right|^{2p}\right]^{\frac{1}{2p}} \\
		&\lesssim \Ebb\left[ \left(\int_s^t \left( \bfr_n(u,B_u^x) - \bfr(u,B_u^x) \right)^2 du\right)^{p}\right]^{\frac{1}{2p}} \\
		&\quad + \Ebb\left[ \left( \int_{T-t}^{T-s} \left( \bfr_n(T-u,\hat{B}_u^x) - \bfr(T-u,\hat{B}_u^x) \right)^2 du\right)^{p}\right]^{\frac{1}{2p}} \\
		&\quad + \int_{T-t}^{T-s} \left\Vert \bfr_n(T-u,\hat{B}_u^x) - \bfr(T-u,\hat{B}_u^x) \right\Vert_{L^{4p}(\Omega)} \left\Vert \frac{\hat{B}_u}{T-u}\right\Vert_{L^{4p}(\Omega)} du.
	\end{align*}
	Similar to the proof of \Cref{uniformConvergenceEcal} one obtains the result.
\end{proof}

\section{Hida spaces}
 In order to prove \Cref{convergenceApproximating}, we need the definition of the Hida test function and distribution space (cf. \cite{ProskeDiNunnoOksendal_MalliavinCalculus}[Definition 5.6]). Furthermore we state the central theorem used in the proof of \Cref{convergenceApproximating}, followed by a further helpful criterion for relative compactness using modulus of continuity.

\begin{definition}\label{HidaSpaces}
Let $\mathcal{I}$ be the set of all finite multi-indices and $\lbrace
H_{\alpha} \rbrace_{\alpha \in \mathcal{I}}$ be an orthogonal basis of the Hilbert space $L^2(\Omega)$ defined by
\begin{align*}
	H_{\alpha}(\omega) := \prod_{j=1}^m h_{\alpha_j}\left( \int_{\Rbb} e_j(t)dW_t (\omega)\right),
\end{align*}
where $h_n$ is the $n$-th hermitian polynomial, $e_n$ the $n$-th hermitian function and $W$ a standard Brownian motion. Furthermore, we define for every $\alpha =(\alpha_1,\dots \alpha_m)\in \mathcal{I}$,
\begin{align*}
	(2\Nbb)^{\alpha} := \prod_{j=1}^m (2j)^{\alpha_j}.
\end{align*}
\begin{enumerate}[(i)]
\item We define the Hida test function Space $\Scal$ as
\begin{align*}
	\Scal := \left\lbrace \phi= \sum_{\alpha \in \mathcal{I}} a_{\alpha} H_{\alpha} \in L^2(\Omega) : \Vert \phi \Vert_k < \infty \text{ } \forall k\in \Rbb \right\rbrace,
\end{align*}
where the norm $\Vert \cdot \Vert_k$ is defined by
\begin{align*}
	\Vert \phi \Vert_k := \sqrt{\sum_{\alpha \in \mathcal{I}} \alpha! a^2_{\alpha} (2\Nbb)^{\alpha k}}.
\end{align*}
Here, $\Scal$ is equipped with the projective topology.
\item The Hida distribution space $\Scal^*$ is defined by
\begin{align*}
	\Scal^* := \left\lbrace \phi= \sum_{\alpha \in \mathcal{I}} a_{\alpha} H_{\alpha} \in L^2(\Omega) : \exists k \in \Rbb \text{ s.t. } \Vert \phi \Vert_{-k} < \infty \right\rbrace,
\end{align*}
where the norm $\Vert \cdot \Vert_{-k}$ is defined by
\begin{align*}
	\Vert \phi \Vert_{-k} := \sqrt{\sum_{\alpha \in \mathcal{I}} \alpha! a^2_{\alpha} (2\Nbb)^{-\alpha k}}.
\end{align*}
Here, $\Scal^*$ is equipped with the inductive topology.
\end{enumerate}
\end{definition}

\begin{theorem}[Mitoma]\label{mitoma}
	The following statements are equivalent:
	\begin{enumerate}[(i)]
	\item $\Acal$ is relatively compact in $\Ccal([0,T]; \Scal^*)$,
	\item For any $\phi \in \Scal$, $\lbrace f(\cdot)[\phi]: f\in \Acal \rbrace$ is relatively compact in $\Ccal([0,T];\Rbb)$.
	\end{enumerate}
\end{theorem}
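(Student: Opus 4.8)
The plan is to establish the two implications separately. The forward implication $(i)\Rightarrow(ii)$ is soft: for a fixed $\phi\in\Scal$ the evaluation map $\Ccal([0,T];\Scal^*)\to\Ccal([0,T];\Rbb)$ given by $f\mapsto f(\cdot)[\phi]$ is linear and continuous for the respective topologies of uniform convergence, because $\xi\mapsto\xi[\phi]$ is continuous on $\Scal^*$; a continuous image of a relatively compact set is relatively compact, which is exactly $(ii)$. The real content is $(ii)\Rightarrow(i)$, and the strategy there is to descend into a single Hilbert space in the Gelfand chain $\Scal\subset\cdots\subset\Scal_{-k}\subset\cdots\subset\Scal^*=\bigcup_k\Scal_{-k}$, where $\Scal_{-k}$ denotes the completion of $\Scal$ under $\Vert\cdot\Vert_{-k}$, and to apply the classical Arzel\`a--Ascoli theorem there.

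The first step is a uniform localisation of $\Acal$ in one such Hilbert space. A relatively compact subset of $\Ccal([0,T];\Rbb)$ is bounded in the sup-norm, so $(ii)$ yields $\sup_{f\in\Acal}\sup_{t\in[0,T]}|f(t)[\phi]|<\infty$ for every $\phi\in\Scal$; hence $\{f(t):f\in\Acal,\ t\in[0,T]\}$ is weakly bounded in $\Scal^*$. Since $\Scal$ is a nuclear Fr\'echet space it is barrelled, and weakly bounded subsets of its strong dual are equicontinuous, hence bounded in some $\Scal_{-k_0}$: there are $k_0\in\Nbb$ and $M>0$ with $\sup_{f\in\Acal}\sup_{t\in[0,T]}\Vert f(t)\Vert_{-k_0}\le M$. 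Taking $k_1:=k_0+2$, the inclusion $\Scal_{-k_0}\hookrightarrow\Scal_{-k_1}$ has Hilbert--Schmidt norm equal to $\sum_{\alpha\in\mathcal I}(2\Nbb)^{-2\alpha}<\infty$ (this finiteness being the concrete form of nuclearity of the Hida spaces), so it is compact, and the closed $M$-ball of $\Scal_{-k_0}$ has relatively compact image in $\Scal_{-k_1}$; in particular all values $f(t)$, $f\in\Acal$, $t\in[0,T]$, lie in one fixed compact subset of $\Scal_{-k_1}$.

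The second step upgrades the scalarwise equicontinuity supplied by $(ii)$ to equicontinuity in the norm $\Vert\cdot\Vert_{-k_1}$. For $g=\sum_\alpha a_\alpha H_\alpha$ one has $\Vert g\Vert_{-k_1}^2=\sum_{\alpha\in\mathcal I}(2\Nbb)^{-2\alpha}c_\alpha$ with $c_\alpha:=\alpha!\,a_\alpha^2(2\Nbb)^{-\alpha k_0}$ and $\sum_\alpha c_\alpha=\Vert g\Vert_{-k_0}^2$. Applied to $g=f(t)-f(s)$ this gives $c_\alpha\le 4M^2$ for all $\alpha$, so the tail $\sum_{\alpha\notin F}(2\Nbb)^{-2\alpha}c_\alpha\le 4M^2\sum_{\alpha\notin F}(2\Nbb)^{-2\alpha}$ is made $<\varepsilon^2/2$ uniformly in $f,s,t$ by choosing a finite $F\subset\mathcal I$; on $F$ one writes $\alpha!\,a_\alpha^2=\frac{1}{\alpha!}\bigl(f(t)[H_\alpha]-f(s)[H_\alpha]\bigr)^2$ (legitimate since $H_\alpha\in\Scal$) and invokes $(ii)$ for the finitely many test functions $H_\alpha$, $\alpha\in F$, to get equicontinuity of $t\mapsto f(t)[H_\alpha]$ uniformly over $f\in\Acal$. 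The minimum of the finitely many moduli of continuity then yields, for each $\varepsilon>0$, a $\delta>0$ with $\sup_{f\in\Acal}\Vert f(t)-f(s)\Vert_{-k_1}<\varepsilon$ whenever $|t-s|<\delta$; in particular each $f\in\Acal$ is itself continuous into $\Scal_{-k_1}$. Combining the two steps, $\Acal\subset\Ccal([0,T];\Scal_{-k_1})$ is equicontinuous and pointwise relatively compact into the Hilbert (hence complete metric) space $\Scal_{-k_1}$, so Arzel\`a--Ascoli makes it relatively compact in $\Ccal([0,T];\Scal_{-k_1})$; since $\Scal_{-k_1}\hookrightarrow\Scal^*$ is continuous, so is the induced map $\Ccal([0,T];\Scal_{-k_1})\to\Ccal([0,T];\Scal^*)$, and therefore $\Acal$ is relatively compact in $\Ccal([0,T];\Scal^*)$.

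I expect the main obstacles to be exactly these two passages from scalar to Hilbert-space information: obtaining the uniform $\Vert\cdot\Vert_{-k_0}$-bound on $\Acal$ out of the mere pointwise boundedness furnished by $(ii)$ --- which forces one to use that $\Scal$ is barrelled and that $\Scal^*=\bigcup_k\Scal_{-k}$ with compact linking maps --- and promoting the scalarwise equicontinuity to $\Vert\cdot\Vert_{-k_1}$-equicontinuity via the finite-plus-tail decomposition. Both rest on the nuclearity of the Hida structure, concretely on the summability $\sum_{\alpha\in\mathcal I}(2\Nbb)^{-\alpha p}<\infty$ for $p>1$; the remaining ingredients (the Arzel\`a--Ascoli argument and the transfer along the continuous inclusion $\Scal_{-k_1}\hookrightarrow\Scal^*$) are routine.
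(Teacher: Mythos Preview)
The paper does not prove this statement at all: its ``proof'' consists solely of a citation to \cite{KallianpurXiong_SDEsInInfiniteDimensionalSpaces}, Theorem 2.4.4. Your proposal, by contrast, supplies an actual argument, and it is essentially the standard proof of Mitoma's theorem (and indeed the one found in Kallianpur--Xiong and in Mitoma's original paper): exploit nuclearity of $\Scal$ to localise the family in a single Hilbert space $\Scal_{-k_0}$ of the countably Hilbertian chain, then use a compact (Hilbert--Schmidt) inclusion $\Scal_{-k_0}\hookrightarrow\Scal_{-k_1}$ to convert the scalarwise information into norm equicontinuity and pointwise relative compactness, and conclude via Arzel\`a--Ascoli. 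Your sketch is correct in outline and in the key details; the computation $\sum_{\alpha}(2\Nbb)^{-2\alpha}=\prod_{j\ge1}(1-(2j)^{-2})^{-1}<\infty$ is the concrete nuclearity estimate, and the recovery of the chaos coefficients via $a_\alpha=\tfrac{1}{\alpha!}g[H_\alpha]$ is the right way to pass from the scalar equicontinuity in $(ii)$ to equicontinuity on the finite part. The only minor point to tidy is that you implicitly use that relatively compact subsets of $\Ccal([0,T];\Rbb)$ are equicontinuous---this is the direct half of Arzel\`a--Ascoli and is uncontroversial, but worth stating since it is exactly where the uniformity over $f\in\Acal$ enters.
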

\begin{proof}
	\cite{KallianpurXiong_SDEsInInfiniteDimensionalSpaces}[Theorem 2.4.4]
\end{proof}

\begin{lemma}\label{relativeCompactModulus}
	Let $(X,\Vert \cdot \Vert)$ be a Banach space. Then $\Acal \subset \Ccal([0,T],X)$ is relatively compact if and only if 
	\begin{align*}
		\sup_{f \in \Acal} \sup \lbrace \Vert f(t)-f(s) \Vert: s,t \in [0,T], |t-s|<\delta \rbrace \rightarrow 0,
	\end{align*}
	as $\delta \to 0$.
\end{lemma}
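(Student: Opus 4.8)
The plan is to read this as a form of the Arzel\`a--Ascoli theorem: the displayed quantity is precisely the statement that $\Acal$ is uniformly equicontinuous on the compact interval $[0,T]$, so what has to be shown is that, for subsets of $\Ccal([0,T];X)$, uniform equicontinuity is equivalent to relative compactness. I would prove the two implications separately, using only the sup-norm structure of $\Ccal([0,T];X)$ and a standard diagonal-sequence argument. For the necessity direction (``relatively compact $\Rightarrow$ uniform modulus vanishes''), I would use that a relatively compact set is totally bounded: given $\varepsilon>0$, cover $\Acal$ by finitely many $\varepsilon/3$-balls centred at functions $g_1,\dots,g_N$; each $g_i$ is uniformly continuous on the compact set $[0,T]$, which yields a common $\delta>0$, and a triangle-inequality estimate $\Vert f(t)-f(s)\Vert\le\Vert f(t)-g_i(t)\Vert+\Vert g_i(t)-g_i(s)\Vert+\Vert g_i(s)-f(s)\Vert$ then transfers the modulus bound from the finitely many $g_i$ to all $f\in\Acal$. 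This direction is routine.

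For the sufficiency direction I would fix an arbitrary sequence $(f_n)_{n\ge1}\subset\Acal$ and show it has a subsequence converging in $\Ccal([0,T];X)$. First I would choose a countable dense subset $\{t_j\}_{j\ge1}\subset[0,T]$ and, by a diagonal argument, pass to a subsequence $(f_{n_k})_k$ along which $f_{n_k}(t_j)$ converges in $X$ for every $j$. Next I would invoke the uniform equicontinuity to cover $[0,T]$ by finitely many $\delta$-neighbourhoods of points $t_j$, and combine the (finitely many) pointwise convergences with the modulus bound to obtain a uniform Cauchy estimate $\sup_{t}\Vert f_{n_k}(t)-f_{n_l}(t)\Vert<\varepsilon$ for $k,l$ large. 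Completeness of $\Ccal([0,T];X)$ then yields a uniform limit, proving relative compactness.

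The one genuinely non-formal point -- and hence the main obstacle -- is that the diagonal extraction in the sufficiency direction needs $\{f(t):f\in\Acal\}$ to be relatively compact in $X$ for each fixed $t$, which is \emph{not} implied by the modulus-of-continuity hypothesis alone once $\dim X=\infty$ (constant functions whose values form a bounded non-compact set are a counterexample). In every application of this lemma in the paper one has $X=\Rbb$, so pointwise relative compactness is just pointwise boundedness, and moreover all members of $\Acal$ agree at $t=0$, so the modulus bound forces a uniform sup-bound over $\Acal$; thus the missing ingredient is automatic there. I would therefore either record pointwise relative compactness of $\{f(t):f\in\Acal\}$ as an additional standing hypothesis, or, following the applications, note that it follows from boundedness of $\Acal$ together with the finite-dimensionality of $X$.
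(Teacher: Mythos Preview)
Your analysis is correct, and in fact more careful than the paper's own treatment: the paper does not prove this lemma at all but simply cites \cite{KallianpurXiong_SDEsInInfiniteDimensionalSpaces}[Theorem 2.4.3]. So there is no ``paper's approach'' to compare against beyond the reference.

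More importantly, you have correctly spotted that the lemma \emph{as stated} is false for a general Banach space $X$: uniform equicontinuity alone cannot force relative compactness in $\Ccal([0,T];X)$ without some pointwise relative compactness assumption (your counterexample of constant functions with values in a bounded non-compact subset of $X$ is exactly right). Even for $X=\Rbb$ one needs pointwise boundedness, which is not part of the hypothesis either. Your observation that in the paper's only application (the proof of \Cref{convergenceApproximating}) one has $X=\Rbb$ and all functions agree at $t=0$, so that the equicontinuity estimate $\vert X_t^{n,x}[\phi]-X_s^{n,x}[\phi]\vert\le C\vert t-s\vert$ automatically yields a uniform sup-bound, is precisely the right way to repair the argument in context. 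The cited result in Kallianpur--Xiong presumably carries the appropriate additional hypothesis; the paper has simply transcribed the statement imprecisely.
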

\begin{proof}
	\cite{KallianpurXiong_SDEsInInfiniteDimensionalSpaces}[Theorem 2.4.3]
\end{proof}

\bibliography{literature}
\bibliographystyle{abbrv}

\end{document}